\newtheorem*{theorem*}{Theorem}
\newtheorem*{proposition*}{Proposition}
\newtheorem*{corollary*}{Corollary}
\theoremstyle{definition}
\newtheorem*{definition*}{Definition}
\declaretheorem[style=definition,qed=$\dashv$,numberwithin=section]{definition}
\declaretheorem[style=definition,qed=,sibling=definition]{remark}
\declaretheorem[style=definition,qed=$\dashv$, sibling = definition]{lemma-definition}
\declaretheorem[style=theorem, sibling = definition]{theorem}
\declaretheorem[style=theorem, sibling = definition]{lemma}
\declaretheorem[style=theorem, sibling = definition]{proposition}
\declaretheorem[style=theorem, sibling = definition]{corollary}
\declaretheorem[style=definition, sibling = definition]{example}
\newcommand{\cal}{\mathcal}
\newcommand{\N}{\mathbb N}
\newcommand{\F}{\mathbb F}
\newcommand{\Fp}{\F_p}
\newcommand{\OO}{\mathcal{O}}
\newcommand{\res}{\mathrm{res}}
\newcommand{\Res}{\mathrm{Res}}
\newcommand{\Span}{\textup{Span}}
\newcommand{\sn}{\par\smallskip\noindent}
\title{On valuation independence and defectless extensions of valued fields}
\author[A. Blaszczok ]{Anna Blaszczok}
\address{\hskip-\parindent
Anna Blaszczok\\
Institute of Mathematics\\
University of Silesia\\ 
Bankowa 14, 40-007 Katowice, Poland.}
\email {anna.blaszczok@us.edu.pl
}
\author[P. Cubides Kovascics]{Pablo Cubides Kovacsics}
\address{\hskip-\parindent
Pablo Cubides Kovacsics \\ Laboratoire de math\'ematiques Nicolas Oresme\\ Universit\'e de Caen\\CNRS UMR 6139 
Universit\'e de Caen BP 5186\\
14032 Caen cedex, France. }
\email{pablo.cubides@unicaen.fr}
\author[F.-V. Kuhlmann]{Franz-Viktor Kuhlmann
}
\address{\hskip-\parindent
Franz-Viktor Kuhlmann\\
Institute of Mathematics\\ 
University of Szczecin
\\
ul. Wielkopolska 15, 70-451 Szczecin, Poland.\\
}
\email{fvk@usz.edu.pl} 
\begin{document}

\begin{abstract}
In this article we further develop the theory of valuation independence and study its relation with classical notions in valuation theory such as immediate and defectless extensions. We use this general theory to settle two open questions regarding vector space defectless extensions of valued fields. Additionally, we provide a characterization of such extensions within various classes of valued fields, extending results of Fran\c{c}oise Delon.  
\end{abstract}
\keywords{Valuation independence, defect, defectless extension, vector-space defectless, immediate extension. \textit{MSC2010}: Primary 12L20, 12J10 and 13A18, secondary 03C98 and 12L12. 
}

\maketitle


\newcounter{eqn}
\normalem 

Valuation independence is a natural relation which strengthens linear independence in the framework of valued fields and valued vector spaces. Its definition appears in many different contexts of valuation theory and can be traced back to work of Robert \cite{robert1967}, which was based on work by Cohen and Monna \cite{cohen1948, monna1956, monna1957}. 

In this article we further develop the theory of valuation independence for general valued fields in the sense of Krull (\textit{i.e.}, of arbitrary rank) and study its relation with various classical notions in valuation theory such as immediate extensions and defectless extensions, among others. We use this general theory to settle two open questions regarding \emph{vector space defectless} (hereafter $vs$-defectless) extensions of valued fields, a type of extension introduced --in its most general form-- by Baur \cite{B} (under the name ``separated extension'') and further studied by Delon in \cite{D}. 

In the following section we present the main concepts and results of the article. 

\section{Main results}

Let $(K,v)$ be a valued field. We use the notation $vK$ for the value group, $\OO_K$ for the valuation ring, $Kv$ for the residue field and $\res$ for the residue map. By $(L|K,v)$ we denote an extension of valued fields: $L|K$ is a field extension, $v$ is a valuation on $L$ and $K$ is equipped with the restriction of $v$ to $K$. Every such extension induces canonical embeddings of $vK$ into $vL$ and of $Kv$ into $Lv$. Recall that if the canonical embeddings are onto, then the extension $(L|K,v)$ is called \emph{immediate}. In other words, $(L|K,v)$ is an immediate extension if the corresponding value group and residue field extensions are trivial. 

Throughout we will work over a valued field extension $(L|K,v)$ unless otherwise stated. Let $W\subseteq V$ be $K$-vector spaces with $V\subseteq L$. The valuation and the residue map induce respectively a totally ordered set $vV\subseteq vL$ and a $Kv$-vector subspace $Vv:=\res(\OO_V)$ of $Lv$ where $\OO_V:=\{a\in V\mid v(a)\geq 0\}$. We say that the $K$-vector space extension $W\subseteq V$ is \emph{finite} if $\dim_K V/W$ is finite. 

\begin{definition}\label{def:val_indep} A subset $B\subseteq V\setminus\{0\}$  is \emph{$(K,v)$-valuation independent over $W$} if for every finite $K$-linear combination $\sum_{i=1}^n c_ib_i$ of (pairwise distinct) elements $b_i\in B$ and every $a\in W$, we have that 
\[
v\left(\sum_{i=1}^n c_ib_i+a\right)=\min_{1\leqslant i\leqslant n}\{v(c_ib_i),v(a)\}.
\]
\end{definition} 

\begin{remark}\label{rmk:valInd} Note that if $B$ is $(K,v)$-valuation independent over $W$ then it is $K$-linearly independent over $W$. Indeed, for any $a\in W$ and a finite $K$-linear combination $b:=\Sigma_{i=1}^n c_ib_i$ with $b_i\in B$  and $c_i\in K$ such that $b+a=0$ we have that  
\[
\infty=v(0)=v\left(\sum_{i=1}^n c_ib_i+a\right)=\min_{1\leqslant i\leqslant n} \{v(c_ib_i), v(a)\},  
\]
which imposes that $a=0$ and all $c_i=0$. That the converse does not hold will be later shown as a special case of Lemma~\ref{lem:immediate_valdep}.  
\end{remark}

As usual, given a $(K,v)$-valuation independent set $B\subseteq V$, if $V=\Span_K(B)\oplus W$ we say that $B$ is a \emph{$(K,v)$-valuation basis of $V$ over $W$}. The set $B$ is $(K,v)$-valuation independent (resp. a $(K,v)$-valuation basis of $V$) if it is $(K,v)$-valuation independent over $W=\{0\}$ (resp. a $(K,v)$-valuation basis of $V$ over $\{0\}$). It is called \emph{$(K,v)$-valuation dependent over $W$} if it is not $(K,v)$-valuation independent over $W$. When the valued field $(K,v)$ in consideration is clear from the context, we will often omit $(K,v)$ and simply say $K$-valuation independent, $K$-valuation basis, etc. 

In Section \ref{sec:valind}, the general theory of the notion of valuation independence is developed. Some of the results hold in a slightly more general context which is presented in the Appendix. A dagger sign $(\dagger)$ will be added in front of those results which hold in this broader setting. In those cases, the same proofs work with minor modifications.   

Various results in this section can be found in the literature but, more often than not, whithin a less general setting (for example, they are proved only for rank 1 valued fields as in \cite{BGR}, or only for valued vector spaces where the scalar field $K$ is trivially valued as in \cite{fuchs1975, KS}). The benefit of gathering these results here is twofold. On the one hand, the common general framework we provide unifies results and terminology which radically change from author to author, making it easier to establish the subject's state of the art. On the other hand, all our proofs (in Sections \ref{sec:valind} and \ref{sec:defectext}) rely only on algebraic methods and basic knowledge of valuation theory, reducing the background needed to prove them. Some of our contributions in Section \ref{sec:valind} include the introduction of the notion of \emph{normalized valuation independent set} (see Subsection~\ref{subsec:normal}) and the following characterization of immediate extensions: 
  
\begin{proposition*}[Later Proposition \ref{pro:char-immediate}]
Take a valued field extension $(M|K,v)$. Then the following are equivalent:
\begin{enumerate}
\item $(M,v)$ is an immediate extension, 
\item for every subset $\{b_1,\ldots,b_n\}$ of some valued field extension of $(M,v)$, if $b_1,\ldots,b_n$ are $K$-valuation independent, then they are also $M$-valuation independent. 
\end{enumerate}
\end{proposition*}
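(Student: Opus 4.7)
My plan is to prove both implications via the contrapositive, using the standard approximation property of immediate extensions: if $(M|K,v)$ is immediate then for every $m\in M^{\times}$ there exists $c\in K^{\times}$ with $v(m-c)>v(m)$. (This is immediate from the definition: since $vm\in vM=vK$, pick $c_0\in K^{\times}$ with $vc_0=vm$; then $v(m/c_0)=0$ and $(m/c_0)v\in Mv=Kv$ is represented by some $c_1\in\OO_K$, giving $v(m-c_0c_1)>vm$.) I will take this lemma as known; it presumably appears earlier in Section~\ref{sec:valind}.

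For $(2)\Rightarrow(1)$, suppose $(M|K,v)$ is not immediate. If $vK\subsetneq vM$, pick $m\in M$ with $vm\notin vK$; if instead $Kv\subsetneq Mv$, pick $m\in M$ with $vm=0$ and $\res(m)\notin Kv$. In both cases I will check that the pair $\{1,m\}$ is $K$-valuation independent: for any $c_1,c_2\in K$ with $c_2\neq 0$, in the first case $v(c_2m)=vc_2+vm\notin vK$ differs from $vc_1$, while in the second case, after dividing by $c_1$ (when $vc_1=vc_2$), the residue computation $\res(1+(c_2/c_1)m)=1+\res(c_2/c_1)\res(m)$ lies outside $Kv$ and hence is nonzero. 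But $\{1,m\}$ is $M$-linearly dependent, hence $M$-valuation dependent by Remark~\ref{rmk:valInd}. Since $M$ itself is a valued field extension of $(M,v)$, this negates~(2).

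For $(1)\Rightarrow(2)$, assume $(M|K,v)$ is immediate and let $b_1,\dots,b_n$ be $K$-valuation independent in some extension. Suppose for contradiction that they are $M$-valuation dependent, so there exist $d_1,\dots,d_n\in M$, not all zero, with
\[
v\!\left(\sum_{i=1}^n d_ib_i\right)>\mu:=\min_i v(d_ib_i).
\]
Using the approximation lemma, for each $i$ with $d_i\neq 0$ I pick $c_i\in K^{\times}$ such that $v(d_i-c_i)>v(d_i)$; for $d_i=0$ I set $c_i=0$. Then $v(c_i)=v(d_i)$ so $v(c_ib_i)=v(d_ib_i)$, and $v((d_i-c_i)b_i)>v(d_ib_i)\geq \mu$. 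Hence
\[
v\!\left(\sum_i c_ib_i\right)\geq \min\!\Bigl\{v\!\bigl(\textstyle\sum_i d_ib_i\bigr),\,\min_i v((d_i-c_i)b_i)\Bigr\}>\mu=\min_i v(c_ib_i),
\]
contradicting the $K$-valuation independence of $b_1,\dots,b_n$ (the $c_i$ are not all zero because for indices with $d_i\neq 0$ we have $c_i\neq 0$, as otherwise $v(d_i)=v(d_i-0)>v(d_i)$).

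The main subtle point is the approximation lemma in the second direction and the careful bookkeeping that the chosen $c_i$ are nonzero exactly where $d_i$ is, so that the resulting $K$-combination really witnesses $K$-valuation dependence; both are routine once set up properly. The counterexamples in the first direction are elementary but require separating the value-group and residue-field cases.
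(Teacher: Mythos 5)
Your proof is correct, but both halves take a different route from the paper's. For $(1)\Rightarrow(2)$ the paper does not approximate the coefficients: it first replaces $b_1,\ldots,b_n$ by a \emph{normalized} $K$-valuation independent set $c_1b_1,\ldots,c_nb_n$ with $c_i\in K^\times$ (Lemma~\ref{lem:existence-normalized}), observes that since $vK=vM$ and $Kv=Mv$ the normalization conditions (N1)--(N4) are unchanged when read over $(M,v)$, and then invokes Lemma~\ref{almost_normalized} to get $M$-valuation independence. Your coefficient-approximation argument (replace $d_i\in M$ by $c_i\in K$ with $v(d_i-c_i)>v(d_i)$ and propagate the strict ultrametric inequality) is more elementary and self-contained --- it needs only the approximation property of immediate extensions, which is exactly the computation in the proof of Lemma~\ref{lem:immediate_valdep} --- whereas the paper's version leverages the normalization machinery it has already built and avoids touching the coefficients of the dependence relation at all. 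For $(2)\Rightarrow(1)$ the paper takes $a\in M\setminus K$ with $v(a-b)=\max v(a-K)$ (via Theorem~\ref{charact_immediate}) and uses Corollary~\ref{lem:1-ext-variant} to see that $\{a-b,1\}$ is $K$-valuation independent but $M$-linearly dependent; your direct case split on whether the value group or the residue field grows produces an equally valid witness $\{1,m\}$ by a bare-hands computation, at the cost of two cases instead of one uniform construction. All the delicate points in your write-up (nonvanishing of the chosen $c_i$, finiteness of $\mu$, the residue computation in the second case) are handled correctly.
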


Let us now recall the definition of $vs$-defectless extensions. 

\begin{definition}\label{def:vsdefect} The extension $(L|K,v)$ is called $vs$-defectless (vector-space defectless) if every finitely generated $K$-vector subspace of $L$ has a $K$-valuation basis. 
\end{definition}

\begin{remark} The previous definition is due to Baur \cite{B}, who originally called such extensions \emph{separated} extensions. A similar notion for rank 1 valued fields appears in \cite{BGR} under the name of \emph{weakly stable fields}. Unfortunately, both choices of terminology conflict with standard vocabulary from other areas of mathematics which have a strong connection to valuation theory (in particular, algebraic geometry and model theory). The term `$vs$-defectless' chosen in this article was coined during the eighties by Roquette's group in Heidelberg. Green, Matignon and Pop in \cite{GMP} introduced a vector space defect for a special sort of valued function fields which is trivial if and only if the  function field is a vs-defectless extension.
\end{remark}

Section \ref{sec:defectext} is devoted to the study of defectless and $vs$-defectless extensions of valued fields using the tools introduced in Section \ref{sec:valind}. In the first part of Section \ref{sec:defectext}, we provide the following characterization of defectless extensions (all terms to be later defined). 

\begin{proposition*}[Later Proposition \ref{char_defectless}]
Assume that the extension $(L|K,v)$ is finite. Then the following conditions are equivalent.
\begin{enumerate}
\item $[L:K]=(vL:vK)[Lv:Kv]$,
\item $(L|K,v)$ admits a standard $K$-valuation basis,
\item $(L|K,v)$ admits a $K$-valuation basis, 
\item $(L|K,v)$ is a $vs$-defectless extension.
\end{enumerate}
\end{proposition*}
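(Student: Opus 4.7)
The plan is to prove the cycle $(4)\Rightarrow(3)\Rightarrow(1)\Rightarrow(2)\Rightarrow(3)$ together with the separate implication $(1)\Rightarrow(4)$. The first four implications are formal or counting arguments; the last is where the real content lies.

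The implications $(4)\Rightarrow(3)$ and $(2)\Rightarrow(3)$ are immediate from the definitions: $L$ is itself a finite-dimensional $K$-subspace of $L$, and a standard valuation basis is in particular a valuation basis. For $(3)\Rightarrow(1)$, given a $K$-valuation basis $B$ of $L$ (so $|B|=[L:K]$), partition $B$ according to the $vK$-coset of $v(b)$, yielding at most $(vL:vK)$ parts. Within each part, rescaling elements by suitable scalars of $K$ to bring them to a common value produces elements of $\OO_L^\times$ whose residues in $Lv$ are $Kv$-linearly independent (a direct consequence of Definition~\ref{def:val_indep} applied to $B$). Hence each part has cardinality at most $[Lv:Kv]$, so $[L:K]\leq (vL:vK)[Lv:Kv]$, and combined with the standard fundamental inequality this yields $(1)$.

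For $(1)\Rightarrow(2)$, set $e:=(vL:vK)$ and $f:=[Lv:Kv]$. Choose $x_1,\ldots,x_e\in L^\times$ whose values represent the distinct cosets of $vK$ in $vL$, and $y_1,\ldots,y_f\in\OO_L$ whose residues form a $Kv$-basis of $Lv$. Standard valuation-independence criteria expected from Section~\ref{sec:valind}, combining the ``distinct $vK$-coset'' and ``$Kv$-linearly independent residue'' cases, imply that the $ef$ products $x_iy_j$ are $K$-valuation independent. Since $ef=[L:K]$ by $(1)$, they form a $K$-basis of $L$ of the required standard form.

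The main obstacle is $(1)\Rightarrow(4)$: given $[L:K]=ef$, every finite-dimensional $K$-subspace $V\subseteq L$ must admit a $K$-valuation basis. I would proceed by induction on $\dim V$, the base case $\dim V=1$ being trivial. For the inductive step, suppose $V'\subsetneq V$ has codimension one and admits a valuation basis $\{b_1,\ldots,b_{m-1}\}$; for any $v\in V\setminus V'$, the task is to find $w_0\in V'$ such that $\{b_1,\ldots,b_{m-1},v-w_0\}$ is a valuation basis of $V$. This reduces to showing that the set $\{v(v-w):w\in V'\}$ attains its supremum in $vL$. Establishing this attainment is the crux: by expressing $v$ and the candidates $w$ in the standard basis of $L$ supplied by $(2)$, the supremum question becomes a finite combinatorial problem indexed by the finitely many basis elements, and the equality $[L:K]=ef$ in $(1)$ precisely rules out the pseudo-Cauchy-type phenomena that would otherwise prevent attainment.
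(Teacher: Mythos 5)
Your global architecture is sound and four of your five implications are fine: $(4)\Rightarrow(3)$ and $(2)\Rightarrow(3)$ are indeed immediate, $(1)\Rightarrow(2)$ is exactly the paper's argument via Lemma~\ref{algvind_1}, and your counting proof of $(3)\Rightarrow(1)$ (partition a valuation basis by $vK$-cosets and bound each part by $[Lv:Kv]$ using the residue independence forced by valuation independence, cf.\ Lemma~\ref{lem:val_implies_N2}) is a legitimate shortcut past the paper's normalization machinery, which instead routes $(4)\to(1)$ through Corollary~\ref{valbasis_stand}. The only quibble there is cosmetic: rescaling by scalars of $K$ brings the elements of one part to a common value, not into $\OO_L^\times$; one then takes ratios against a fixed element of the part before applying residue independence.

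The genuine gap is in $(1)\Rightarrow(4)$, which you correctly identify as the crux but do not prove. After reducing to the attainment of $\max v(v-V')$, you assert that writing everything in the standard basis turns this into ``a finite combinatorial problem'' which the equality $[L:K]=ef$ resolves. That is not an argument, and it mislocates the difficulty. The equality $(1)$ is used up once it hands you a valuation basis $u_1,\ldots,u_N$ of $L$; what remains is the purely vector-space statement that if $L$ has a $K$-valuation basis then distances to every \emph{finite-dimensional} subspace are attained, i.e.\ that $\sup_{(d_k)\in D}\min_k\bigl(v(c_k-d_k)+v(u_k)\bigr)$ is attained when $D\subseteq K^N$ is a finite-dimensional $K$-subspace. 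This claim is false without the finiteness of $\dim D$ --- see the paper's example with $V'=\Span_K(t^i-t^{i+1}\mid i\in\N)$ inside $V=\Span_K(t^i\mid i\in\N)$, where $V$ has a valuation basis yet $v(t-V')$ has no maximum --- so any proof must exploit $\dim D<\infty$ through a genuine induction/exchange argument. That argument is precisely the content of Lemmas~\ref{val_basis_dim1} and~\ref{lem:val_bas_subext} (pick the summand $c_jb_j$ of minimal value in the expansion of the new vector and swap $b_j$ out of the basis), and it is the step your sketch omits. The paper sidesteps your difficulty entirely by deducing $(3)\Rightarrow(4)$ directly from Lemma~\ref{lem:val_bas_subext}; if you wish to keep your route through $(1)\Rightarrow(4)$, you must supply that exchange lemma rather than appeal to the numerical equality.
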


The second part of Section \ref{sec:defectext} deals with arbitrary $vs$-defectless extensions (not necessarily finite). In particular, we study the logical implications of the following properties of a valued field extension $(L|K,v)$:

\begin{enumerate}
\item[(A)] the extension $(L|K,v)$ is $vs$-defectless;
\item[(B)] for every $K$-vector space $V\subseteq L$ of finite dimension and every $a\in L$, the set~$\{v(a-x)\mid x\in V\}$ has a maximal element; 
\item[(C)] $L$ is linearly disjoint over $K$ from every immediate extension $M$ of $K$ (in every common field extension over $K$). 
\end{enumerate}

In \cite{D}, Delon proved the following theorem. 

\begin{theorem*}[Delon] For any valued field extension  $(L|K,v)$, $(B)\Rightarrow(A)\Rightarrow (C)$.  
\end{theorem*}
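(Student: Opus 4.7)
The proof splits into two independent implications, which I would attack separately.

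For $(B)\Rightarrow(A)$, my plan is to induct on $\dim_K V$ to build a $K$-valuation basis of an arbitrary finitely generated $K$-subspace $V\subseteq L$. The base case $\dim_K V\leqslant 1$ is immediate. For the inductive step, fix a codimension-one subspace $V'\subset V$ and, by the induction hypothesis, a $K$-valuation basis $b_1,\ldots,b_{n-1}$ of $V'$. Choose any $a\in V\setminus V'$; by hypothesis (B), the set $\{v(a-x)\mid x\in V'\}$ attains its maximum at some $x_0\in V'$. Set $b_n:=a-x_0$; the claim is that $\{b_1,\ldots,b_n\}$ is a $K$-valuation basis of $V$. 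The key identity is that for any $c\in K^\times$ and $y\in V'$, rewriting $cb_n+y=c(a-(x_0-c^{-1}y))$ and invoking the maximality of $x_0$ yields $v(cb_n+y)\leqslant v(cb_n)$. A short case analysis against the ultrametric inequality then forces $v(cb_n+y)=\min\{v(cb_n),v(y)\}$ in all three comparisons between $v(cb_n)$ and $v(y)$. Combined with the valuation independence of $b_1,\ldots,b_{n-1}$, this yields $v\bigl(\sum_{i=1}^n c_ib_i\bigr)=\min_i v(c_ib_i)$ for any coefficients $c_i\in K$, completing the induction.

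For $(A)\Rightarrow(C)$, the crucial tool is the earlier characterization of immediate extensions via valuation independence (Proposition~\ref{pro:char-immediate}). Assume for contradiction that $L$ fails to be linearly disjoint from some immediate extension $M$ of $K$ inside a common field extension $N$; extend $v$ to $N$ by Chevalley's theorem. Then there exist $K$-linearly independent $a_1,\ldots,a_n\in M$ and $c_1,\ldots,c_n\in L$, not all zero, with $\sum_{i=1}^{n}c_ia_i=0$. Applying (A) to $V:=\Span_K(c_1,\ldots,c_n)\subseteq L$ produces a $K$-valuation basis $b_1,\ldots,b_m$ of $V$, and writing $c_i=\sum_{j=1}^m\lambda_{ij}b_j$ with $\lambda_{ij}\in K$ rearranges the relation as $\sum_{j=1}^m \alpha_j b_j=0$, where $\alpha_j:=\sum_{i=1}^n\lambda_{ij}a_i\in M$. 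The $K$-linear independence of the $a_i$, together with the fact that some $c_i$ is nonzero, forces some $\alpha_j$ to be nonzero. However, $b_1,\ldots,b_m$ are $K$-valuation independent inside the valued field extension $N$ of $M$, so Proposition~\ref{pro:char-immediate}, applied to the immediate extension $M|K$, implies they are also $M$-valuation independent, hence in particular $M$-linearly independent in $N$. This contradicts the non-trivial relation $\sum_j\alpha_j b_j=0$.

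I expect the most delicate step of the plan to be the case analysis in the first implication establishing $v(cb_n+y)=\min\{v(cb_n),v(y)\}$ in the equal-valuation case, where one must combine the upper bound coming from maximality of $x_0$ with the ultrametric lower bound. In the second implication, the conceptual work lies in recognizing that rewriting an $L$-linear dependence in terms of a $K$-valuation basis turns it into an $M$-linear dependence against a $K$-valuation independent tuple, after which Proposition~\ref{pro:char-immediate} delivers the contradiction at once.
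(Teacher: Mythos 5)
Your proposal is correct and follows essentially the same route as the paper: the $(B)\Rightarrow(A)$ induction with $b_n:=a-x_0$ and the maximality/ultrametric case analysis is exactly the content of Lemma~\ref{lem:1-ext} and Corollary~\ref{lem:1-ext-variant} as used in Theorem~\ref{thm:main}, and your $(A)\Rightarrow(C)$ argument (recast as a contradiction) hinges on Proposition~\ref{pro:char-immediate} together with the observation that valuation independence implies linear independence, just as in the paper. The only cosmetic difference is your appeal to Chevalley's extension theorem, which is superfluous since condition (C) already places $L$ and $M$ in a common \emph{valued} field extension.
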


Delon's proof of $(A)\Rightarrow (C)$ uses tools from the model theory of pairs of valued fields as studied by Baur in \cite{B}. It remained open whether implications $(A)\Rightarrow (B)$ and $(C)\Rightarrow (A)$ hold in general. We answer both questions by showing that the former implication does hold in general, while the latter does not. An example of a valued field extension that does not satisfy the implication $(C)\Rightarrow (A)$ is given in Proposition \ref{prop:notCA}. The general theory developed in Section \ref{sec:valind} allows us to provide a fully algebraic proof of the following: 

\begin{theorem*}[Later Theorem \ref{thm:main}] Let $(L|K,v)$ be an extension of valued fields. Then $(A)\Leftrightarrow (B) \Rightarrow (C)$. 
\end{theorem*}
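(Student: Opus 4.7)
The plan is to establish the three implications $(A)\Rightarrow(B)$, $(B)\Rightarrow(A)$, and $(A)\Rightarrow(C)$ separately, using the general theory of valuation independence from Section~\ref{sec:valind}.

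For $(A)\Rightarrow(B)$ the idea is that a $K$-valuation basis of the enlarged space reduces the distance-maximization problem to a maximum over a finite set of values. Given a finite-dimensional $V\subseteq L$ and $a\in L\setminus V$ (the case $a\in V$ being trivial), set $W=V+Ka$ and use $(A)$ to produce a $K$-valuation basis $\{b_1,\ldots,b_{n+1}\}$ of $W$. Normalize the $K$-linear form $\pi\colon W\to K$ with $\ker\pi=V$ so that $\pi(a)=1$, and put $\lambda_i=\pi(b_i)$. The set $\{v(a-x):x\in V\}$ coincides with $\{v(y):y\in\pi^{-1}(1)\}$; writing $y=\sum_i y_ib_i$, valuation independence gives $v(y)=\min_i v(y_ib_i)$, while the constraint $\sum_i y_i\lambda_i=1$ combined with the ultrametric forces $\min_i v(y_i\lambda_i)\leq 0$, and hence $v(y)\leq \max_{j:\,\lambda_j\neq 0}v(b_j/\lambda_j)=:\beta$. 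This bound is attained at $y^{*}=b_{j^{*}}/\lambda_{j^{*}}$ for any $j^{*}$ realizing $\beta$, so $x_0:=a-y^{*}$ is the desired maximizer in $V$.

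For $(B)\Rightarrow(A)$ I would induct on $n=\dim_K V$, the case $n=1$ being immediate. For the inductive step, fix any hyperplane $V'\subset V$ and any $a\in V\setminus V'$; by induction $V'$ admits a $K$-valuation basis $\{c_1,\ldots,c_n\}$, and by $(B)$ some $x_0\in V'$ maximizes $v(a-x)$ over $V'$. One then verifies that $\{c_1,\ldots,c_n,a-x_0\}$ is a $K$-valuation basis of $V$: for any combination $y=\sum_i\gamma_ic_i+\gamma_{n+1}(a-x_0)$ with $\gamma_{n+1}\neq 0$, rewriting it as $\gamma_{n+1}(a-x')$ with $x'=x_0-\sum_i(\gamma_i/\gamma_{n+1})c_i\in V'$, the maximality of $x_0$ yields $v(y)\leq v(\gamma_{n+1}(a-x_0))$, the ultrametric yields $v(y)\geq\min\{\min_i v(\gamma_ic_i),v(\gamma_{n+1}(a-x_0))\}$, and a brief case split on whether these two quantities agree delivers the required equality.

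For $(A)\Rightarrow(C)$, let $(M|K,v)$ be an immediate extension inside a common extension with $L$, and let $\{a_1,\ldots,a_n\}\subseteq L$ be $K$-linearly independent. Apply $(A)$ to $\Span_K\{a_1,\ldots,a_n\}$ to obtain a $K$-valuation basis $\{b_1,\ldots,b_n\}$; by Proposition~\ref{pro:char-immediate} this set is $M$-valuation independent, hence $M$-linearly independent, and since $\{a_i\}$ and $\{b_i\}$ are related by a $K$-invertible, and therefore $M$-invertible, change-of-basis matrix, $\{a_i\}$ is $M$-linearly independent as well, establishing the linear disjointness of $L$ and $M$ over $K$. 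The main obstacle is the first implication $(A)\Rightarrow(B)$: one is tempted to extend a $K$-valuation basis of $V$ to one of $W$ by a Steinitz-style exchange, but such an extension is not generally available. The plan therefore bypasses this by using only the valuation basis of $W$ and encoding the hyperplane $V$ through the linear form $\pi$, reducing the problem to the one-dimensional constraint $\pi(y)=1$ where an explicit maximizer can be located.
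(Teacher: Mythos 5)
Your proof is correct. The implications $(B)\Rightarrow(A)$ and $(A)\Rightarrow(C)$ follow the paper's route: the inductive step of $(B)\Rightarrow(A)$ that you verify by hand (rewriting $y=\gamma_{n+1}(a-x')$ and combining the maximality of $x_0$ with the ultrametric inequality) is exactly the content of Lemma~\ref{lem:1-ext} and Corollary~\ref{lem:1-ext-variant}, which the paper simply cites, and $(A)\Rightarrow(C)$ is the same appeal to Proposition~\ref{pro:char-immediate} plus Remark~\ref{rmk:valInd}. Where you genuinely diverge is $(A)\Rightarrow(B)$. The paper sets $V:=W\oplus Kb$, invokes Lemma~\ref{lem:val_bas_subext} to turn a valuation basis of $V$ into a valuation basis of $V$ \emph{over} $W$, and reads off the maximum from Lemma~\ref{lem:vbasis_max}. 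You instead keep the absolute valuation basis of $W=V+Ka$ and encode $V$ as $\ker\pi$ with $\pi(a)=1$: the constraint $\sum_i y_i\lambda_i=1$ forces $\min_i v(y_i\lambda_i)\leq 0$, which bounds $v(y)=\min_i v(y_ib_i)$ by $\beta=\max_{\lambda_j\neq 0}v(b_j/\lambda_j)$, and $y^*=b_{j^*}/\lambda_{j^*}$ attains it. This is a correct, self-contained alternative that produces an explicit formula for $\max v(a-V)$ and bypasses the exchange machinery of Lemmas~\ref{val_basis_dim1} and~\ref{lem:val_bas_subext}; what it forgoes is the reusable relative statement (a valuation basis of the big space over the subspace) that the paper's lemmas establish and exploit elsewhere. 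One small quibble: your closing claim that extending a valuation basis of $V$ to one of $W$ is ``not generally available'' is overly pessimistic --- in the finite-dimensional setting Lemma~\ref{lem:val_bas_subext} together with Lemma~\ref{lem:basis_transitivity} yields exactly such an extension --- but your workaround is valid regardless.
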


In Section \ref{sec:CA} we study various instances where the implication $(C)\Rightarrow (A)$ does hold. A first example was already given by Delon in \cite{D}, where she showed that if $(K,v)$ is an algebraically maximal Kaplansky field, then $(C)\Rightarrow (A)$ for any valued field extension $(L|K,v)$. Unfortunately, a gap was found in her proof. However, we recover her theorem as a special case of the following abstract criterion (thus preventing a snowball effect of incorrect proofs, as her result was used by the second author and Delon in \cite{cubi-delon}). 

\begin{theorem*}[Later Theorem \ref{thm:criterion}]
Suppose $\cal K$ is an elementary class of valued fields having the following properties:
\begin{enumerate}
\item[(P1)] every member of $\cal K$ is existentially closed in each of its maximal immediate extensions,
\item[(P2)] all maximal immediate extensions of members of $\cal K$ are again members of $\cal K$,
\item[(P3)] if $(K,v)\in \cal K$ and $(F,v)$ is a relatively algebraically closed subfield such that $(K|F,v)$ is
immediate, then $(F,v)\in \cal K$.
\end{enumerate}
Then, for all $(K,v)\in \cal K$, every extension $(L|K,v)$ satisfies the implication $(C)\Rightarrow (A)$. 
\end{theorem*}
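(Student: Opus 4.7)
The plan is to argue by contradiction: assume $(C)$ holds and $(A)$ fails. By the main theorem $(A) \Leftrightarrow (B)$, the failure of $(A)$ yields a finite-dimensional $K$-vector subspace $V \subseteq L$ and an element $a \in L$ such that $\{v(a-x) : x \in V\}$ has no maximum. Choosing $x_m \in V$ with $\gamma_m := v(a-x_m)$ strictly increasing and cofinal in this set produces a pseudo-Cauchy sequence $(x_m)$ in $V$ with pseudo-limit $a \in L$ but no pseudo-limit in $V$.

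The first structural observation I would establish is that $(P1)$ alone forces $(K,v)$ to be algebraically maximal. Indeed, any nontrivial algebraic immediate extension $K(\alpha)|K$ embeds into some maximal immediate extension $\tilde K$ of $K$, and the existential sentence ``$\exists x \, q(x) = 0$'' for the minimal polynomial $q$ of $\alpha$ then holds in $\tilde K$ while failing in $K$ (since $q$ is irreducible of degree greater than one with no root in $K$), contradicting existential closure. By Kaplansky's classical theorem, this is equivalent to saying that every pseudo-Cauchy sequence in $K$ of algebraic type admits a pseudo-limit in $K$.

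The technical heart of the argument is to distil from the vector-valued pseudo-Cauchy data a scalar pseudo-Cauchy sequence $(c_m)$ in $K$ together with a pseudo-limit $\lambda \in L \setminus K$ having no pseudo-limit in $K$. Taking $V$ of minimal dimension $n$, one shows that $V$ admits a basis $b_1, \ldots, b_n$ whose first $n-1$ elements form a $K$-valuation basis of their span $W$ and for which $b_n$ has no best approximation from $W$; writing the approximations $w_m \in W$ of $b_n$ in these coordinates and applying a pigeonhole/subsequence argument—together with the valuation-independence of the $b_i$'s to compute valuations of differences—yields the desired pair $((c_m), \lambda)$. Properties $(P2)$ and $(P3)$ enter at this stage: $(P2)$ keeps $\tilde K$ inside $\mathcal K$ so that $(P1)$ remains available after passing to a maximal immediate extension, and $(P3)$ allows replacing $K$ by a relatively algebraically closed subfield $F \subseteq K$ with $K|F$ immediate, so that the coordinate reduction can be carried out while preserving membership in $\mathcal K$.

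By algebraic maximality of $K$, the sequence $(c_m)$ cannot be of algebraic type: it would then admit a pseudo-limit $c \in K$, and a direct computation using valuation-independence and the construction of $\lambda$ would force $\lambda = c \in K$, contradicting $\lambda \notin K$. Hence $(c_m)$ is of transcendental type, and by Kaplansky $K(\lambda)|K$ is an immediate transcendental extension. But $K(\lambda) \subseteq L$, so the pair $\{1, \lambda\} \subseteq L$ is $K$-linearly independent yet $K(\lambda)$-linearly dependent, contradicting the linear disjointness asserted by $(C)$. The main obstacle is the scalar-reduction step: cleanly extracting the scalar sequence $(c_m)$ from the vector-valued pseudo-Cauchy data and producing the witness $\lambda \in L$, while verifying that the base-field replacements permitted by $(P2)$ and $(P3)$ carry the hypotheses safely through the argument.
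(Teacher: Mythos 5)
Your opening reduction via Theorem \ref{thm:main}, and your observation that (P1) forces every member of $\mathcal{K}$ to be algebraically maximal, are both correct; your endgame also works in the one\-dimensional case, since if $v(a-Kb_1)$ has no maximum then $\lambda:=a/b_1\in L$ is a limit of a pseudo-Cauchy sequence from $K$ with no limit in $K$, which by algebraic maximality must be of transcendental type, so that $K(\lambda)|K$ is a nontrivial immediate subextension of $L$ and (C) fails. The genuine gap is the ``scalar-reduction step,'' which you flag as the main obstacle but do not supply, and which cannot be carried out in the form your endgame needs. Your contradiction requires an element $\lambda\in L\setminus K$ generating an immediate extension of $K$, i.e.\ a violation of (C) witnessed by a \emph{two-term} dependence \emph{inside $L$}. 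But once $\dim_K W\geq 2$, the coordinate pseudo-Cauchy sequences you extract by pigeonhole live in $K$ and their limits live in a maximal immediate extension $M$ of $K$ --- there is no mechanism forcing a limit to lie in $L$. Concretely, take $b_2\in L$ with $v(b_2)\notin vK$ and $a=d_1+d_2b_2$, where $d_1,d_2\in M$ are limits of interlacing transcendental pseudo-Cauchy sequences from $K$ without limits in $K$: then $v\bigl(a-(K\oplus Kb_2)\bigr)$ has no maximum, yet $v(a-K)$ attains its maximum at $v(d_2b_2)$, and the failure of (C) is witnessed only by the three-term dependence $a-d_1\cdot 1-d_2\cdot b_2=0$ over the \emph{external} field $M$. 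Hunting for an internal immediate subextension is therefore structurally too weak; this is essentially the same phenomenon that makes Proposition \ref{prop:notCA} possible.

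The paper's proof proceeds in the opposite direction: instead of deriving a contradiction from the failure of (A), it constructs the valuation basis. It embeds a maximal immediate extension $M$ of $K$ into a highly saturated elementary extension $K^*$ (this is where (P1) and saturation enter), proves by a Zorn's lemma argument --- using (P2), (P3), Kaplansky's theorems and hypothesis (C) --- that $(L.M|L,v)$ is immediate, then applies Baur's Theorem \ref{thm:baur} over the maximal field $M$ to obtain a normalized $M$-valuation basis of $\Span_M(u_1,\ldots,u_n)$, approximates it from $L$ using the immediacy of $L.M|L$ together with Lemma \ref{approxvi}, and finally uses elementarity of the pair $(L^*|K^*,v)$ over $(L|K,v)$ to replace the $M$-coefficients by coefficients from $K$. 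None of these ingredients --- the composite $L.M$, the proof that $L.M|L$ is immediate, the transfer of coefficients back to $K$ --- appears in your outline, and the roles you assign to (P2) and (P3) (``so that the coordinate reduction can be carried out'') do not correspond to any concrete step. To repair your approach you would essentially have to reintroduce the composite with $M$ and the linear dependences of arbitrary length over $M$, at which point you are reconstructing the paper's argument.
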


Classes $\cal K$ satisfying the conditions of the previous theorem include:

\begin{enumerate}
\item[$\bullet$] the class of all tame valued fields (which includes the class of all algebraically maximal Kaplansky fields),
\item[$\bullet$] the class of all henselian finitely ramified fields (which includes the class of all $\wp$-adically closed fields).
\end{enumerate}

To conclude, we show the following result which in particular covers the situation of rank 1 discretely valued fields. 

\begin{theorem*}[Later Theorem \ref{thm:cofinal}] Let $(L|K,v)$ be such that: 
\begin{enumerate}
\item $\widehat{K}$ (the completion of $K$) is the maximal immediate extension of $K$ and 
\item $vK$ (the value group of $K$) is cofinal in $vL$. 
\end{enumerate}
Then $(C)\Rightarrow (A)$. 
\end{theorem*}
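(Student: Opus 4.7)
My strategy is to verify condition (B)---which is equivalent to (A) by Theorem~\ref{thm:main}---for the given extension. So fix a finite-dimensional $K$-subspace $V\subseteq L$, a $K$-basis $v_1,\ldots,v_n$ of $V$, and an element $a\in L$; the task is to exhibit $x_0\in V$ attaining $\max\{v(a-x)\mid x\in V\}$.

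The first step would be to realize everything inside the completion $\widehat L$. Because $vK$ is cofinal in $vL$, the valuation topology of $L$ restricts to the canonical topology of $K$, so Cauchy sequences in $K$ are Cauchy in $L$ and the inclusion $K\hookrightarrow L$ extends to an inclusion of valued fields $\widehat K\hookrightarrow \widehat L$. By hypothesis (1), $\widehat K|K$ is immediate and $\widehat K$ itself is a maximal (spherically complete) valued field. Since $\widehat K|K$ is immediate, hypothesis (C) ensures that $v_1,\ldots,v_n$ remain $\widehat K$-linearly independent inside $\widehat L$; in particular $V\widehat K:=\Span_{\widehat K}(v_1,\ldots,v_n)\subseteq \widehat L$ has $\widehat K$-dimension $n$. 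Extending $v_1,\ldots,v_n$ to a $K$-basis of $L$ and invoking linear disjointness once more, one then checks $L\cap V\widehat K = V$.

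The key input is the classical fact that a maximal (spherically complete) valued field is $vs$-defectless in every extension; applied to the finite-dimensional $\widehat K$-subspace $V\widehat K$ of $\widehat L$, it yields $y_0\in V\widehat K$ with $\gamma_0:=v(a-y_0)=\max\{v(a-y)\mid y\in V\widehat K\}$. If $\gamma_0=\infty$, then $a=y_0\in L\cap V\widehat K=V$, and $a$ itself is the maximizer. Otherwise, write $y_0=\sum_i a_iv_i$ with $a_i\in\widehat K$; by cofinality of $vK$ in $vL$, choose $\mu\in vK$ with $\mu>\gamma_0-v(v_i)$ for every $i$, and by density of $K$ in $\widehat K$, choose $c_i\in K$ with $v(a_i-c_i)\geq\mu$. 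Then $x_0:=\sum_i c_iv_i\in V$ satisfies $v(y_0-x_0)>\gamma_0$, and the ultrametric triangle inequality forces $v(a-x_0)=\gamma_0$; since $\gamma_0$ dominates the analogous maximum over $V\subseteq V\widehat K$, the element $x_0$ is the desired maximizer.

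The main obstacle is the invocation of ``spherically complete $\Rightarrow$ $vs$-defectless in every extension'' for $\widehat K$. While classical---essentially due to Krull and reproved in many guises---one should make sure it is accessible within the algebraic framework of Section~\ref{sec:valind}, for instance through an induction on dimension leveraging the pseudo-Cauchy/pseudo-limit characterization of spherical completeness. Everything else is bookkeeping: the cofinality hypothesis is precisely what provides both the embedding $\widehat K\hookrightarrow\widehat L$ and the approximation used in the final step.
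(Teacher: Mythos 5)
Your proof is correct and runs on essentially the same machinery as the paper's: condition (C) together with cofinality to realize the maximal field $\widehat K$ inside a common extension with $L$, Baur's theorem (Theorem~\ref{thm:baur}) over $\widehat K$, and then cofinality plus density to pull the $\widehat K$-coefficients back to $K$. The only difference is one of packaging: the paper approximates an entire $\widehat K$-valuation basis of $\Span_{\widehat K}(b_1,\dots,b_n)$ to produce a $K$-valuation basis of $V$ (establishing (A) directly), whereas you approximate a single best approximant $y_0$ of $a$ to verify (B); by Theorem~\ref{thm:main} these are interchangeable.
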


We would like to acknowledge that very recently, Romain Rioux obtained independently a proof of implication $(A)\Rightarrow (B)$ for arbitrary valued field extensions as a byproduct of results in his PhD thesis \cite{rioux}. Although the result does not appear in \cite{rioux}, he communicated to us that the key propositions from which it can be derived are {\cite[Propositions 2.3.13 and 2.3.16]{rioux}}. His approach is however different from ours. 


\section{Valuation independence in valued vector spaces}\label{sec:valind}

We work over a valued field extension $(L|K,v)$ and we let $W\subseteq V$ be $K$-vector spaces with $V\subseteq L$. The following notation will be used throughout the paper. For subsets $X,Y $ of $L$, we set $v(X)+v(Y):=\{v(x)+v(y)\mid x\in X, y\in Y\}$. For a subset $U\subseteq V$ and $a\in L$, we define
\begin{eqnarray*}
v(U) & := & \{v(b)\, |\, b\in U\},\\
\res(U,a) & := & \{ \res(a'/a) \, | \, a'\in U \textrm{ and } v(a')=v(a)\}.
\end{eqnarray*}
We let $\Res(U,a)$ denote the \emph{multiset} $\{ \res(a'/a)\ | \ a'\in U \textrm{ and } v(a')=v(a)\}$, that is, we allow repetition of elements. This distinction between $\res(U,a)$ and $\Res(U,a)$ will be particularly useful concerning linear independence, as it may well be the case that $\res(U,a)$ is a $K$-linearly independent set while $\Res(U,a)$ is not (for instance when $\res(U,a)$ contains a unique element which is repeated in $\Res(U,a)$). Note that the identity
\[
\Span_{Kv}(\res(U,a))=\Span_{Kv}(\Res(U,a)),
\]
always holds. We let $vW:=v(W)\setminus\{\infty\}$. 

\subsection{Basic properties}

The next lemma follows immediately from the definition of a valuation basis.
\begin{lemma}	\label{val_basis_propert}
For every $K$-valuation basis $B:=\{b_i\, |\, i\in I\}$ of $V$ and $a\in L$ we have
\begin{enumerate}
\item \emph{($\dagger$)} $v(V)= v(K)+v(B)$,
\item $\res(V,a)= \Span_{Kv}(\res(B,a))$,
\item\emph{($\dagger$)} for every $\{c_i\, | \, i\in I\}\subset  K^{\times}$ the system $\{c_ib_i\, | \, i\in I\}$ is a $K$-valuation basis of $V$. 
\end{enumerate}
\end{lemma}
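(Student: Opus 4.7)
My plan is to prove each part directly from Definition~\ref{def:val_indep} together with elementary properties of valuations and residues. Parts (1) and (3) are routine; (2) is where the main content lies.

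For (1), the inclusion $v(K)+v(B)\subseteq v(V)$ is immediate since $cb\in V$ and $v(cb)=v(c)+v(b)$ for every $c\in K^\times$ and $b\in B$. For the reverse, write any $x\in V\setminus\{0\}$ as $x=\sum_{i=1}^n c_ib_i$ with $c_i\in K^\times$ and pairwise distinct $b_i\in B$, and apply $(K,v)$-valuation independence (with $W=\{0\}$) to obtain $v(x)=\min_i v(c_ib_i)=v(c_j)+v(b_j)$ for some $j$, so that $v(x)\in v(K)+v(B)$. For (3), rescaling by nonzero scalars preserves the $K$-linear span of $B$, and any $K$-combination $\sum d_i(c_ib_i)$ rewrites as $\sum(d_ic_i)b_i$, to which valuation independence of $B$ applies verbatim, yielding $v(\sum d_i(c_ib_i))=\min_i v(d_i(c_ib_i))$.

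For (2), take $y\in\res(V,a)$, pick $x\in V$ with $v(x)=v(a)$ and $y=\res(x/a)$, and expand $x=\sum_i c_ib_i$. Set $J:=\{i\mid v(c_ib_i)=v(a)\}$; valuation independence forces $J\neq\emptyset$, and for $i\notin J$ the quotient $c_ib_i/a$ has positive valuation and so contributes $0$ to the residue, giving $\res(x/a)=\sum_{i\in J}\res(c_ib_i/a)$. For each $i\in J$, I would apply (3) to rescale $b_i$ to $b_i':=c_ib_i$ (of valuation exactly $v(a)$), producing an alternative valuation basis $B'$ for which $\res(b_i'/a)\in\res(B',a)$; unwinding the rescaling then exhibits each summand $\res(c_ib_i/a)$ as a $Kv$-multiple of an element arising from $\res(B,a)$. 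The reverse inclusion is immediate from $B\subseteq V$. The main obstacle is precisely the case where no $b_i$ in the support of $x$ already has valuation $v(a)$, and it is exactly the rescaling step via (3) that resolves it; this is why it is natural to prove (3) first, or at least invoke it freely while handling (2).
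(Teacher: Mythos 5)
Your treatment of (1) and (3) is correct and is precisely the routine verification the paper has in mind (it offers no proof, declaring the lemma immediate from the definitions). One small slip elsewhere: in (2) the reverse inclusion is not quite ``immediate from $B\subseteq V$''; you need to observe that a nonzero combination $\sum_i \res(d_i)\res(b_i/a)$ with $d_i\in\OO_K$ equals $\res\bigl(\bigl(\sum_i d_ib_i\bigr)/a\bigr)$ and that $v\bigl(\sum_i d_ib_i\bigr)=v(a)$ by valuation independence.

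The real problem is the forward inclusion in (2): the rescaling step does not close the gap you correctly identified. If $i\in J$ but $v(c_i)\neq 0$, then $v(b_i)\neq v(a)$, so $\res(b_i/a)$ is simply not an element of $\res(B,a)$, which by definition only collects $\res(b/a)$ for those $b\in B$ with $v(b)=v(a)$. Passing to $B'=\{c_ib_i\}$ does put $\res(c_ib_i/a)$ into $\res(B',a)$, but ``unwinding the rescaling'' would require factoring $\res(c_ib_i/a)$ as a residue in $Kv$ times an element of $\res(B,a)$, and no such factorization exists when $v(c_i)\neq 0$. Indeed, as literally stated (2) is incompatible with (3): take $V=K=L$ with a nontrivial valuation, $B=\{b\}$ with $v(b)\neq 0$ (a valuation basis of $K$ by (3)), and $a=1$; then $\res(V,1)=(Kv)^{\times}$ while $\res(B,1)=\emptyset$, so $\Span_{Kv}(\res(B,1))=\{0\}$. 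The identity in (2) requires a normalization hypothesis in the spirit of (N1) and (N3) of Subsection~\ref{subsec:normal} --- for instance, that every $b\in B$ with $v(b)\in v(a)+vK$ satisfies $v(b)=v(a)$ --- and that is exactly the setting in which the paper later uses it (Lemma~\ref{valind_relations}, where $B$ is normalized). Under such a hypothesis your argument works with no rescaling at all: for $i\in J$ one gets $v(b_i)=v(a)$, hence $v(c_i)=0$ and $\res(c_ib_i/a)=\res(c_i)\,\res(b_i/a)\in\Span_{Kv}(\res(B,a))$.
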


An important example of a valuation independent set is given by the following result. 

\begin{lemma}[{\cite[Lemma~3.2.2 ]{prestel2005}}]\label{algvind_1}
Let $X\subseteq L$ be such that for any two elements in $X$, their image under the valuation belong to
distinct cosets modulo $vK$. Let $Y\subseteq \OO_L$ be such that $\Res(Y,1)$ is $Kv$-linearly independent. Then the set $B:=\{xy\mid x\in X, y\in Y\}$ is $K$-valuation independent. 
\end{lemma}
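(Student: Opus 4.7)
Consider an arbitrary finite $K$-linear combination
\[
T\;=\;\sum_{(i,j)\in S} c_{ij}\,x_iy_j
\]
over a finite set $S$ of distinct pairs $(x_i,y_j)\in X\times Y$, with not all $c_{ij}$ zero. The goal is $v(T)=\min_{(i,j)\in S}v(c_{ij}x_iy_j)$. The strategy has two stages. First, for each fixed index $i$, analyze the partial sum $s_i:=\sum_j c_{ij}y_j$ via the residue map, using $Kv$-linear independence of $\Res(Y,1)$ to deduce that $v(s_i)\in vK$. Second, combine the terms $x_is_i$ by invoking the hypothesis that the values $v(x_i)$ lie in pairwise distinct cosets modulo $vK$.

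For the first stage, for any $i$ with some $c_{ij}\neq 0$, pick $j_0$ minimizing $v(c_{ij_0})$ and factor $s_i=c_{ij_0}\,t_i$ with $t_i=\sum_j (c_{ij}/c_{ij_0})y_j$. Then $t_i\in\OO_L$, its $j_0$-th coefficient equals $1$, and
\[
\res(t_i)\;=\;\sum_j \res(c_{ij}/c_{ij_0})\,\res(y_j)
\]
is a $Kv$-linear combination of elements of $\Res(Y,1)$ in which the coefficient at $\res(y_{j_0})$ is $1$. By linear independence of $\Res(Y,1)$, one has $\res(t_i)\neq 0$; hence $v(t_i)=0$, and therefore $v(s_i)=v(c_{ij_0})=\min_j v(c_{ij})\in vK$.

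For the second stage, write $T=\sum_i x_is_i$, the sum ranging over those indices $i$ with $s_i\neq 0$. Since $v(s_i)\in vK$ and $v(x_i)-v(x_{i'})\notin vK$ for $i\neq i'$, the valuations $v(x_is_i)=v(x_i)+v(s_i)$ lie in pairwise distinct cosets of $vK$, and in particular are pairwise distinct. The strict triangle inequality for sums with pairwise distinct summand valuations then yields
\[
v(T)\;=\;\min_i v(x_is_i)\;=\;\min_{i,j}v(c_{ij}x_iy_j),
\]
which is the desired valuation independence identity. As a preliminary, distinct pairs $(x,y)\in X\times Y$ yield distinct products: from $xy=x'y'$ one gets $v(x/x')=v(y'/y)\in vK$, which by the coset hypothesis on $X$ forces $x=x'$ and hence $y=y'$, so $B$ is honestly indexed by $X\times Y$.

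The main obstacle lies in the first stage: one must transfer the $Kv$-linear independence of the multiset $\Res(Y,1)$ into the numerical statement $v(s_i)\in vK$. This is exactly what allows the coset hypothesis on $X$ to kick in in the second stage, which is otherwise a routine application of the strict triangle inequality.
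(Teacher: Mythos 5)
Your proof is correct and follows the standard argument for this lemma (which the paper does not prove itself but cites from Engler--Prestel): first $v\left(\sum_j c_{ij}y_j\right)=\min_j v(c_{ij})\in vK$ via the $Kv$-linear independence of the residues, then the blocks $x_i\sum_j c_{ij}y_j$ have values lying in pairwise distinct cosets modulo $vK$ and hence pairwise distinct values; your Stage 1 is moreover the same computation as the paper's own proof of Lemma~\ref{almost_normalized}. Note only that you tacitly assume every $y\in Y$ is a unit (you treat each $\res(y_j)$ as a member of $\Res(Y,1)$ and use $v(c_{ij}x_iy_j)=v(c_{ij})+v(x_i)$); this is the intended reading of the hypothesis, since an element $y\in Y$ with $v(y)>0$ does not occur in $\Res(Y,1)$ as defined, and the conclusion would fail for such a $Y$ (e.g.\ $Y=\{1,t\}$ with $t\in K$, $v(t)>0$).
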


If in addition  $1\in X$ and $1\in Y$, then the set $B$ will be called a \emph{standard $K$-valuation independent set}. Compare the previous situation with the following lemma.

\begin{lemma} \label{lem:immediate_valdep}
Assume that $(L|K,v)$ is an immediate extension of valued fields. Then every two elements $a,b\in L^\times$ are $K$-valuation dependent.
\end{lemma}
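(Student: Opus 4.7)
The plan is to produce, for any given $a,b\in L^\times$, an explicit $K$-linear combination witnessing failure of the valuation-additivity identity from Definition~\ref{def:val_indep}. Concretely, it suffices to find $c\in K^\times$ such that
\[
v(a-cb)\;>\;v(a)=v(cb),
\]
since then $\{a,b\}$ fails to be $(K,v)$-valuation independent (take $c_1=1$, $c_2=-c$, and observe that the valuation of the combination strictly exceeds $\min\{v(a),v(cb)\}$).

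To produce such a $c$, I would exploit the two clauses hidden in the hypothesis of immediacy: $vL=vK$ and $Lv=Kv$. First, since $a/b\in L^\times$ and $v(a/b)\in vL=vK$, choose $d\in K^\times$ with $v(d)=v(a/b)$. This normalizes the quotient: $a/(db)\in \mathcal{O}_L^\times$, so its residue lies in $(Lv)^\times=(Kv)^\times$. Next, using $Lv=Kv$, pick $u\in\mathcal{O}_K^\times$ with $\res(u)=\res(a/(db))$. Then $v(a/(db)-u)>0$, which, after multiplying by $db$, becomes $v(a-udb)>v(db)=v(a)$.

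Setting $c:=ud\in K^\times$ completes the argument: $v(a-cb)>v(a)=v(cb)$, hence $\{a,b\}$ is $(K,v)$-valuation dependent.

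There is essentially no obstacle here; the statement is a direct unpacking of the definition of immediate extension, and it also serves (as announced in Remark~\ref{rmk:valInd}) to confirm that $K$-linear independence is strictly weaker than $K$-valuation independence, since any two $K$-linearly independent elements of a nontrivial immediate extension will now furnish a counterexample.
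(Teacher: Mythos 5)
Your argument is correct and is essentially the paper's own proof: both use $vL=vK$ to rescale one element so the quotient becomes a unit, then use $Lv=Kv$ to match its residue by an element of $K$, yielding a linear combination of strictly larger value. The only difference is cosmetic (you rescale $b$ to match $a$, the paper rescales $a$ to match $b$).
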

\begin{proof}
Take $a,b\in L^\times$. Since by assumption $vL=vK$, there is $c\in K$ such that $v(ca)=v(b)$. Hence $v(\frac{ca}{b})=0$. As $Lv=Kv$, there is an element $x\in K$ such that $\res(\frac{ca}{b})=\res(x)$. Then $v(\frac{ca}{b}-x)>0$ and consequently
\[
v(ca-xb)>vb=\min\{v(ca),v(xb)\}.
\]
This shows that $a,b$ are $K$-valuation dependent. 
\end{proof}

\begin{definition}[$\dagger$]\label{def:vvs_immediate} An extension $W\subseteq V$ is called \emph{immediate} if for all $a\in V\setminus\{0\}$ there is $b\in W$ such that $v(a-b)>v(a)$. 
\end{definition}

For $a\in V$ we set 
\[
v(a-W):=\{v(a-b) \mid b\in W\}. 
\]

\begin{lemma}[$\dagger$]\label{lem:vvs_immediate}
An extension $W\subseteq V$ is immediate if and only if for all $a\in V\setminus W$ the set $v(a-W)$ has no maximal element. 
\end{lemma}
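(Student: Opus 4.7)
The plan is to exploit the fact that the content of the definition of immediacy is trivial for $a\in W\setminus\{0\}$ (where one can just take $b=a$ to get $v(a-b)=\infty>v(a)$), so that the definition is equivalent to the same statement restricted to $a\in V\setminus W$. Once this is noted, both directions reduce to an observation about the translation invariance $v(a-W)=v((a-b)-W)$, which holds for any $b\in W$ because $W$ is a $K$-subspace and hence $b+W=W$.

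For the forward direction, I would assume $W\subseteq V$ is immediate and fix $a\in V\setminus W$; the goal is to show that no value in $v(a-W)$ is maximal. I would pick an arbitrary $b\in W$ and note that $a-b\in V\setminus W$ (hence in $V\setminus\{0\}$), so immediacy, applied to $a-b$, produces some $b'\in W$ with $v((a-b)-b')>v(a-b)$. Setting $b'':=b+b'\in W$ gives $v(a-b'')>v(a-b)$, showing that $v(a-b)$ is not maximal in $v(a-W)$.

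For the backward direction, I would assume that $v(a-W)$ has no maximal element for every $a\in V\setminus W$, and take an arbitrary $a\in V\setminus\{0\}$. If $a\in W$, the choice $b=a$ already witnesses $v(a-b)=\infty>v(a)$; otherwise $a\in V\setminus W$, and since $0\in W$ one has $v(a)=v(a-0)\in v(a-W)$, so by hypothesis there exists $b\in W$ with $v(a-b)>v(a)$, which is exactly the immediacy condition.

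I do not foresee any serious obstacle: the entire argument is the translation-invariance observation combined with a careful reading of the two definitions, and no use of the valued-field structure beyond the value-group ordering and the subspace property of $W$ is required (which is consistent with the $(\dagger)$ tag signaling that the result extends to the broader setting of the Appendix).
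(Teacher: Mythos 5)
Your proof is correct and follows essentially the same route as the paper's: the forward direction applies the definition of immediacy to the translate $a-b$ and absorbs the resulting correction back into $W$, and the converse splits on whether $a\in W$ and otherwise uses $0\in W$ to compare against $v(a-0)=v(a)$. No issues.
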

\begin{proof}
Assume that the extension $W\subseteq V$ is immediate. Fix $a\in V\setminus W$ and take $c\in W$. Then $a-c\in V$, so by definition of immediate extensions, there is $b\in W$ such that $v(a-c-b)>v(a-c)$. Since $c+b\in W$, this shows that $v(a-W)$ has no maximal element.

For the converse, take $a\in V\setminus \{0\}$.  We wish to find $b\in W$ such that $v(a-b)>v(a)$. If $a\in W$, then obviously $v(a-a)>v(a)$. Otherwise, by assumption $v(a-W)$ has no maximal element, so in particular there is $b\in W$ such that $v(a-b)>v(a-0)=v(a)$.
\end{proof}

\begin{remark}
Note that for all $a\in L$, if $v(a-W)$ has no maximal element, then $v(a-W)\subseteq vW$. Indeed, for any element $b\in W$ we can find $c\in W$ such that $v(a-b)<v(a-c)$. Thus
$v(a-b)=v(a-b-(a-c))=v(c-b)\in vW$. 
\end{remark}

\begin{lemma} 
If the extension $W\subseteq V$ is immediate, then 
\begin{enumerate}
\item \emph{($\dagger$)} $vW=vV$;
\item  $Wv=Vv$.
\end{enumerate}
\end{lemma}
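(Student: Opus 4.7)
The plan is to prove each of the two equalities by a direct two-inclusion argument. In both cases the containment $W \subseteq V$ yields one direction for free, so the real content is to establish $vV \subseteq vW$ and $Vv \subseteq Wv$. Both will follow from a single application of the immediacy hypothesis combined with the standard ultrametric fact that if $v(x) \neq v(y)$, then $v(x+y) = \min\{v(x), v(y)\}$.

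For part (1), I would take $\alpha \in vV$, writing $\alpha = v(a)$ for some nonzero $a \in V$. The immediacy of $W \subseteq V$ produces an element $b \in W$ with $v(a-b) > v(a)$. Note that $b$ cannot be $0$, since otherwise $v(a-b) = v(a)$. Now applying the ultrametric inequality to $b = a - (a-b)$, whose two summands have distinct valuations, gives $v(b) = \min\{v(a), v(a-b)\} = v(a) = \alpha$, so $\alpha \in vW$.

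For part (2), the argument takes place at the level of the residue but is structurally identical. Take $\bar a \in Vv$ and write $\bar a = \res(a)$ for some $a \in \OO_V$. If $\bar a = 0$, then $\bar a = \res(0) \in Wv$. Otherwise $v(a) = 0$, so immediacy yields $b \in W$ with $v(a-b) > v(a) = 0$. The ultrametric inequality, applied as in the previous paragraph, forces $v(b) = v(a) = 0$, so $b \in \OO_W$; and $v(a-b) > 0$ translates to $\res(b) = \res(a) = \bar a$, giving $\bar a \in Wv$.

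There is no genuine obstacle here: the proof is essentially one invocation of the definition of immediate extension per part. The only mild care required is in (2), where one must verify that the witnessing element $b$ produced by immediacy actually lies in $\OO_W$ (not merely in $W$) so that its residue is defined; this is ensured by the strict inequality $v(a-b) > v(a) \geq 0$, which forces $v(b) \geq 0$.
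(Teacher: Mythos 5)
Your proof is correct and follows essentially the same route as the paper: both arguments produce, via immediacy, an element $b\in W$ with $v(a-b)>v(a)$ and then use the ultrametric equality to conclude $v(b)=v(a)$ (resp.\ $\res(b)=\res(a)$). The only cosmetic difference is that you argue directly from Definition~\ref{def:vvs_immediate}, whereas the paper routes the value-group part through Lemma~\ref{lem:vvs_immediate} and the remark that $v(a-W)\subseteq vW$; the underlying computation is identical.
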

\begin{proof}
Assume that $W\subseteq V$ is an immediate extension. Take an element $a\in V\setminus W$. Then by definition 
the set  $v(a-W)$ admits no maximal element. By the previous remark, this yields that $v(a-W)\subset vW$. Hence $v(a)=v(a-0)\in vW$. If moreover $v(a)=0$, then since $v(a-W)$ admits no maximal element, $v(a-b)>0$ for some $b\in W$. Thus $\res(a)=\res(b)\in Wv$. 
\end{proof}

The converse of the previous lemma is not true as shows the following example: 

\begin{example} Let $K=\Fp (t)$ with the $t$-adic valuation. Let $y$ be transcendental over $K$. Then there is a unique extension of $v$ to $K(y)$ with $v(y)=0$ and $res(y)$ transcendental over $Kv$, namely 
\begin{equation*} 
v\left(\sum_{i=0}^n a_iy^i\right) =\min_{0\leq i\leq n}\{v(a_i)\}.
\end{equation*}
Set $W:=\Fp[t]$ and $V:=W\oplus Span_K(ty)$. The reader can easily show that $\N=vW=vV$ and that $\Fp= Wv=Vv$. On the other hand, for every element $a=\sum_{i=0}^n c_it^i\in W$ we have that  
\[
v(ty-a)=v\left(ty-\sum_{i=0}^n c_it^i\right)= \min\left\{v(t),\, v\left(\sum_{i=0}^n c_it^i\right)\right\}\leqslant 1=v(ty),
\]
which shows that $W\subseteq V$ is not immediate. 
\end{example}

In the case where $W=K$ and $V=L$, the above lemma together with \cite[Theorem 1]{kaplansky} of Kaplansky, gives the following classical characterization of immediate extensions. 

\begin{theorem} \label{charact_immediate} 
Let $(L|K,v)$ be a valued field extension. Then the extension is immediate if and only if for every $a\in L\setminus K$ the set $v(a-K)$ has no maximal element. 
\end{theorem}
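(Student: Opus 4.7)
The plan is to specialize the machinery for general immediate vector-space extensions (developed just above) to the case $W=K$, $V=L$. Two separate arguments are needed, one for each direction, and both turn out to be essentially applications of the preceding lemmas.

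First I would handle the forward direction. Assuming $(L|K,v)$ is immediate as a valued field extension, the key intermediate claim is that the extension $K \subseteq L$ is immediate as a $K$-vector space extension in the sense of Definition \ref{def:vvs_immediate}. This claim is established by the same normalization argument already used in the proof of Lemma \ref{lem:immediate_valdep}: given any nonzero $a\in L$, first use $vL=vK$ to rescale $a$ by a constant $c\in K^{\times}$ so that $v(ca)=0$, then use $Lv=Kv$ to find $d\in K$ matching its residue, obtaining $v(a-d/c)>v(a)$. Once vector-space immediateness is in hand, Lemma \ref{lem:vvs_immediate} immediately yields that $v(a-K)$ has no maximal element for every $a\in L\setminus K$.

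For the converse, the plan is simply to chain the two preceding lemmas. Assuming $v(a-K)$ has no maximum for all $a\in L\setminus K$, Lemma \ref{lem:vvs_immediate} tells us that $K\subseteq L$ is an immediate vector-space extension, and then the unnamed lemma immediately preceding the theorem (which transfers vector-space immediateness to equality of value groups and residue fields) produces $vL=vK$ and $Lv=Kv$, i.e., $(L|K,v)$ is an immediate valued field extension.

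I do not anticipate a real obstacle here: all the work has already been absorbed into Lemmas \ref{lem:immediate_valdep} and \ref{lem:vvs_immediate}, together with the lemma above the theorem, so the argument is essentially a packaging step. The only point that requires minor care is the forward claim that field-immediateness implies vector-space-immediateness, but this is precisely the normalization trick already executed in Lemma \ref{lem:immediate_valdep}. The role of Kaplansky's Theorem 1 mentioned in the paper's framing seems to be historical or to provide an alternative derivation through pseudo-Cauchy sequences; the vector-space approach sketched above sidesteps it altogether.
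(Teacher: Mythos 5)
Your proof is correct, and for one of the two directions it takes a genuinely different route from the paper. The paper gives no written proof: it derives the theorem from the unnamed lemma preceding it (vector-space immediateness of $K\subseteq L$ implies $vK=vL$ and $Kv=Lv$, which via Lemma~\ref{lem:vvs_immediate} handles the ``if'' direction) \emph{together with} Kaplansky's Theorem~1 from \cite{kaplansky}, which supplies the ``only if'' direction by exhibiting each $a\in L\setminus K$ as a pseudo limit of a pseudo Cauchy sequence in $K$ without a limit in $K$, whence $v(a-K)$ has no maximum. You replace that citation with the elementary normalization trick already present in Lemma~\ref{lem:immediate_valdep}: rescale $a$ into value $0$ using $vL=vK$, match residues using $Lv=Kv$, and conclude that $K\subseteq L$ is immediate in the sense of Definition~\ref{def:vvs_immediate}, after which Lemma~\ref{lem:vvs_immediate} finishes the job. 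This makes the forward direction self-contained and purely algebraic (in the spirit the authors announce for Sections~2 and~3), at the cost of not recording the finer pseudo-Cauchy structure that Kaplansky's theorem provides and that the paper reuses later (e.g.\ in the proof of Theorem~\ref{thm:criterion}). Your treatment of the converse coincides with the paper's. One small point worth stating explicitly in a write-up: Definition~\ref{def:vvs_immediate} quantifies over all nonzero $a\in V$, including $a\in K$, but there the choice $b=a$ trivially gives $v(a-b)=\infty>v(a)$, so the normalization argument is only needed off $K$.
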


\begin{lemma}[$\dagger$]\label{lem:vbasis_max} A set $B\subseteq V$ is $K$-valuation independent over $W$ if and only if for every $b=\sum_{i=1}^n c_ib_i+a$ with $c_i\in K$, $b_i\in B$ and $a\in W$, we have that 
\begin{equation}\label{eq:1}\stepcounter{eqn}\tag{E\arabic{eqn}}
\max v(b-W)=\min_{1\leqslant i\leqslant n} \{v(c_ib_i)\}.
\end{equation}
Under the previous assumptions, if \eqref{eq:1} holds, then $v(b-a)=\max(v(b-W))$. 
\end{lemma}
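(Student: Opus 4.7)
My plan is to prove both implications by direct computation, exploiting the ultrametric inequality together with the observation that $0$ and $a$ both lie in $W$; these two choices of test element bound $v(b)$ and $v(b-a)$ from above by $\max v(b-W)$, and (together with \eqref{eq:1}) pin down the valuations we need.

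For the forward direction, I would assume $B$ is $(K,v)$-valuation independent over $W$ and let $b=\sum_{i=1}^n c_ib_i+a$ with $a\in W$. For an arbitrary $a'\in W$, I would write $b-a'=\sum_{i=1}^n c_ib_i+(a-a')$; since $a-a'\in W$, Definition~\ref{def:val_indep} applied to this sum gives $v(b-a')=\min\{v(c_ib_i),v(a-a')\}\leqslant \min_i v(c_ib_i)$. The choice $a'=a$ realises this upper bound (as $v(0)=\infty$), simultaneously proving \eqref{eq:1} and the ``moreover'' claim $v(b-a)=\max v(b-W)$.

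For the backward direction, I would assume \eqref{eq:1} holds for every element of the prescribed form, fix $c_i\in K$, pairwise distinct $b_i\in B$ and $a\in W$, and set $b:=\sum c_ib_i+a$. Applying \eqref{eq:1} to $\sum c_ib_i$ (viewed as $\sum c_ib_i+0$) gives $\max v(\sum c_ib_i-W)=\min_i v(c_ib_i)$; since $0\in W$ and the ultrametric yields $v(\sum c_ib_i)\geqslant \min_i v(c_ib_i)$, I conclude $v(\sum c_ib_i)=\min_i v(c_ib_i)$ exactly. I then compute $v(b)=v((\sum c_ib_i)+a)$ by comparing $v(\sum c_ib_i)=\min_i v(c_ib_i)$ with $v(a)$: if these two values differ, the sharpened ultrametric equality delivers $v(b)=\min\{\min_i v(c_ib_i),v(a)\}$ immediately.

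The only subtlety is the equality case $\min_i v(c_ib_i)=v(a)$, where the ultrametric gives merely $v(b)\geqslant \min_i v(c_ib_i)$. For the opposite inequality I would apply \eqref{eq:1} directly to $b$: since $0\in W$, this yields $v(b)\leqslant \max v(b-W)=\min_i v(c_ib_i)$, closing the argument. This equality case is the main (and only) obstacle, resolved by invoking \eqref{eq:1} a second time, for $b$ itself.
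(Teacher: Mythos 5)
Your proof is correct and takes essentially the same approach as the paper's: both directions are direct ultrametric computations using the test elements $a'=a$ and $a'=0$ of $W$ to identify $\max v(b-W)$ with $\min_i v(c_ib_i)$, and the ``moreover'' claim falls out of the forward computation at $a'=a$. The only cosmetic difference is that in the backward direction you invoke \eqref{eq:1} twice (once for $\sum_i c_ib_i$ and once for $b$), whereas the paper applies it only to $b$ and lets the plain ultrametric inequality handle the case distinction.
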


\begin{proof} Suppose first that $B$ is $K$-valuation independent over $W$. Set $I:=\{1,\ldots,n\}$. Then 
\[
\max(v(b-W))=\max_{x\in W}v\left(\sum_{i\in I} c_ib_i + x\right) = \max_{x\in W} \min_{i\in I}\{v(c_ib_i), v(x)\} = \min_{i\in I}\{v(c_ib_i)\}.  
\]
For the converse, suppose that equation \eqref{eq:1} holds for all $b\in \Span_K(B)\oplus W$. In this case, for  $b=\sum_{i\in I} c_ib_i+a$ we have that 
\[
v(b)=v\left(\sum_{i\in I} c_ib_i +a\right) \leqslant \max(v(b-W)) = \min_{i\in I} \{v(c_ib_i)\}. 
\]
The ultrametric inequality implies that $v(b)=\min_{i\in I}\{v(c_ib_i),v(a)\}$. The last assertion follows directly using $\eqref{eq:1}$ and the assumption that $B$ is $K$-valuation independent. 
\end{proof}

\begin{corollary}[$\dagger$]\label{cor:Kvalbasis} If $V$ admits a $K$-valuation basis over $W$ then for every $a\in V\setminus W$ the set $v(a-W)$ has a maximal element. 
\end{corollary}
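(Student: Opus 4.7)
The plan is to apply Lemma~\ref{lem:vbasis_max} directly. Let $B$ be a $K$-valuation basis of $V$ over $W$ and take $a\in V\setminus W$. Since $V=\Span_K(B)\oplus W$, we may write $a=\sum_{i=1}^n c_ib_i+w$ for some pairwise distinct $b_i\in B$, coefficients $c_i\in K$ and $w\in W$. Because $a\notin W$, at least one of the $c_i$ is nonzero; after discarding zero coefficients we may assume all $c_ib_i$ have finite valuation, so that $\min_{1\le i\le n}\{v(c_ib_i)\}$ is a well-defined element of $vL$ (not $\infty$).

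Now applying Lemma~\ref{lem:vbasis_max} to $b:=a$ and this representation, the left-hand-side equation \eqref{eq:1} gives
\[
\max v(a-W)=\min_{1\le i\le n}\{v(c_ib_i)\},
\]
and the final clause of that lemma asserts that this maximum is attained, namely at the element $w\in W$, where $v(a-w)=v\!\left(\sum_{i=1}^n c_ib_i\right)$ equals the minimum above. This is exactly the statement that $v(a-W)$ has a maximal element.

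There is essentially no obstacle here: the corollary is a direct repackaging of the content of Lemma~\ref{lem:vbasis_max}. The only subtle point worth flagging in the write-up is the use of $a\notin W$ to guarantee that the $K$-linear combination representing $a$ over $B$ is nontrivial, so that the minimum on the right-hand side of \eqref{eq:1} lies in $vL$ rather than being vacuously $\infty$; without this hypothesis the statement would be trivial for $a\in W$ anyway (with maximum $\infty$).
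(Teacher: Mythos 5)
Your proof is correct and is exactly the argument the paper intends: the corollary is stated without proof precisely because it is a direct application of Lemma~\ref{lem:vbasis_max} to the decomposition $a=\sum_{i=1}^n c_ib_i+w$ furnished by the valuation basis, with the maximum attained at $w$. Your remark that $a\notin W$ guarantees the combination is nontrivial (so the minimum lies in $vL$) is a sensible point to flag, though the conclusion would hold trivially with maximum $\infty$ otherwise.
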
 

\begin{lemma}[$\dagger$]\label{lem:basis_transitivity} 
Assume that $B,B'\subset V$. Then $B\cup B'$ is $K$-valuation independent over $W$ if and only if $B$ is $K$-valuation independent over $W$ and $B'$ is $K$-valuation independent over $W+\Span_K(B)$. 
\end{lemma}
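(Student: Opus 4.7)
The plan is to prove both directions by direct application of Definition~\ref{def:val_indep}, the key manoeuvre being to group the $B$-part of any combination together with the element of $W$ to form a single element of $W+\Span_K(B)$.

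For the forward direction, assume $B\cup B'$ is $K$-valuation independent over $W$. That $B$ is $K$-valuation independent over $W$ is immediate, since any combination $\sum c_ib_i + a$ with $b_i\in B$ and $a\in W$ is simply a combination of (pairwise distinct) elements of $B\cup B'$ plus an element of $W$, so the minimum formula transfers. For $B'$ over $W+\Span_K(B)$, take $\sum d_jb'_j + a'$ with $b'_j\in B'$ pairwise distinct and write $a' = w + \sum c_ib_i$ with $w\in W$, $b_i\in B$. Applying the hypothesis to $\sum d_jb'_j + \sum c_ib_i + w$ gives valuation equal to $\min\{v(d_jb'_j), v(c_ib_i), v(w)\}$, while the independence of $B$ (already established) gives $v(a') = \min\{v(c_ib_i), v(w)\}$. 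Combining these yields $v(\sum d_jb'_j + a') = \min\{v(d_jb'_j), v(a')\}$, as required.

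The backward direction is essentially the same computation run in reverse. Given an arbitrary combination $\sum c_ib_i + \sum d_jb'_j + a$ with $b_i\in B$, $b'_j\in B'$ and $a\in W$, set $a' := a + \sum c_ib_i \in W+\Span_K(B)$. The valuation independence of $B'$ over $W+\Span_K(B)$ gives
\[
v\Bigl(\sum d_jb'_j + a'\Bigr) = \min\bigl\{v(d_jb'_j),\, v(a')\bigr\},
\]
and the valuation independence of $B$ over $W$ gives $v(a') = \min\{v(c_ib_i), v(a)\}$ (the case $a'=0$ being harmless since $v(0)=\infty$). Substituting yields the required minimum over all the $v(c_ib_i)$, $v(d_jb'_j)$ and $v(a)$.

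The only genuinely delicate point is a bookkeeping one: Definition~\ref{def:val_indep} assumes the elements of a combination are pairwise distinct, so if $B\cap B'\neq\emptyset$ one must take care that elements are not counted twice. This is not an obstacle, however: valuation independence implies $K$-linear independence (Remark~\ref{rmk:valInd}), so on the right-hand side of the equivalence, $B'$ being valuation independent over $W+\Span_K(B)$ forces $B'\cap(W+\Span_K(B)) = \emptyset$, hence $B\cap B' = \emptyset$; and on the left-hand side we may reduce to this case by grouping the coefficient of any shared element. Therefore, assuming $B\cap B'=\emptyset$ without loss of generality, the argument above is complete.
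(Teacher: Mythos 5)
Your proof is correct and is precisely the straightforward grouping argument the paper has in mind (its own proof of this lemma reads only ``Straightforward''): split off the $B$-part of a combination, absorb it into the element of $W$ to land in $W+\Span_K(B)$, and chain the two minimum formulas. The only imprecise point is your claim that the left-to-right direction can be ``reduced'' to the disjoint case by grouping coefficients: if $B\cap B'\neq\emptyset$ the right-hand side of the equivalence is simply false (as you yourself show) while the left-hand side may well hold, so $B\cap B'=\emptyset$ is really an implicit hypothesis of the lemma rather than something one reduces to --- but since you correctly note that the right-hand side forces disjointness, and the lemma is only ever invoked with disjoint sets, this does not affect the substance of your argument.
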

\begin{proof}
Straightforward. 
\end{proof}

\begin{lemma}[$\dagger$] \label{val_basis_dim1}
Assume that $V$ admits a valuation basis $B$ over $W$. Then for any element $x\in V\setminus W$ there are $b\in B$ and $a\in W$ such that $\{x-a\}$ is $K$-valuation independent over $W$ and $B\setminus \{b\}$ is a $K$-valuation basis of $V$ over $W\oplus \Span_K(x)$.
\end{lemma}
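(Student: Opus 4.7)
The plan is to choose $b\in B$ and $a\in W$ so that replacing $b$ by $x-a$ yields a new $K$-valuation basis $B'$ of $V$ over $W$, and then conclude via the transitivity result of Lemma \ref{lem:basis_transitivity}. Concretely, I expand $x = \sum_{k\in F} c_k b_k + a$ in the basis $B$ over $W$, where $F\subseteq I$ is the (non-empty, since $x\notin W$) finite support, all $c_k\in K^\times$, and $a\in W$. By valuation independence of $B$ over $W$, $v(x-a)=\min_{k\in F} v(c_k b_k)$, so I select $b:=b_j$ for some $j\in F$ realising this minimum.

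Checking that $\{x-a\}$ is valuation independent over $W$ is short: for any $d\in K$ and $w\in W$, expanding $d(x-a)+w=\sum_{k\in F} dc_k b_k + w$ and invoking valuation independence of $B$ over $W$ reduces the claim to $v(dc_j b_j)=v(d(x-a))$, which holds by the choice of $j$. The more substantial step is to show that $B':=(B\setminus\{b_j\})\cup\{x-a\}$ is itself a $K$-valuation basis of $V$ over $W$. Spanning is immediate since $c_j\neq 0$ lets one recover $b_j$ from $x-a$ and the remaining $b_k$'s, so $\Span_K(B')+W=\Span_K(B)+W=V$. For valuation independence over $W$, I take a general combination $d(x-a)+\sum_{i\in I''} d_i b_i + w'$ with $I''\subseteq I\setminus\{j\}$ finite, reexpand it in the basis $B$, and use valuation independence of $B$ over $W$ to obtain one expression for its valuation. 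The target expression is $\min\{v(d(x-a)),\, v(d_i b_i)_{i\in I''},\, v(w')\}$, and the two minima must be shown to agree.

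The main obstacle is precisely this last comparison: reexpansion in $B$ produces cross-terms $(d_i+dc_i)b_i$ for indices $i\in I''\cap F$, and a priori these could contribute artificially large valuations through cancellation. The remedy is the ultrametric inequality together with the minimality condition $v(c_j b_j)\leq v(c_k b_k)$ for all $k\in F$: on the one hand, each term arising from the reexpansion is bounded below by some term of the target minimum (directly, or, for the cross-terms, via $v((d_i+dc_i)b_i)\geq \min\{v(d_i b_i),\, v(dc_i b_i)\}$ with $v(dc_i b_i)\geq v(dc_j b_j)$); on the other hand, each term of the target minimum is bounded below by some term of the reexpansion (in particular $v(d_i b_i)$ for $i\in I''\cap F$ is controlled by $v((d_i+dc_i)b_i)$ and $v(dc_i b_i)$). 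This establishes equality of the two minima, hence valuation independence of $B'$ over $W$.

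Once $B'$ is known to be a $K$-valuation basis of $V$ over $W$, I apply Lemma \ref{lem:basis_transitivity} to the decomposition $B'=\{x-a\}\cup(B\setminus\{b_j\})$ to conclude that $B\setminus\{b_j\}$ is $K$-valuation independent over $W+\Span_K(x-a)=W+\Span_K(x)$. A final span check, using $x\notin W$ to make the sum direct, upgrades this to the desired statement that $B\setminus\{b_j\}$ is a $K$-valuation basis of $V$ over $W\oplus\Span_K(x)$.
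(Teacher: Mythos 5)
Your proof is correct and follows essentially the same route as the paper's: the same expansion of $x-a$ in the basis $B$, the same choice of $b_j$ minimizing $v(c_kb_k)$ over the support, and the same control of the cross-terms $(d_i+dc_i)b_i$ via the ultrametric inequality together with that minimality. The only (cosmetic) difference is that you verify valuation independence of the full set $(B\setminus\{b_j\})\cup\{x-a\}$ over $W$ and then invoke Lemma~\ref{lem:basis_transitivity}, whereas the paper checks $B\setminus\{b_j\}$ over $W\oplus\Span_K(x)$ directly, via a case distinction on whether $v(cy)$ coincides with the value of the remaining combination.
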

\begin{proof}
Set $B=\{b_i\, |\, i\in I\}$. Take an element $x\in V\setminus W$. Then $x=a+\displaystyle\sum_{i\in I}c_ib_i$ for some $a\in W$ and $c_i\in K$ all but finitely many equal to zero. 
Set $y=x-a =\displaystyle\sum_{i\in I}c_ib_i$. Since $x\notin W$, we have that $y\neq 0$. As $B$ is $K$-valuation independent over $W$, for every $w\in W$  and $c\in K$ we have that 
\begin{equation}\label{partial_sum} \stepcounter{eqn}\tag{E\arabic{eqn}}
v(cy+w)=v\left(\sum_{i\in I}cc_ib_i +w\right)=\min_{i\in I}\{v(cc_ib_i),v(w)\}=\min \{v(cy),v(w)\}.
\end{equation} 
Hence $\{y\}$ is $K$-valuation independent over $W$. 

Choose an index $j\in I$ such that $c_jb_j$ is of minimal value among the summands $c_ib_i$, $i\in I$. We show that then $B'=B\setminus \{b_j\}$ is a $K$-valuation basis of $V$ over $W\oplus \Span_K(x)$. Since $c_j\neq 0$ we have that 
\[
W+ \Span_K(x)+\Span_K(B')= W+\Span_K(B)=V.
\]
Hence it is enough to show that $B'$ is $K$-valuation independent over the vector subspace $W\oplus \Span_K(x)=W\oplus \Span_K(y)$ of $V$. Take $c'_i\in K$, $i\in I$, all but finitely many equal to zero and $w+cy\in W\oplus \Span_K(y)$ with $w\in W$ and $c\in K$. 
Assume first that 
\[
v(cy)\neq v\left( \displaystyle \sum_{i\in I\setminus\{j\}} c'_ib_i+w\right).
\]
Then equation~\eqref{partial_sum} together with the $K$-valuation independence of $B$ over $W$ yields that
\begin{align*}
v\left( \sum_{i\in I\setminus\{j\}} c'_ib_i+(cy+w)\right) 	& = \min\left\{ v(cy), v\left( \sum_{i\in I\setminus\{j\}} c'_ib_i+w\right)\right\} \\  
															& = \min_{i\in I\setminus\{j\}}\{v(c'_ib_i), v(w), v(cy)\}= \min_{i\in I\setminus\{j\}}\{v(c'_ib_i), v(cy+w)\}.
\end{align*}
Assume now that 
\[
v(cy)=  v\left( \displaystyle \sum_{i\in I\setminus\{j\}} c'_ib_i+w\right) = \min_{i\in I\setminus\{j\}}\{v(c'_ib_i), v(w)\}. 
\]
We wish to show that 
\[
v\left( \sum_{i\in I\setminus\{j\}} c'_ib_i+(cy+w)\right)= \min_{i\in I\setminus\{j\}}\{v(c'_ib_i), v(cy+w)\} =v(cy),
\]
where the last equality follows from our assumption together with equation~\eqref{partial_sum}. Since $v(c_jb_j)=\displaystyle\min_{i\in I} v(c_ib_i)=v\left(\displaystyle\sum_{i\in I}c_ib_i\right)$, we have that $v(cy)=v(cc_jb_j)$. Thus from the $K$-valuation independence of $B$ over $W$ we obtain that
\begin{align}\label{sum_case2} \stepcounter{eqn}\tag{E\arabic{eqn}}	
v\left( \sum_{i\in I\setminus\{j\}} c'_ib_i+(cy+w)\right) & = & v\left( \sum_{i\in I\setminus\{j\}} (c'_i+cc_i)b_i+ cc_jb_j+w\right)\\  
																							& = & \min_{i\in I\setminus\{j\}}\{v((c'_i+cc_i)b_i), v(cc_jb_j), v(w)\}. \nonumber
\end{align} 

Note that by assumption $v(w)\geq v(cy)=v(cc_jb_j)$ and $v(c'_ib_i)\geq v(cy)$ for every $i\in I\setminus\{j\}$. Thus also $v((c'_i+cc_i)b_i)\geq \min\{v(c'_ib_i),v(cc_ib_i)\}\geq \min\{v(cy), v(cc_jb_j)\}=v(cy)$ for all $i\in I\setminus\{j\}$. Together with equation~\eqref{sum_case2} this yields that 
\[
v\left( \sum_{i\in I\setminus\{j\}} c'_ib_i+(cy+w)\right)= v(cy). 
\]
Hence we obtain that  $B'$ is $K$-valuation independent over $W\oplus \Span_K(x)$ and thus is a $K$-valuation basis of $V$ over $W\oplus \Span_K(x)$.
\end{proof}

\begin{lemma}[$\dagger$]\label{lem:val_bas_subext}
Assume that $V$ admits a $K$-valuation basis $B$ over $W$. If $W\subset W'$ is a finite subextension of $W\subset V$, then $W'$ admits a $K$-valuation basis $A$ over $W$. Moreover, there is $B'\subset B$ which is a valuation basis of $V$ over $W'$.
\end{lemma}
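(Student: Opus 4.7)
The plan is to induct on $n := \dim_K W'/W$, using Lemma~\ref{val_basis_dim1} as the engine for the inductive step and Lemma~\ref{lem:basis_transitivity} to glue the pieces together.

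The base case $n=0$ is trivial: we can take $A=\emptyset$ and $B'=B$. For $n=1$, pick any $x\in W'\setminus W$ so that $W'=W\oplus \Span_K(x)$. Lemma~\ref{val_basis_dim1} supplies $b\in B$ and $a\in W$ such that $\{x-a\}$ is $K$-valuation independent over $W$ and $B\setminus\{b\}$ is a $K$-valuation basis of $V$ over $W\oplus\Span_K(x)=W'$. Since $W' = W\oplus \Span_K(x-a)$, the singleton $A:=\{x-a\}$ is then a $K$-valuation basis of $W'$ over $W$, and $B':=B\setminus\{b\}$ is the desired subset of $B$.

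For the inductive step, assume the result holds whenever the relative dimension is strictly less than $n$ and suppose $\dim_K W'/W=n\geq 2$. Pick $x_1\in W'\setminus W$ and apply Lemma~\ref{val_basis_dim1} to obtain $b_1\in B$ and $a_1\in W$ such that, setting $W_1:=W\oplus \Span_K(x_1)=W\oplus \Span_K(x_1-a_1)$, the singleton $\{x_1-a_1\}$ is $K$-valuation independent over $W$ and $B_1:=B\setminus\{b_1\}$ is a $K$-valuation basis of $V$ over $W_1$. Since $\dim_K W'/W_1=n-1$, the induction hypothesis applied to the chain $W_1\subseteq W'\subseteq V$ (with $V$ now equipped with the $K$-valuation basis $B_1$ over $W_1$) yields a $K$-valuation basis $A_1$ of $W'$ over $W_1$ and a subset $B'\subseteq B_1\subseteq B$ that is a $K$-valuation basis of $V$ over $W'$. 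This $B'$ already witnesses the ``moreover'' clause.

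It remains to assemble a $K$-valuation basis of $W'$ over $W$. The natural candidate is $A:=\{x_1-a_1\}\cup A_1$. The spanning statement is immediate:
\[
W' \;=\; \Span_K(A_1)\oplus W_1 \;=\; \Span_K(A_1)\oplus \Span_K(x_1-a_1)\oplus W \;=\; \Span_K(A)\oplus W.
\]
For valuation independence over $W$, apply Lemma~\ref{lem:basis_transitivity}: $\{x_1-a_1\}$ is $K$-valuation independent over $W$ by choice, and $A_1$ is $K$-valuation independent over $W+\Span_K(x_1-a_1)=W_1$ by construction, so $A$ is $K$-valuation independent over $W$. I do not expect any serious obstacle here: the dimension-1 reduction is already encapsulated in Lemma~\ref{val_basis_dim1}, and the only subtlety is to keep track of the two outputs (a basis of $W'$ over $W$ and a subset of $B$) simultaneously through the induction, which is handled cleanly by carrying $B_1=B\setminus\{b_1\}$ into the recursive call.
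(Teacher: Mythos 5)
Your proof is correct and follows essentially the same route as the paper: the paper also peels off one dimension at a time via Lemma~\ref{val_basis_dim1} (phrased as an explicit finite iteration building $x_1,\ldots,x_s$ and $B_s=B\setminus\{b_1,\ldots,b_s\}$ rather than as a formal induction) and invokes Lemma~\ref{lem:basis_transitivity} to conclude that the accumulated elements form a valuation basis of $W'$ over $W$. The only difference is presentational.
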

\begin{proof}
Take any $x\in W'\setminus W$. Then, by Lemma~\ref{val_basis_dim1} there is $b_1\in B$ and $a_1\in W$ such that $x_1:=x-a_1$ forms a valuation basis of $W_1:=W\oplus \Span_K(x)=W\oplus \Span_K(x_1)\subseteq W'$ over $W$ and $B_1:=B\setminus\{b_1\}$ is a valuation basis of $V$ over $W_1$.

Take $s\leqslant $dim$_K W'/W$. Suppose we have chosen $x_1,\ldots,x_s\in W'$ which are $K$-valuation independent over $W$ and $b_1,\ldots,b_s\in B$ such that $B_s:=B\setminus\{ b_1,\ldots,b_s\}$ is a $K$-valuation basis of $V$ over 
$W_s:= W\oplus \Span_K(x_1,\ldots,x_s).$ If $s=$dim$_K W'/W$, then $W_s=W'$.
 Hence $A=\{x_1,\ldots,x_s\}$ and $B'=B_s$ satisfy the assertion of the lemma. 
Otherwise, there is some $x'\in W'\setminus W_s$. Since $x'\in V\setminus W_s$, by Lemma~\ref{val_basis_dim1} we have that there
 is some $a_{s+1}\in W_s$ and $b_{s+1}\in B_s$ such that for $x_{s+1}=x'-a_{s+1}$ the set $\{x_{s+1}\}$ is $K$-valuation independent over $W_s$ and 
$B_{s+1}:=B_s\setminus\{b_{s+1}\}=B\setminus \{ b_1,\ldots,b_{s+1}\}$ form a $K$-valuation basis of $V$ over 
\[
W_{s+1}:=W_s\oplus \Span_K(x')=W_s\oplus \Span_K(x_{s+1})= W\oplus \Span_K(x_1,\ldots,x_{s+1}). 
\]
By Lemma~\ref{lem:basis_transitivity} the elements $x_1,\ldots,x_{s+1}$ form a $K$-valuation basis of $W_{s+1}$ over $W$. 
 
Since dim$_K W'/W$ is finite, the above construction finishes after finitely many steps. 
\end{proof}

A similar construction yields the following.  

\begin{corollary}[$\dagger$]  
Assume that $V$ admits a $K$-valuation basis over $W$. Then every subspace $W'$ of $V$ which contains $W$ and such that $\dim_K W'/W$ is countable admits a $K$-valuation basis over $W$.
\end{corollary}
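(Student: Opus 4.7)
The plan is to mimic the finite-dimensional construction of Lemma~\ref{lem:val_bas_subext} but iterate it countably many times along an enumeration of generators of $W'$ over $W$. Since $\dim_K W'/W$ is countable, choose a (possibly finite) sequence $(y_n)_{n\in \N}$ in $W'$ such that $W'=W+\Span_K(y_n:n\in\N)$. I will construct inductively a sequence of elements $x_n\in W'$, of elements $b_n\in B$, and of subspaces $W_n$ with $W_0:=W$ and $B_0:=B$, so that the following invariants are preserved:
\begin{enumerate}
\item $y_1,\ldots,y_n\in W_n$;
\item $W_n= W\oplus \Span_K(x_i : i\leqslant n,\; x_i\neq 0)$;
\item $B_n:=B\setminus\{b_i : i\leqslant n,\; b_i\text{ chosen}\}$ is a $K$-valuation basis of $V$ over $W_n$.
\end{enumerate}

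At step $n+1$, if $y_{n+1}\in W_n$, set $x_{n+1}:=0$, $W_{n+1}:=W_n$ and do not remove any element from $B_n$ (so $B_{n+1}=B_n$). Otherwise, since $y_{n+1}\in V\setminus W_n$ and $B_n$ is a $K$-valuation basis of $V$ over $W_n$ by the induction hypothesis, Lemma~\ref{val_basis_dim1} applies to yield $a_{n+1}\in W_n$ and $b_{n+1}\in B_n$ such that, setting $x_{n+1}:=y_{n+1}-a_{n+1}$, the singleton $\{x_{n+1}\}$ is $K$-valuation independent over $W_n$, and $B_{n+1}:=B_n\setminus\{b_{n+1}\}$ is a $K$-valuation basis of $V$ over $W_{n+1}:=W_n\oplus \Span_K(x_{n+1})$. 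The three invariants are clear.

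Let $A:=\{x_n : n\in\N,\; x_n\neq 0\}$. For spanning, note that $y_{n+1}=x_{n+1}+a_{n+1}\in W_{n+1}$ in both cases, and $W_n\subseteq W\oplus \Span_K(A)$ for every $n$, hence $W'=W+\Span_K(y_n:n\in\N)\subseteq W\oplus \Span_K(A)\subseteq W'$. For $K$-valuation independence of $A$ over $W$, any finite $K$-linear combination involves only finitely many $x_i$'s, say with indices bounded by some $N$. An easy induction on $N$ using Lemma~\ref{lem:basis_transitivity}, together with the fact that $\{x_{n+1}\}$ is $K$-valuation independent over $W_n=W\oplus\Span_K(x_1,\ldots,x_n)$, shows that $\{x_1,\ldots,x_N\}$ (ignoring the zero entries) is $K$-valuation independent over $W$. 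Consequently $A$ is a $K$-valuation basis of $W'$ over $W$.

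The construction is essentially routine; the only delicate point is the bookkeeping invariant (3), which guarantees that at each stage Lemma~\ref{val_basis_dim1} can still be applied inside $V$ over the enlarged base $W_n$. Once this is maintained, both the spanning and the valuation independence of the resulting countable set follow automatically, the latter by finite-support arguments reducing back to the transitivity statement of Lemma~\ref{lem:basis_transitivity}.
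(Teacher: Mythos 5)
Your proof is correct and is precisely the argument the paper intends: the paper proves this corollary only by the remark ``a similar construction yields the following,'' referring to the iteration of Lemma~\ref{val_basis_dim1} carried out in the proof of Lemma~\ref{lem:val_bas_subext}, which is exactly what you do, with the bookkeeping invariant (3) being the point that keeps the induction alive and the finite-support reduction to Lemma~\ref{lem:basis_transitivity} handling independence of the countable set.
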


In contrast, the finiteness assumption on $W'/W$ is necessary to ensure the existence of a $K$-valuation basis of $(V,v)$ over $W'$ as is shown by the following example (cf. \cite[Example 3.62]{K}). Let $K$ be a trivially valued field and $L=K(\!( t )\!)$ with the $t$-adic valuation. Let $V:=\Span_K(t^i\mid i\in \mathbb{N})$ and $V':=\Span_K(t^i-t^{i+1} \mid i\in \mathbb{N})$. It is easy to check that $t\in V\setminus V'$ and furthermore that $v(t-V')$ has no maximal element. By Corollary \ref{cor:Kvalbasis}, $V$ does not admit a $K$-valuation basis over $V'$. Nonetheless, the set $\{t^i\mid i\in\mathbb{N}\}$ is a $K$-valuation basis of $V$.  

\begin{lemma}[$\dagger$]\label{lem:1-ext}Assume that for $x\in V\setminus W$ the set $v(x-W)$ admits a maximum. Then the space $\Span_K(x)$ admits a $K$-valuation basis over $W$. In particular, if for every $x\in V\setminus W$ the set $v(x-W)$ admits a maximum, then every $K$-subspace $W'$ of $V$ of dimension 1 over $W$ admits a valuation basis over $W$.
\end{lemma}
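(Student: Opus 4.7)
The plan is to make the valuation basis explicit. Let $\alpha := \max v(x-W)$ (which is well defined by hypothesis and is finite since $x \notin W$), and pick $a \in W$ with $v(x-a) = \alpha$. Set $y := x - a$. Since $y \equiv x \pmod{W}$, we have $\Span_K(y) + W = \Span_K(x) + W$ and $y \notin W$. The goal reduces to showing that the singleton $\{y\}$ is $K$-valuation independent over $W$, i.e. that
\[
v(cy + w) = \min\{v(cy), v(w)\} \quad \text{for all } c \in K,\ w \in W.
\]

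The case $c=0$ is immediate, so assume $c \neq 0$. By the ultrametric inequality, the equality can fail only when $v(cy) = v(w)$, in which case we must exclude the possibility $v(cy+w) > v(cy)$. Suppose toward contradiction that it holds. Dividing by $c$ yields $v(y + c^{-1}w) > v(y) = \alpha$. But $y + c^{-1}w = x - (a - c^{-1}w)$ with $a - c^{-1}w \in W$, so $v(x - (a - c^{-1}w)) > \alpha$, contradicting the maximality of $\alpha$. This establishes the first assertion, with $\{y\}$ playing the role of the valuation basis of $W \oplus \Span_K(x)$ over $W$.

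For the "in particular" part, let $W' \subseteq V$ be a subspace containing $W$ with $\dim_K W'/W = 1$, and pick any $x \in W' \setminus W$, so that $W' = W \oplus \Span_K(x)$. The hypothesis gives that $v(x - W)$ has a maximum, and applying the first part produces $y \in W' \setminus W$ with $\{y\}$ valuation independent over $W$ and $W' = W \oplus \Span_K(y)$; hence $\{y\}$ is a $K$-valuation basis of $W'$ over $W$.

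There is no real obstacle: the only subtle point is remembering that the ultrametric inequality forces equality in the "generic" case $v(cy) \neq v(w)$, so that one only has to rule out the strict-inequality scenario, which is exactly what the maximality of $\alpha$ is designed to prevent.
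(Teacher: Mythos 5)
Your proof is correct and follows essentially the same route as the paper's: both take $a\in W$ realizing $\max v(x-W)$, set $y:=x-a$, reduce to the case $v(cy)=v(w)$ via the ultrametric inequality and division by $c$, and use the maximality of $v(x-a)$ in $v(x-W)$ to rule out the strict inequality. The only cosmetic difference is that you phrase the final step as a contradiction while the paper sandwiches $v(y+w)$ between $v(y)$ and $\max v(x-W)=v(y)$ directly.
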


\begin{proof}
Take an element $a\in W$ such that $v(x-a)=\max v(x-W)$ and set $b:=x-a$. We show that $\{b\}$ is $K$-valuation independent over $W$, that is,  
\begin{equation}\label{val_min} \stepcounter{eqn}\tag{E\arabic{eqn}}	
v(cb+w)=\min\{v(cb),v(w)\}
\end{equation}
for every $w\in W$ and $c\in K$. Dividing if necessary by $c$, it is enough to prove equality~\eqref{val_min} for $c=1$. If $v(b)\neq v(w)$, then obviously equality~\eqref{val_min} holds. If~$v(w)=v(b)$, then 
 \[
v(b)=\min\{v(w),v(b)\}\leqslant v(b+w)=v(x-(a-w))\leqslant \max v(x-W)=v(b),
 \]
 as $a-w\in W$. Thus again equality~\eqref{val_min} holds.
\end{proof}

\begin{corollary}[$\dagger$]\label{lem:1-ext-variant}
Let $B=\{b_1,\ldots,b_n\}\subseteq V$ be a $K$-valuation independent set. Let $W:=\Span_K(B)$, $b\in V\setminus W$ and $a\in W$ be such that $v(b-a)=\max v(b-W)$. Then, for $b_{n+1}:=b-a$, $B\cup\{b_{n+1}\}$ is $K$-valuation independent. 
\end{corollary}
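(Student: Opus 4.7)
The plan is to deduce this immediately from the two preceding results, Lemma~\ref{lem:1-ext} and Lemma~\ref{lem:basis_transitivity}, by splitting the set $B\cup\{b_{n+1}\}$ into the old block $B$ and the single new vector $b_{n+1}$ and invoking transitivity of valuation independence.

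First I would observe that by hypothesis $B=\{b_1,\ldots,b_n\}$ is already $K$-valuation independent, which by definition is the same as being $K$-valuation independent over the zero subspace $\{0\}$. Next, since $b\in V\setminus W$ and $v(b-a)=\max v(b-W)$ exists, I can apply Lemma~\ref{lem:1-ext} directly (with $W$ in the role of $W$ and $x:=b$) to conclude that $\Span_K(b_{n+1})=\Span_K(b-a)$ admits a $K$-valuation basis over $W$; concretely, the argument in that lemma shows that the singleton $\{b_{n+1}\}$ itself is $K$-valuation independent over $W$.

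Now I would apply Lemma~\ref{lem:basis_transitivity} with the roles taken by $B$ (the given independent set), $B':=\{b_{n+1}\}$, and overarching subspace $W_0:=\{0\}$. Since $W_0+\Span_K(B)=\Span_K(B)=W$, the two conditions of that lemma become exactly:
\begin{itemize}
\item $B$ is $K$-valuation independent over $\{0\}$, and
\item $\{b_{n+1}\}$ is $K$-valuation independent over $W$,
\end{itemize}
both of which have just been verified. Therefore $B\cup\{b_{n+1}\}$ is $K$-valuation independent over $\{0\}$, i.e.\ $K$-valuation independent, which is the desired conclusion.

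There is essentially no obstacle: the only subtle point is matching the roles in Lemma~\ref{lem:basis_transitivity} correctly (taking the outer ambient space to be $\{0\}$ rather than $W$), but once this is done the corollary is a one-line consequence of the two lemmas. The proof transfers verbatim to the broader setting covered by the dagger convention, so the $(\dagger)$ label is consistent with the preceding two results.
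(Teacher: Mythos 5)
Your proof is correct and is essentially identical to the paper's own argument: the paper also applies Lemma~\ref{lem:1-ext} to conclude that $\{b_{n+1}\}$ is $K$-valuation independent over $W$ and then invokes Lemma~\ref{lem:basis_transitivity} to combine this with the independence of $B$. Your write-up merely makes explicit the role assignments in the transitivity lemma, which the paper leaves implicit.
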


\begin{proof}
By Lemma \ref{lem:1-ext}, $\{b_{n+1}\}$ is $K$-valuation independent over $W$. To conclude, using Lemma~\ref{lem:basis_transitivity}, we have that the set $B\cup\{b_{n+1}\}$ is $K$-valuation independent.
\end{proof}





\begin{corollary}[$\dagger$] There is a $K$-subvector space $W'$ such that $W\subseteq W'\subseteq V$, $W'$ admits a $K$-valuation basis over $W$ and $W'\subseteq V$ is an immediate extension. Every maximal subset of $V$ with respect to the property of being $K$-valuation independent over $W$ generates an immediate extension $W'\subseteq V$.
\end{corollary}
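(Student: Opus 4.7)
The plan is to prove the second (more general) statement first, since it yields the first statement by construction.

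I would invoke Zorn's Lemma on the collection of subsets of $V\setminus\{0\}$ which are $(K,v)$-valuation independent over $W$, ordered by inclusion. Because the condition in Definition~\ref{def:val_indep} only involves finite linear combinations, any union of a chain of such subsets is again $K$-valuation independent over $W$, providing the required upper bounds. Zorn thus produces a maximal element $B$. Setting $W':=W+\Span_K(B)$, Remark~\ref{rmk:valInd} gives that $B$ is $K$-linearly independent over $W$, so $W'=W\oplus\Span_K(B)$ and $B$ is, by definition, a $K$-valuation basis of $W'$ over $W$.

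It remains to check that the extension $W'\subseteq V$ is immediate. I would argue by contradiction via Lemma~\ref{lem:vvs_immediate}: suppose there exists $x\in V\setminus W'$ such that $v(x-W')$ admits a maximum, attained at some $a\in W'$. Setting $b:=x-a$, Lemma~\ref{lem:1-ext} applied to the pair $W'\subseteq V$ yields that $\{b\}$ is $K$-valuation independent over $W'$. Combining this with the $K$-valuation independence of $B$ over $W$, Lemma~\ref{lem:basis_transitivity} (applied to $W'=W+\Span_K(B)$) implies that $B\cup\{b\}$ is $K$-valuation independent over $W$. Note that $b\ne 0$ since $x\notin W'$, and $b\notin\Span_K(B)$: if $b$ lay in $\Span_K(B)\subseteq W'$ then $x=a+b\in W'$, a contradiction. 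Hence $B\cup\{b\}$ properly enlarges $B$, contradicting maximality. The first assertion of the corollary then follows by taking $W'$ as constructed.

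The main subtlety to verify is that Lemma~\ref{lem:basis_transitivity} remains valid when $B$ is infinite; this is immediate because valuation independence is a finitary condition, and the statement and proof of that lemma are insensitive to the cardinality of $B$. Apart from this, every step is an application of a lemma already established in this section, so no further delicate arguments are required.
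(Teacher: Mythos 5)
Your proof is correct and follows essentially the same route as the paper's: Zorn's Lemma applied to the valuation-independent subsets over $W$ (using that the condition is finitary), then a contradiction with maximality via Lemma~\ref{lem:vvs_immediate}, Lemma~\ref{lem:1-ext} and Lemma~\ref{lem:basis_transitivity}. The extra checks you supply (that unions of chains work, that $b\notin\Span_K(B)$ so the enlargement is proper) are exactly the details the paper leaves implicit.
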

\begin{proof}
Let $\mathcal{B}$ denote the collection of all subsets of $V$ which are $K$-valuation independent over $W$. Note that $\mathcal{B}$ is non-empty as the empty set is $K$-valuation independent over $W$. By Zorn's lemma, let $B\in \mathcal{B}$ be a maximal subset. We show the result for $W':=W\oplus\Span_K(B)$. It remains to show that $W'\subseteq V$ is immediate. Suppose for a contradiction it is not. By Lemma \ref{lem:vvs_immediate}, there is $b\in V$ such that $v(b-W')$ has a maximal element. Then by Lemma \ref{lem:1-ext}, there is $c\in V$ such $\{c\}$ is $K$-valuation independent over $W'$. Finally by Lemma \ref{lem:basis_transitivity} this implies that $B\cup\{c\}$ is $K$-valuation independent over $W$ which contradicts the maximality of $B$. The second part of the Corollary is clear from the proof. 
\end{proof}
 

\subsection{Normalized valuation bases}\label{subsec:normal}

Take a subset $B$ of $V$ and consider the following conditions:
\begin{enumerate}[(N1)]
\item for every $b,b'\in B$, if $v(b)$ and $v(b')$ lie in the same coset modulo $vK$, then $v(b)=v(b')$;
\item for every $b\in B$ the system $\Res(B,b)$ is $Kv$-linearly independent;
\item if $b\in B$ and $v(b)\in vK$, then $v(b)=0$;
\item if $b\in B$ and $\res(b)\in Kv$, then $\res(b)=1$.
\end{enumerate}

\begin{lemma} \label{almost_normalized}
Assume that a subset $B$ of $V$ satisfies conditions \textup{(N1)} and \textup{(N2)}. Then $B$ is a $K$-valuation independent set.
\end{lemma}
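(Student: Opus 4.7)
My plan is to establish the ultrametric equality defining $K$-valuation independence directly. Take a finite combination $s = \sum_{i=1}^n c_i b_i$ with pairwise distinct $b_i \in B$ and $c_i \in K^\times$, and set $\gamma := \min_i v(c_i b_i)$ and $I := \{i : v(c_i b_i) = \gamma\}$. Since the ultrametric inequality already gives $v\bigl(\sum_{i \notin I} c_i b_i\bigr) > \gamma$, it will suffice to show that $v\bigl(\sum_{i \in I} c_i b_i\bigr) = \gamma$.

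First I would exploit (N1): for every $i \in I$, the value $v(b_i) = \gamma - v(c_i)$ lies in the single coset $\gamma + vK$, so (N1) forces a common value $\beta$ for $v(b_i)$ on $I$, and in turn $v(c_i) = \gamma - \beta$ for all such $i$. Fixing any $i_0 \in I$ and setting $b := b_{i_0}$, I factor
\[
\sum_{i \in I} c_i b_i \;=\; c_{i_0} b \cdot \sum_{i \in I} \frac{c_i}{c_{i_0}} \cdot \frac{b_i}{b},
\]
where each factor $c_i/c_{i_0}$ and $b_i/b$ has valuation $0$. The entire task then reduces to showing that the residue of the rightmost sum is nonzero, for in that case $v\bigl(\sum_{i \in I} c_i b_i\bigr) = v(c_{i_0} b) = \gamma$.

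I would then argue by contradiction via (N2). If the residue vanished, we would obtain a relation $\sum_{i \in I} \res(c_i/c_{i_0}) \cdot \res(b_i/b) = 0$ over $Kv$ with all coefficients $\res(c_i/c_{i_0}) \in (Kv)^\times$. Since $v(b_i) = \beta = v(b)$ for every $i \in I$, each $\res(b_i/b)$ is an element of the multiset $\Res(B, b)$. The subtle point, and the only real obstacle, is unpacking (N2) correctly: linear independence of the \emph{multiset} $\Res(B, b)$ encodes both that distinct $b' \in B$ with $v(b') = v(b)$ give distinct residues $\res(b'/b)$ (otherwise a same element appears twice in the multiset and the trivial relation $\alpha - \alpha = 0$ violates independence) and that the resulting set of residues is $Kv$-linearly independent. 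With this in hand, $(\res(b_i/b))_{i \in I}$ is a family of pairwise distinct elements of a $Kv$-linearly independent family, and a nontrivial $Kv$-relation among them is impossible.

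In summary, the proof is a two-step ``reduction modulo the valuation ideal'': (N1) collapses the minimal-valuation terms onto a common valuation level, and (N2) prevents their residues from cancelling. Once the multiset reading of (N2) is handled, the rest is routine.
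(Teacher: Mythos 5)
Your proof is correct and follows essentially the same route as the paper's: factor out a term of minimal value, use (N1) to force the minimal-value terms to share a common $v(b_i)$, and use the multiset reading of (N2) to see that the residue of the normalized sum cannot vanish. The only cosmetic difference is that you split off the non-minimal terms before taking residues, whereas the paper keeps them in the sum and observes that their residues are zero; your unpacking of the multiset condition (distinct $b_i$ give distinct residues, and these form an independent set) is a valid equivalent of the paper's direct appeal to a nonzero coefficient in the combination.
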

\begin{proof}
Take $c_1,\ldots,c_n\in K$, $b_1,\ldots,b_n\in B$ and let $I:=\{1,\ldots,n\}$. Let $i_1\in I$ be such that $v(c_{i_1}b_{i_1})=\min\{v(c_ib_i) \,|\, i\in I\}$. Define $J:=\{i\in I\mid v(c_ib_i)=v(c_{i_1}b_{i_1})\}$. We have that 
\begin{equation} \label{eq_vsum} \stepcounter{eqn}\tag{E\arabic{eqn}} 
v\left(\sum_{i\in I} c_ib_i\right)=v(c_{i_1}b_{i_1})+ v\left(\sum_{i\in I} \frac{c_ib_i}{c_{i_1}b_{i_1}}\right).
\end{equation} 
By the choice of $i_1$, we have that $v\left(\frac{c_ib_i}{c_{i_1}b_{i_1}}\right)\geq 0$. For $i\in J$, we have $v(b_i)=v(b_{i_1})$. Indeed, if $v(b_i)\neq v(b_{i_1})$, then (N1) yields that $v(c_ib_i)=v(c_i)+v(b_i)\neq v(c_{i_1})+v(b_{i_1})=v(c_{i_1}b_{i_1})$, contradicting that $i\in J$. Note that this also shows that $v(c_i)=v(c_{i_1})$ for every $i\in J$. If $i\in I\setminus J$, then $v(c_ib_i)\neq v(c_{i_1}b_{i_1})$. Therefore, for every $i\in I\setminus J$, we have that $v\left(\frac{c_ib_i}{c_{i_1}b_{i_1}}\right)> 0$, and consequently $\res\left(\frac{c_ib_i}{c_{i_1}b_{i_1}}\right)=0$. This yields the following identity:
\[
\res\left(\sum_{i\in I} \frac{c_{i}b_{i}}{c_{i_1}b_{i_1}}\right)=\res\left(\sum_{i\in J} \frac{c_{i}b_{i}}{c_{i_1}b_{i_1}}\right)
=\sum_{i\in J} \res\left(\frac{c_{i}}{c_{i_1}}\right)\,\res\left(\frac{b_{i}}{b_{i_1}}\right).
\]
Since $i_1\in J$, one of the coefficients of the linear combination on the right hand side is equal to 1. Therefore, by condition (N2), the whole combination is nonzero. Hence,
\[
 v\left(\sum_{i\in I}^n\frac{c_ib_i}{c_{i_1}b_{i_1}}\right)=0,
\]
which, together with equation~\eqref{eq_vsum}, completes the proof. 
\end{proof}

\begin{definition} \label{normalized_def} 
In view of Lemma \ref{almost_normalized}, a set $B\subset V$ which satisfies conditions (N1)-(N4) will be called a \emph{normalized $K$-valuation independent set}. If in addition $B$ is a basis of $V$, then $B$ is called a \emph{normalized $K$-valuation basis of $V$}.
\end{definition}

\begin{lemma} \label{approxvi} Let $u_1,\ldots,u_n$ and $w_1,\ldots,w_n$ be elements of $L$. If
$\{u_1,\ldots,u_n\}$ is a normalized $K$-valuation independent set and $v(u_i-w_i)>v(u_i)$ for $1\leqslant i\leqslant n$, then $\{w_1,\ldots,w_n\}$ is
a normalized $K$-valuation independent set.
\end{lemma}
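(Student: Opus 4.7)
The plan is to verify directly that $\{w_1,\ldots,w_n\}$ satisfies each of the defining conditions (N1)--(N4) of a normalized $K$-valuation independent set, reducing each one to the corresponding condition for $\{u_1,\ldots,u_n\}$.

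First I would record two elementary observations. By the ultrametric inequality, $v(u_i-w_i)>v(u_i)$ forces $v(w_i)=v(u_i)$ for every $i$. Moreover, since $v(u_i-w_i)>v(u_i)=v(w_i)$, dividing $u_i-w_i$ by $u_i$ or by $w_i$ gives $v(w_i/u_i-1)>0$ and $v(u_i/w_i-1)>0$, so that $\res(w_i/u_i)=\res(u_i/w_i)=1$. Combining these, whenever $v(u_i)=v(u_j)$ (equivalently $v(w_i)=v(w_j)$), the factorisation
\[
\frac{w_j}{w_i} \;=\; \frac{w_j}{u_j}\cdot\frac{u_j}{u_i}\cdot\frac{u_i}{w_i}
\]
yields, after passing to residues, $\res(w_j/w_i)=\res(u_j/u_i)$. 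In particular one obtains the multiset equality $\Res(\{w_1,\ldots,w_n\},w_i)=\Res(\{u_1,\ldots,u_n\},u_i)$.

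With these observations in hand, the four conditions are checked as follows. Condition (N1) for the $w_i$'s reduces to (N1) for the $u_i$'s, because $v(w_k)=v(u_k)$ for every $k$, so $v(w_i)$ and $v(w_j)$ lie in the same coset modulo $vK$ iff $v(u_i)$ and $v(u_j)$ do, and in that case $v(u_i)=v(u_j)$ gives $v(w_i)=v(w_j)$. Condition (N2) is immediate from the multiset identity above combined with (N2) for the $u_i$'s. Condition (N3) follows because $v(w_i)\in vK$ iff $v(u_i)\in vK$, and then (N3) for $\{u_i\}$ forces $v(u_i)=0=v(w_i)$. For (N4), if $\res(w_i)\in Kv$, then in particular $v(w_i)\geq 0$, hence $v(u_i)\geq 0$; since $v(u_i-w_i)>v(u_i)\geq 0$, we get $\res(u_i)=\res(w_i)\in Kv$, so (N4) for $\{u_i\}$ yields $\res(u_i)=1=\res(w_i)$.

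There is no real obstacle beyond careful bookkeeping: every step is a routine application of the ultrametric inequality or a residue computation. The only mild subtlety is making sure that each residue that appears is actually defined, which is always guaranteed by the equality $v(w_k)=v(u_k)$ and the restriction that ratios $u_j/u_i$ and $w_j/w_i$ are formed only when $v(u_i)=v(u_j)$.
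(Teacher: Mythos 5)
Your proof is correct, and it is exactly the direct verification the paper has in mind: the paper's own proof of Lemma~\ref{approxvi} is simply the word ``Straightforward,'' and your reduction of each of (N1)--(N4) for the $w_i$ to the corresponding condition for the $u_i$, via $v(w_i)=v(u_i)$ and $\res(w_j/w_i)=\res(u_j/u_i)$, is the intended routine argument.
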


\begin{proof}
Straightforward. 
\end{proof}

The next lemma shows that condition (N2) can be replaced by the assumption that $B$ is $K$-valuation independent.
\begin{lemma}\label{lem:val_implies_N2}
Assume that $B\subset V$ is a $K$-valuation independent set. Then $B$ satisfies condition \textup{(N2)}. 
\end{lemma}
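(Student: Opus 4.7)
The plan is to argue by contradiction. Fix $b\in B$, and let $b_1,\ldots,b_n$ be the (distinct, as elements of $B$) elements of $B$ with $v(b_i)=v(b)$; so the multiset $\Res(B,b)$ is the list $\res(b_1/b),\ldots,\res(b_n/b)$. Suppose this multiset were $Kv$-linearly dependent. Then there would exist $\bar c_1,\ldots,\bar c_n\in Kv$, not all zero, such that
\[
\sum_{i=1}^n \bar c_i\,\res(b_i/b)=0.
\]

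The key step is to lift this dependence to a dependence in $L$ and then exploit valuation independence. Choose $c_i\in\OO_K$ with $\res(c_i)=\bar c_i$; note that whenever $\bar c_i\neq 0$ we have $v(c_i)=0$, while if $\bar c_i=0$ we may simply take $c_i=0$. Since $v(b_i)=v(b)$ for each $i$, the element $\sum_{i}c_ib_i/b$ lies in $\OO_L$ and has residue $\sum_{i}\bar c_i\,\res(b_i/b)=0$; therefore
\[
v\!\left(\sum_{i=1}^n c_ib_i\right) > v(b).
\]

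On the other hand, $K$-valuation independence of $B$ yields
\[
v\!\left(\sum_{i=1}^n c_ib_i\right)=\min_{1\leqslant i\leqslant n}\{v(c_ib_i)\}.
\]
For each index $i$ with $\bar c_i\neq 0$, we have $v(c_ib_i)=v(b_i)=v(b)$; since at least one such index exists, the minimum above is $v(b)$, contradicting the strict inequality. Hence no nontrivial $Kv$-linear relation exists, which is precisely condition (N2). The only subtle point, which is why I prefer to spell out the lifts $c_i$ explicitly, is the multiset subtlety: we must treat each $b_i\in B$ as contributing its own term even if some residues $\res(b_i/b)$ happen to coincide, and the argument above does exactly that. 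No further obstacle is expected; the proof is a direct translation of the ultrametric characterization of valuation independence into the residue field.
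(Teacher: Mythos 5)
Your proof is correct and follows essentially the same route as the paper's: lift the residues to elements of $\OO_K$, observe that the relation forces $v\bigl(\sum_i c_ib_i\bigr)>v(b)$, and use $K$-valuation independence to identify that value with $\min_i v(c_ib_i)=v(b)$, a contradiction. The only cosmetic difference is that you argue by contradiction with specifically chosen lifts, whereas the paper argues directly that any $c_i\in\OO_K$ satisfying the relation must all have $\res(c_i)=0$; your handling of the multiset issue matches the paper's.
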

\begin{proof}

Take $b, b_1,\ldots,b_n\in B$ such that $v(b_i)=v(b)$ for all $1\leqslant i\leqslant n$. Let $I$ denote the set $\{1,\ldots,n\}$. 
Suppose that 
\[
\sum_{i\in I} \res(c_i) \,\res\left(\frac{b_i}{b}\right)=0
\]
for $c_i\in\OO_K$. Then
\begin{equation*} 
v\left( \sum_{i\in I} c_i\frac{b_i}{b}\right)>0. 
\end{equation*}	
Since the set $B$ is $K$-valuation independent, there is $i_1\in I$ such that 
\[
v\left( \sum_{i\in I} c_ib_i\right)=v(c_{i_1}b_{i_1})=\min_{i\in I}\{v(c_ib_i)\}.
\]
Therefore, since $v(b_i)=v(b)$ for all $i\in I$, we have that 
\[
0<v\left( \sum_{i\in I} c_i\frac{b_i}{b}\right)=v\left( \sum_{i\in I} c_ib_i\right) -v(b) =v(c_{i_1}b_{i_1})-v(b)\leqslant v(c_{i}b_{i})-v(b)=v(c_i).  
\]
Thus $v(c_i)>0$ and consequently $\res(c_i)=0$ for every $i\in I$.
\end{proof}

\begin{lemma}\label{lem:existence-normalized} Let $B=\{b_i \,|\, i\in I\}\subset V$ be a $K$-valuation independent set. Then there is a set $\{c_i \,|\, i\in I\}\subset K^{\times}$ such that $B':=\{c_ib_i \,|\, i\in I\}$ is a normalized $K$-valuation independent set. 
\end{lemma}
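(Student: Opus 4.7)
The plan is to achieve conditions (N1), (N3), (N4) by a coset-by-coset rescaling of the $b_i$. Multiplying $b_i$ by $c_i \in K^\times$ shifts $v(b_i)$ by $v(c_i) \in vK$, so one can reach any element of the coset $v(b_i) + vK$, and whenever $v(c_i b_i) = 0$ the unit $c_i$ can additionally be used to rescale the residue by any element of $Kv^\times$. Scaling by elements of $K^\times$ preserves $K$-valuation independence directly from Definition~\ref{def:val_indep} (the minimum-value identity is scaled by $\min_i v(c_i)$), and Lemma~\ref{lem:val_implies_N2} then gives (N2) automatically, so the task reduces to (N1), (N3), (N4).

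Concretely, I would partition $I = I_0 \sqcup \bigsqcup_{j \in J} I_j$ according to the coset of $v(b_i)$ modulo $vK$, where $I_0 = \{i \in I : v(b_i) \in vK\}$ and each $I_j$ collects the indices whose values lie in one fixed nontrivial coset. For $i \in I_0$, first pick $e_i \in K^\times$ with $v(e_i) = -v(b_i)$, so that $v(e_i b_i) = 0$; if $\res(e_i b_i) \in Kv$, choose a unit $d_i \in \OO_K^\times$ with $\res(d_i) = \res(e_i b_i)^{-1}$ and set $c_i := d_i e_i$, otherwise set $c_i := e_i$. For each $j \in J$, fix a representative index $i_j \in I_j$, set $c_{i_j} := 1$, and for every other $i \in I_j$ choose $c_i \in K^\times$ with $v(c_i) = v(b_{i_j}) - v(b_i)$, which exists because this difference lies in $vK$ by definition of~$I_j$.

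The verification that $B' := \{c_i b_i : i \in I\}$ is normalized $K$-valuation independent then falls out directly: two elements of $B'$ whose values share a coset modulo $vK$ must lie in the same block $I_0$ or $I_j$, and by construction the values within each block are uniformly $0$ or $v(b_{i_j})$, respectively, yielding (N1) and (N3) simultaneously. Condition (N4) is enforced by construction for $i \in I_0$, while for indices in a nontrivial block $I_j$ the value $v(c_i b_i) = v(b_{i_j})$ lies outside $vK$, so the residue cannot be a nonzero element of $Kv$. The main point requiring care is the preservation of $K$-valuation independence under these rescalings, but this is immediate from Definition~\ref{def:val_indep}; beyond that the proof is routine bookkeeping, with the key structural observation being that (N1), (N3), (N4) can be imposed independently within each coset block.
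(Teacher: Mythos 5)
Your proposal is correct and follows essentially the same route as the paper: partition the index set by cosets of $v(b_i)$ modulo $vK$, rescale within each coset to a common value (taking that value to be $0$ and the residue to be $1$ for the trivial coset), observe that scaling by elements of $K^\times$ preserves $K$-valuation independence, and then deduce (N2) from Lemma~\ref{lem:val_implies_N2}. The only cosmetic difference is that the paper cites part (3) of Lemma~\ref{val_basis_propert} for the preservation of valuation independence under rescaling, where you argue it directly from Definition~\ref{def:val_indep}.
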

\begin{proof}
Take a subset $J$ of $I$ such that the elements $b_j\in B,\, j\in J$ are representatives of the cosets $v(b)+vK, \, b\in K$. Then for every $j\in J$ such that $v(b_j)\notin vK$ set $c_j=1$. If $i\in I\setminus J$ is such that $v(b_i)\notin vK$, take any $c_i\in K$ such that $v(c_i)+v(b_i)=v(b_j)$ for some $j \in J$. It remains to consider the elements $b_i\in B$ with $v(b_i)\in vK$. For every such $b_i$, take $x_i\in K$ such that $v(b_i)=-v(x_i)$. If $\res(x_ib_i)\notin Kv$, set $c_i=x_i$. Otherwise, choose $a_i\in K$ such that $\res(x_ib_i)=\res(a_i)$ and set $c_i=\frac{x_i}{a_i}$. By construction, $B'=\{c_ib_i \,|\, i\in I\}$ satisfies conditions (N1), (N3) and (N4). Since $c_i\neq 0$ for all $i\in I$, by part (3) of Lemma \ref{val_basis_propert}, the set $B'$ is $K$-valuation independent. Therefore, by Lemma \ref{lem:val_implies_N2}, condition (N2) holds. 
\end{proof}

Note that every standard $K$-valuation independent set is also a normalized valuation independent set. The next lemma 
says more about the relations between these two notions.

\begin{lemma}\label{valind_relations} Let $B=\{b_i\in L \,|\, i\in I\}$ be a normalized $K$-valuation basis of $L$. Then  
\begin{enumerate}
\item there are subsets $X,Y\subseteq B$ such that:  
\begin{enumerate}
\item[(i)] $v(X)$ forms a system of representatives of the cosets of $vL$ modulo $vK$ and there is a bijection between $X$ and $v(X)$;
\item[(ii)] the image of $Y$ under the residue map forms a basis of the extension $Lv|Kv$ and there is a bijection between $Y$ and $\res(Y)$,  
\end{enumerate}
\item if $L|K$ is a finite extension and $X,Y$ are as in part $(1)$, then the set $B':=\{xy\mid x\in X, y\in Y\}$ is a $K$-valuation basis of $L$. In particular, if $1\in X$, $1\in Y$, then $B'$ is a standard $K$-valuation basis of $(L|K,v)$.
\end{enumerate}
\end{lemma}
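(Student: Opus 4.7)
The plan is to build $X$ and $Y$ directly from $B$, and then use Lemma~\ref{algvind_1} together with a cardinality count to show that $B' := \{xy\mid x\in X,\, y\in Y\}$ is a $K$-valuation basis (standard when $1\in X\cap Y$).

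\emph{Constructing $X$ and $Y$.} For (i), Lemma~\ref{val_basis_propert}(1) gives $vL = vK + v(B)$, so every coset of $vL/vK$ is hit by some $v(b)$ with $b\in B$; condition (N1) then forces all elements of $B$ whose values lie in one coset to share the same value. Picking, for each coset $\alpha\in vL/vK$, one $b_\alpha\in B$ with $v(b_\alpha)\in\alpha$, I would set $X:=\{b_\alpha\}$; the set $v(X)$ is then a system of coset representatives and $X\to v(X)$ is automatically bijective. For (ii), set $Y:=\{b\in B\mid v(b)=0\}$. Lemma~\ref{val_basis_propert}(2) with $a=1$ yields that $\res(Y)=\res(B,1)$ generates $Lv$ over $Kv$, while (N2) applied to any $b\in Y$ asserts that $\{\res(b'/b)\mid b'\in Y\}$ is $Kv$-linearly independent; rescaling by the unit $\res(b)$, so is $\res(Y)$. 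Hence $\res(Y)$ is a $Kv$-basis of $Lv$, its elements are pairwise distinct and nonzero, and $Y\to\res(Y)$ is a bijection.

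\emph{Valuation independence and cardinality of $B'$.} By construction $v(X)$ lies in pairwise distinct cosets modulo $vK$, $Y\subseteq\OO_L$ (since $v(y)=0$), and $\Res(Y,1)=\res(Y)$ is $Kv$-linearly independent, so Lemma~\ref{algvind_1} gives that $B'$ is $K$-valuation independent, and standard when $1\in X\cap Y$. The product map $X\times Y\to B'$, $(x,y)\mapsto xy$, is injective: if $xy=x'y'$ then $v(x)=v(x')$ (as $v(y)=v(y')=0$), which forces $x=x'$ by the defining property of $X$, and then $y=y'$. Thus $|B'|=|X|\cdot|Y|$.

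\emph{$B'$ is a basis.} Assume $L|K$ is finite. I would conclude by proving $|B'|=[L:K]$; together with the $K$-linear independence of $B'$ (via Remark~\ref{rmk:valInd}), this makes $B'$ a $K$-basis and hence a $K$-valuation basis of $L$. For the upper bound, partition $B=\bigsqcup_{x\in X} B_x$ where $B_x:=\{b\in B\mid v(b)=v(x)\}$; by (N1) and the construction of $X$, every $b\in B$ lies in a unique $B_x$. For each $x$, (N2) applied to $b=x$ shows that $\Res(B,x)=\{\res(b/x)\mid b\in B_x\}$ is $Kv$-linearly independent in $Lv$, so $|B_x|\leq\dim_{Kv}Lv=|Y|$; summing, $|B|\leq|X|\cdot|Y|$. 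For the reverse, the $K$-linear independence of $B'$ gives $|X|\cdot|Y|=|B'|\leq[L:K]=|B|$. Combining, $|B|=|X|\cdot|Y|=|B'|=[L:K]$. The delicate point will be precisely this cardinality argument: it must avoid a circular appeal to Proposition~\ref{char_defectless}, and the idea is to squeeze $|B|$ between $|X|\cdot|Y|$ (extracted from (N1)+(N2)) and $|B'|$ (from linear independence of $B'$) while using only that $B$ itself is already known to be a $K$-basis.
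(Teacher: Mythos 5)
Your proposal is correct and follows essentially the same route as the paper's proof: extracting $X$ via Lemma~\ref{val_basis_propert}(1) and (N1), taking $Y=\{b\in B\mid v(b)=0\}$ via Lemma~\ref{val_basis_propert}(2) and (N2), invoking Lemma~\ref{algvind_1} and the injectivity of the product map to get $|B'|=|X|\cdot|Y|$, and then squeezing $[L:K]=|B|=\sum_x|B_x|\leq|X|\cdot|Y|=|B'|\leq[L:K]$ exactly as in the paper. The only cosmetic difference is that you phrase the conclusion as $|B'|=[L:K]$ plus linear independence, while the paper phrases it as $\dim_K\Span_K(B')\geq[L:K]$; these are the same argument.
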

\begin{proof}
From part $(1)$ of Lemma~\ref{val_basis_propert} we infer that $v(L)= v(K)+v(B)$. This shows the existence of the set $X$. Part $(2)$ of Lemma~\ref{val_basis_propert} implies that 
\[
Lv= \res(L,1)=\Span_{Kv}(\res(B,1)),
\]
which proves the existence of the set $Y$. This shows part $(1)$. 

Assume now that $L|K$ is a finite extension. Set $e=(vL:vK)$ and $f=[Lv:Kv]$. By part $(1)$, we have that $\vert X\vert = e$ and $\vert Y\vert=f$. By Lemma \ref{algvind_1}, the set $B'$ is $K$-valuation independent. Let us show that $\vert B'\vert= ef$. Clearly $\vert B'\vert\leqslant ef$. Now for $i=1,2$, let $x_i\in X$ and $y_i\in Y$ be such that $x_1y_1=x_2y_2$. This implies that $v(x_1)=v(x_1y_2)=v(x_2y_2)=v(x_2)$, and since we have a bijection between $X$ and $v(X)$, we must have that $x_1=x_2$. Since $L$ is a field, this implies that $y_1=y_2$, showing that $\vert B'\vert=\vert X\vert\vert Y\vert=ef$. 

It remains to show that $V:=\Span_K(B')=L$. Take any element $b$ of $B$. Then there is $x\in X$ such that $v(b)\in v(x)+vK$. By condition (N1) this means that $v(b)=v(x)$. Hence, $B$ is the disjoint union of the sets $B_x=\{b\in B\mid v(b)=v(x)\}$ where $x$ ranges in $X$. Fix $x\in X$ and let $b_1,b_2\in B_x$. Condition (N2) implies that $\res(\frac{b_1}{x})\neq \res(\frac{b_2}{x})$. Thus 
\[
\vert B_x\vert =\left\vert\left\{\res\left(\frac{b}{x}\right)\mid v(b)\in B_x\right\}\right\vert= \vert \res(B,x)\vert \leqslant \vert Y\vert=e. 
\]
This shows that
\[
[L:K]=|B|= \sum_{x\in X}|B_x| \leqslant e\cdot f = |B'|=\dim_KV,
\]
and consequently that $V=L$. The last assertion follows immediately. This shows part (2). 
\end{proof}

Note that if $\{b_i\, |\, i\in I\}$ is a $K$-valuation basis of $L$, then for any nonzero element $b$ of $L$ the set 
$\{\frac{b_i}{b}\, |\, i\in I\}$ is also a $K$-valuation basis of $L$. Thus if $L$ admits a $K$-valuation basis, then it admits also a $K$-valuation basis containing the unity. 

Assume that $L|K$ is a finite extension and $L$ admits a $K$-valuation basis $B$. Lemma~\ref{lem:existence-normalized} yields that $L$ admits a normalized $K$-valuation basis $B'$. Moreover, since we can assume that $B$ contains $1$, without loss of generality we can assume that $B'$ also contains $1$ (cf. the proof of part (1) of Lemma~\ref{valind_relations}). Hence, by part $(2)$ of Lemma~\ref{valind_relations} we obtain that $L$ admits a standard $K$-valuation basis. In view of Lemma \ref{algvind_1} we just proved the following corollary:

\begin{corollary} \label{valbasis_stand}
Assume that the extension $(L|K,v)$ is finite. Then $L$ admits a $K$-valuation basis if and only if it admits a standard $K$-valuation basis.
\end{corollary}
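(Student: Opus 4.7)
The plan is to prove the two directions separately, leveraging the normalization machinery already built up.

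For the easy direction, suppose $L$ admits a standard $K$-valuation basis $B$. By Lemma~\ref{algvind_1}, $B$ is $K$-valuation independent, and by assumption $B$ is a $K$-basis of $L$, hence $B$ is a $K$-valuation basis of $L$.

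For the converse, suppose $L$ admits a $K$-valuation basis $B$. First I would reduce to the case $1\in B$: choose any $b_0\in B$ and replace $B$ by $\{b/b_0 \mid b\in B\}$, which is again a $K$-valuation basis by part~(3) of Lemma~\ref{val_basis_propert} and contains $1$. Next, I would apply Lemma~\ref{lem:existence-normalized} to produce scalars $c_i\in K^{\times}$ such that $B':=\{c_ib_i\}$ is a normalized $K$-valuation independent set; since each $c_i\neq 0$, $B'$ remains a $K$-basis of $L$, so it is a normalized $K$-valuation basis. Moreover, inspecting the construction in Lemma~\ref{lem:existence-normalized} for the element $b_i=1$: one has $v(1)=0\in vK$, so the element falls under the last case, and with $x_i=1$ one gets $\res(x_ib_i)=\res(1)=1=\res(1)$, so the scalar chosen is $c_i=1$. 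Thus we may arrange $1\in B'$.

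Finally I would invoke part~(2) of Lemma~\ref{valind_relations}. Since $v(1)=0$ lies in the trivial coset of $vL/vK$ and $\res(1)=1$ is nonzero in $Lv$, one can select the subsets $X,Y\subseteq B'$ of part~(1) so that $1\in X$ and $1\in Y$ (we have freedom in picking coset representatives and residue basis elements). Then, by the ``in particular'' clause of part~(2) of Lemma~\ref{valind_relations}, the set $\{xy \mid x\in X, y\in Y\}$ is a standard $K$-valuation basis of $(L|K,v)$, completing the proof.

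The argument is essentially bookkeeping: the only delicate point, and the one I would pay most attention to, is confirming at each step (scaling by $b_0^{-1}$, normalizing via Lemma~\ref{lem:existence-normalized}, and selecting $X$ and $Y$ in Lemma~\ref{valind_relations}) that the element $1$ can be preserved in the constructed set, so that the final basis satisfies the defining conditions $1\in X$ and $1\in Y$ of a standard $K$-valuation basis.
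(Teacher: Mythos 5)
Your proof is correct and follows essentially the same route as the paper: reduce to a basis containing $1$ via scaling, normalize with Lemma~\ref{lem:existence-normalized} while checking that $1$ survives the construction, and then invoke part~(2) of Lemma~\ref{valind_relations} with $1\in X$ and $1\in Y$, using Lemma~\ref{algvind_1} for the converse direction. The only difference is that you verify explicitly the preservation of $1$ under normalization, which the paper leaves as a remark.
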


We finish this section with the following characterization of immediate extensions. 

\begin{proposition}\label{pro:char-immediate}
Take a valued field extension $(M|K,v)$. Then the following are equivalent:
\begin{enumerate}
\item $(M,v)$ is an immediate extension, 
\item for every subset $\{b_1,\ldots,b_n\}$ of some valued field extension of $(M,v)$, if $b_1,\ldots,b_n$ are $K$-valuation independent, then they are also $M$-valuation independent. 
\end{enumerate}
\end{proposition}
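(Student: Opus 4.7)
The plan is to prove the two directions separately, the substance lying in $(1) \Rightarrow (2)$. For that direction, I would take $b_1, \ldots, b_n$ in some valued field extension $N$ of $M$ that are $K$-valuation independent, and verify that for every $d_1, \ldots, d_n \in M$ not all zero, $v(\sum_i d_i b_i) = \min_i v(d_i b_i)$. After discarding the indices where $d_i = 0$, the key step is to approximate each $d_i \in M^\times$ by some $c_i \in K^\times$ satisfying $v(c_i) = v(d_i)$ and $v(d_i - c_i) > v(d_i)$. Such a $c_i$ is produced in two stages: first use $vM = vK$ to pick $c_i^{(0)} \in K^\times$ with $v(c_i^{(0)}) = v(d_i)$, then use $Mv = Kv$ to choose $a_i \in \OO_K^\times$ whose residue equals $\res(d_i/c_i^{(0)})$, and set $c_i := a_i c_i^{(0)}$. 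Writing $\sum_i d_i b_i = \sum_i c_i b_i + \sum_i (d_i - c_i) b_i$, the $K$-valuation independence of the $b_i$ gives $v(\sum_i c_i b_i) = \min_i v(c_i b_i) = \min_i v(d_i b_i)$, while the ultrametric inequality together with $v((d_i - c_i) b_i) > v(d_i b_i)$ forces $v(\sum_i (d_i - c_i) b_i) > \min_i v(d_i b_i)$. The two summands have distinct values, so the total value coincides with that of the first.

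For $(2) \Rightarrow (1)$, I would argue contrapositively. If $(M|K,v)$ is not immediate, then either $vK \subsetneq vM$ or $Kv \subsetneq Mv$. In the first case, fix $m \in M$ with $v(m) \notin vK$; in the second, fix $m \in \OO_M$ with $\res(m) \notin Kv$ (so in particular $v(m) = 0$). In both cases $\{1, m\} \subset M$ is $M$-linearly dependent, hence not $M$-valuation independent by Remark~\ref{rmk:valInd}. Yet it is $K$-valuation independent: when $v(m) \notin vK$, for all $a, b \in K^\times$ the values $v(a)$ and $v(bm)$ are unequal since only one lies in $vK$, and one concludes by the ultrametric inequality; when $\res(m) \notin Kv$ and $v(a) = v(bm)$ with $a, b \in K^\times$, dividing by $b$ shows that $\res(a/b + m)$ cannot vanish, for otherwise $\res(m) = -\res(a/b) \in Kv$, so $v(a + bm) = v(b) = \min(v(a), v(bm))$.

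The main (modest) obstacle is the two-step approximation in the first direction: both $vM = vK$ and $Mv = Kv$ are needed, since matching values alone leaves a residue discrepancy and matching residues presupposes that values have already been aligned. Everything else reduces to the ultrametric inequality and the definition of valuation independence.
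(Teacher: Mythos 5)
Your proof is correct, but both directions take a different route from the paper's. For $(1)\Rightarrow(2)$, the paper first replaces $b_1,\ldots,b_n$ by a \emph{normalized} $K$-valuation independent set $c_1b_1,\ldots,c_nb_n$ (via Lemma~\ref{lem:existence-normalized}), observes that conditions (N1)--(N4) are literally unchanged when read over $(M,v)$ because $vK=vM$ and $Kv=Mv$, and then invokes Lemma~\ref{almost_normalized} to conclude $M$-valuation independence. You instead leave the $b_i$ alone and approximate the coefficients: each $d_i\in M^\times$ is replaced by $c_i\in K^\times$ with $v(d_i-c_i)>v(d_i)$, which is exactly the two-stage matching underlying Lemma~\ref{lem:immediate_valdep}, and the ultrametric inequality finishes the job. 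Your version is more self-contained and avoids the normalization machinery; the paper's version reuses that machinery (which it needs elsewhere anyway) and makes the invariance of (N1)--(N4) under immediate extensions the conceptual point. For $(2)\Rightarrow(1)$, the paper produces its $K$-valuation independent, $M$-valuation dependent pair as $\{a-b,1\}$ where $v(a-b)=\max v(a-K)$, leaning on Theorem~\ref{charact_immediate} and Corollary~\ref{lem:1-ext-variant}; you split directly into the value-group and residue-field cases and verify $K$-valuation independence of $\{1,m\}$ by hand, which is more elementary and bypasses the Kaplansky-based characterization. Both of your arguments are complete as sketched; the only points worth making explicit in a write-up are that zero coefficients are harmless (value $\infty$ does not affect the minimum) and that in each case of the converse $m\neq 1$, so $\{1,m\}$ really is a two-element, $M$-linearly dependent set.
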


\begin{proof}
Suppose $(M|K,v)$ is an immediate extension and let $B=\{b_1,\ldots,b_n\}$ be a $K$-valuation independent set. By Lemma \ref{valind_relations}, there are $c_1,\ldots,c_n\in K^\times$ such that $B'=\{c_1b_1,\ldots,c_nb_n\}$ is a normalized $K$-valuation independent set. It is enough to show that $B'$ is $M$-valuation independent. Indeed, if it is, then for $a_1,\ldots,a_n\in M$ we have that 
\[
v\left( \sum_{i=1}^n a_ib_i \right)= v\left( \sum_{i=1}^n (a_ic_i^{-1}) c_ib_i \right)=\min_{1\leqslant i \leqslant n} \{v((a_ic_i^{-1}) c_ib_i)\}=\min_{1\leqslant i \leqslant n}\{v(a_ib_i)\}. 
\] 
Since $(M|K,v)$ is immediate, we have that $vK=vM$ and that $Kv=Mv$, hence $B'$ also satisfies all properties (N1)-(N4) with respect to the valued field $(M,v)$. Therefore, by Lemma \ref{almost_normalized}, $B'$ is $M$-valuation independent. 

For the converse, suppose $(M|K,v)$ is not an immediate extension and let $a\in M\setminus K$ be such that $\max v(a-K) = v(a-b)$ for some $b\in K$ (by Lemma \ref{lem:vvs_immediate} or Theorem \ref{charact_immediate}). Since $\{1\}$ is a $K$-valuation independent set, by Corollary \ref{lem:1-ext-variant}, $\{a-b,1\}$ is also $K$-valuation independent. Nevertheless, $\{a-b,1\}$ is not $M$-valuation independent as 
\[
v((a-b)^{-1}(a-b)+(-1)(1))=v(0)>\min(v(a-b),v(1)), 
\] 
which completes the proof.
\end{proof}


\section{Defectless and $vs$-defectless extensions}\label{sec:defectext}

Let $(L|K,v)$ be a valued field extension and suppose that the field extension is finite. Lemma \ref{algvind_1} shows in particular that 
\begin{equation} \label{fund_inequality} \stepcounter{eqn}\tag{E\arabic{eqn}} 
[L:K]\geq (vL:vK)[Lv:Kv].
\end{equation} 
In fact, if $v=v_1,\ldots, v_m$ are the distinct extensions of the valuation $v$ on $K$ to the field $L$, the so-called \emph{Lemma of Ostrowski} (see~\cite[Chapter VI, \S 12, Corollary to Theorem~25]{zariski-samuel}) establishes that 
\begin{equation*}
\label{ostrowski}
[L:K]=\sum_{i=1}^m p^{n_i}(v_iL:v_iK)[Lv_i:Kv_i], 
\end{equation*}		 
where $p$ denotes the \emph{characteristic exponent} of $Kv$ (that is,  $p=$ char$ Kv$ if it is positive and $p=1$ otherwise) and for each $i\in\{1,\ldots,m\}$, $n_i$ is a non-negative integer. The factor $p^{n_i}$ is called the \emph{defect} of the valued field extension $(L|K,v_i)$. If $p^{n_i}=1$ for all $i\in\{1,\ldots,m\}$, we say that \emph{$L$ is a defectless field extension of $(K,v)$}. Otherwise we speak of a \emph{defect extension}. We will center our study of defectless extensions to the particular case where $m=1$, that is, where the valuation $v$ extends uniquely to $L$. Note that every subextension of a finite defectless extension is again defectless. 

An infinite algebraic extension $(L|K,v)$ such that the valuation $v$ admits a unique extension from $K$ to $L$ is called \emph{defectless} if every finite subextension $(F|K,v)$ of $(L|K,v)$ is defectless. Observe that if $(F|K,v)$ is a subextension of an infinite defectless extension $(L|K,v)$, then $(F|K,v)$ is defectless. However, the extension $(L|F,v)$ may not be defectless. 

A valued field $(K,v)$ is called \emph{defectless} if every finite extension of $K$ is defectless. In particular, any valued field $(K,v)$ of residue characteristic zero is defectless. If $(K,v)$ is a henselian defectless field, then it is called \emph{algebraically complete}.

The following proposition shows the relation between defectless and $vs$-defectless extensions (see Definition \ref{def:vsdefect}).

\begin{proposition} \label{char_defectless}
Assume that the extension $(L|K,v)$ is finite. Then the following conditions are equivalent.
\begin{enumerate}
\item $[L:K]=(vL:vK)[Lv:Kv]$,
\item $(L|K,v)$ admits a standard $K$-valuation basis,
\item $(L|K,v)$ admits a $K$-valuation basis, 
\item $(L|K,v)$ is a $vs$-defectless extension.
\end{enumerate}
\end{proposition}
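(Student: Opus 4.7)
The strategy is to exploit the machinery of Section \ref{sec:valind}, so that most of the implications follow formally from results already established, and the only genuine content lies in connecting condition (1) with the existence of a standard valuation basis.

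\emph{Trivial linkings.} The equivalence $(2)\Leftrightarrow(3)$ is immediate: every standard $K$-valuation basis is, by Lemma \ref{algvind_1}, a $K$-valuation basis, and the converse is exactly Corollary \ref{valbasis_stand}. For $(3)\Leftrightarrow(4)$, the direction $(4)\Rightarrow(3)$ is obvious, since $L$ is a finitely generated $K$-vector subspace of itself. Conversely, if $L$ admits a $K$-valuation basis $B$ over $\{0\}$, then Lemma \ref{lem:val_bas_subext} applied with $V=L$, $W=\{0\}$ and $W'$ an arbitrary finite-dimensional $K$-subspace of $L$ produces a $K$-valuation basis of $W'$, so $(L|K,v)$ is $vs$-defectless.

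\emph{The core step $(1)\Rightarrow(2)$.} Set $e:=(vL:vK)$ and $f:=[Lv:Kv]$. I would pick $X=\{x_1,\ldots,x_e\}\subseteq L^\times$ with $1\in X$ and such that $v(x_1),\ldots,v(x_e)$ form a complete set of representatives of $vL/vK$, and $Y=\{y_1,\ldots,y_f\}\subseteq\OO_L$ with $1\in Y$ and $\res(y_1),\ldots,\res(y_f)$ a $Kv$-basis of $Lv$ (in particular $v(y_j)=0$). By Lemma \ref{algvind_1}, the set $B:=\{x_iy_j\mid 1\leqslant i\leqslant e,\ 1\leqslant j\leqslant f\}$ is $K$-valuation independent, hence $K$-linearly independent by Remark \ref{rmk:valInd}. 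The product map $(i,j)\mapsto x_iy_j$ is injective: from $x_iy_j=x_{i'}y_{j'}$ and $v(y_j)=v(y_{j'})=0$, we first obtain $v(x_i)=v(x_{i'})$ and hence $i=i'$, and then $y_j=y_{j'}$. Thus $|B|=ef=[L:K]$ by hypothesis (1), which forces $B$ to be a $K$-basis of $L$ and therefore a standard $K$-valuation basis.

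\emph{The reverse step $(2)\Rightarrow(1)$.} Assume that $B=\{xy\mid x\in X,\,y\in Y\}$ is a standard $K$-valuation basis of $L$. By the defining conditions of $X$ and $Y$ we have $|X|\leqslant(vL:vK)$ and $|Y|\leqslant[Lv:Kv]$, so
\[
[L:K]=|B|\leqslant |X|\cdot|Y|\leqslant (vL:vK)[Lv:Kv].
\]
Combined with the fundamental inequality \eqref{fund_inequality} this forces equality throughout, giving (1).

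\emph{Expected obstacle.} I do not anticipate any real obstruction, as the heavy lifting has already been done in Section \ref{sec:valind}. The only point demanding a small verification is the injectivity of $(i,j)\mapsto x_iy_j$ in $(1)\Rightarrow(2)$, which is a direct consequence of the normalization $v(y_j)=0$ together with the distinctness of $v(x_i)$ modulo $vK$.
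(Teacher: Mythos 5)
Your proposal is correct and uses essentially the same ingredients as the paper's proof: Lemma \ref{algvind_1} for $(1)\Rightarrow(2)$, Corollary \ref{valbasis_stand} to pass from a valuation basis to a standard one, and Lemma \ref{lem:val_bas_subext} for $vs$-defectlessness; the only difference is organizational (you prove pairwise equivalences where the paper runs the cycle $1\to2\to3\to4\to1$). Your explicit check that $(i,j)\mapsto x_iy_j$ is injective, and your use of the fundamental inequality \eqref{fund_inequality} in $(2)\Rightarrow(1)$, are small points the paper treats more tersely but do not change the argument.
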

\begin{proof}
$(1)\to (2):$ Set $e=(vL:vK)$, $f=[Lv:Kv]$ and assume that $[L:K]=ef$. Let $1=x_1,\ldots,x_e$ be representatives of the cosets of $vL$ modulo $vK$ and $1=y_1,\ldots,y_f\in \OO_L$  be such that their residues form a basis of the extension $Lv|Kv$. By Lemma \ref{algvind_1}, $\{x_iy_j\, |\, 1\leqslant i\leqslant e, 1\leqslant j\leqslant f\}$ forms a standard $K$-valuation independent set. Since the set contains $ef=[L:K]$ elements, it is a basis of $L|K$, which proves $(2)$.

$(2)\to (3):$  Trivial.

$(3)\to (4):$  This follows by Lemma \ref{lem:val_bas_subext}. 

$(4)\to (1):$  Since the extension is finite, $L$ admits a $K$-valuation basis. Then by Corollary~\ref{valbasis_stand}, it admits a standard $K$-valuation basis $B$. From the definition of the standard $K$-valuation independent set it follows that \mbox{$|B|=(vL:vK)[Lv:Kv]$.} As $B$ is a basis of $L$ as a $K$-vector space, $(1)$ holds. 
\end{proof}
Note that condition $(1)$ of the previous Proposition states that the valuation $v$ extends in a unique way from $K$ to $L$ and  that $(L|K,v)$ is defectless. 

\begin{corollary}\label{cor:vs-def-def}
Assume that the extension $L|K$ is algebraic. Then $(L|K,v)$ is $vs$-defectless if and only if $v$ extends in a unique way from $K$ to $L$ and $(L|K,v)$ is defectless. 
\end{corollary}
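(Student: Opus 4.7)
The plan is to reduce everything to the finite case by invoking Proposition~\ref{char_defectless}, which already establishes the equivalence between $vs$-defectlessness, admitting a standard $K$-valuation basis, admitting a $K$-valuation basis, and the fundamental equality $[L:K]=(vL:vK)[Lv:Kv]$ for \emph{finite} extensions. The remark following Proposition~\ref{char_defectless} notes that this fundamental equality is precisely the conjunction of ``$v$ extends uniquely from $K$ to $L$'' and ``$(L|K,v)$ is defectless''. So the corollary is essentially a matter of reducing the algebraic case to finite subextensions.

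For the forward direction, assume $(L|K,v)$ is $vs$-defectless and let $F|K$ be an arbitrary finite subextension of $L|K$. Every finite-dimensional $K$-subspace of $F$ is in particular a finite-dimensional $K$-subspace of $L$, so by hypothesis it admits a $K$-valuation basis. Thus $(F|K,v)$ is itself $vs$-defectless, and by Proposition~\ref{char_defectless} applied to the finite extension $F|K$, one obtains $[F:K]=(vF:vK)[Fv:Kv]$. By the remark after Proposition~\ref{char_defectless}, this means $v$ extends uniquely from $K$ to $F$ and $(F|K,v)$ is defectless. Since every element of $L$ lies in some such $F$, the valuation $v$ extends uniquely from $K$ to $L$; and by the very definition of defectlessness for infinite algebraic extensions, $(L|K,v)$ is defectless.

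For the converse, suppose $v$ extends uniquely from $K$ to $L$ and $(L|K,v)$ is defectless. Let $V$ be a finitely generated $K$-vector subspace of $L$, say $V=\Span_K(a_1,\ldots,a_n)$, and set $F:=K(a_1,\ldots,a_n)$, a finite subextension of $L|K$. Any extension of $v|_K$ to $F$ extends in turn to some valuation on $L$, which by unique extension must agree with $v$; hence $v$ also extends uniquely from $K$ to $F$. Combined with defectlessness of $(F|K,v)$, this gives $[F:K]=(vF:vK)[Fv:Kv]$, so by Proposition~\ref{char_defectless} the extension $(F|K,v)$ is $vs$-defectless. In particular, $V\subseteq F$ admits a $K$-valuation basis. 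Since $V$ was an arbitrary finitely generated $K$-vector subspace of $L$, this shows that $(L|K,v)$ is $vs$-defectless.

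There is no real obstacle here: the proof is essentially bookkeeping, with the content entirely packaged in Proposition~\ref{char_defectless}. The only point requiring a line of attention is the unique extension clause in the converse direction, for which one just needs to observe that unique extension from $K$ to $L$ descends to unique extension from $K$ to any intermediate field.
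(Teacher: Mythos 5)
Your proof is correct and is precisely the argument the paper compresses into its one-line proof: reduce to finite subextensions and apply Proposition~\ref{char_defectless} together with the definition of defectlessness for infinite algebraic extensions. The two points you single out (uniqueness of the extension passing between $L$ and its finite subextensions in both directions) are handled correctly, so nothing is missing.
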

\begin{proof}
The assertion follows directly from Proposition~\ref{char_defectless} together with the definition of defectless extension in the case of infinite algebraic extensions of valued fields. 
\end{proof}

Note that the above corollary yields the following characterization of algebraically complete fields.
\begin{corollary}
A valued field $(K,v)$ is algebraically complete if and only if every finite extension of $K$ admits a $K$-valuation basis. 
\end{corollary}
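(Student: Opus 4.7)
The plan is to derive this corollary as an almost immediate consequence of Proposition~\ref{char_defectless} (or equivalently Corollary~\ref{cor:vs-def-def}), together with the unwinding of the definition of algebraically complete (namely, henselian plus defectless) and Ostrowski's lemma stated at the beginning of Section~\ref{sec:defectext}.

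For the forward direction, suppose $(K,v)$ is algebraically complete, and let $(L|K,v)$ be any finite valued field extension. Since $(K,v)$ is henselian, $v$ is the unique extension of $v|_K$ to $L$, so in Ostrowski's formula we have $m=1$; since $(K,v)$ is defectless, the unique defect factor $p^{n_1}$ equals $1$. This yields condition~(1) of Proposition~\ref{char_defectless}, so $(L|K,v)$ admits a $K$-valuation basis.

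For the converse, assume that for every finite field extension $L|K$ and every extension of $v$ to $L$, the resulting valued field extension $(L|K,v)$ admits a $K$-valuation basis. I first establish henselianness. Let $L|K$ be a finite algebraic extension and let $v_1,\ldots,v_m$ be the distinct extensions of $v|_K$ to $L$. Applying the hypothesis to each $(L,v_i)$ and invoking the implication $(3)\Rightarrow (1)$ of Proposition~\ref{char_defectless}, we obtain $[L:K]=(v_iL:v_iK)[Lv_i:Kv_i]$ for every $i$. On the other hand, Ostrowski's lemma gives
\[
[L:K]=\sum_{i=1}^m p^{n_i}(v_iL:v_iK)[Lv_i:Kv_i],
\]
and every summand is at least $(v_iL:v_iK)[Lv_i:Kv_i]\geq 1$. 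Forcing this sum to coincide with $(v_1L:v_1K)[Lv_1:Kv_1]$ compels $m=1$ and $p^{n_1}=1$. The former yields that $v|_K$ extends uniquely to every finite, hence every algebraic, extension of $K$, so $(K,v)$ is henselian; the latter shows that each such finite extension is defectless, so $(K,v)$ is defectless. Thus $(K,v)$ is algebraically complete.

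The argument is essentially a bookkeeping exercise on the definitions; the only point requiring a touch of care is the henselianness step, where one must apply the hypothesis separately to each extension $v_i$ and then combine the resulting equalities with Ostrowski's lemma to rule out $m\geq 2$. No substantial obstacle is anticipated.
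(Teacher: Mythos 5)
Your proof is correct and follows essentially the same route as the paper, which leaves this corollary as an immediate consequence of Proposition~\ref{char_defectless} (via Corollary~\ref{cor:vs-def-def}) and the definitions of defectless and algebraically complete fields. Your explicit Ostrowski-summation argument ruling out $m\geq 2$ in the converse is just an unwinding of the standard equivalence between henselianity and uniqueness of the extension of $v$ to every finite extension, which the paper takes for granted.
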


Let us now present and prove the main theorem of this section. 

\begin{theorem}\label{thm:main} Consider the following properties of a valued field extension $(L|K,v)$:
\begin{enumerate}[(A)]
\item the extension $(L|K,v)$ is $vs$-defectless;
\item for every finitely generated $K$-vector space $W\subseteq L$ and any element $b\in L$, $v(b-W)$ has a maximal element. 
\item $L$ is linearly disjoint over $K$ to every immediate extension $(M,v)$ of $(K,v)$, both $L$ and $M$ being contained in a common valued field extension. 
\end{enumerate}
Then $(A)\Leftrightarrow (B) \Rightarrow (C)$. 
\end{theorem}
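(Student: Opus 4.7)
My plan is to establish the three implications $(B)\Rightarrow(A)$, $(A)\Rightarrow(B)$, and $(A)\Rightarrow(C)$ separately, leaning on the structural results assembled in Section \ref{sec:valind}.

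For $(B)\Rightarrow(A)$, I would build a $K$-valuation basis of an arbitrary finitely generated $K$-subspace $V\subseteq L$ inductively on its dimension. Fix an ordinary $K$-linear basis $e_1,\ldots,e_n$ of $V$ and set $b_1:=e_1$. Given that $\{b_1,\ldots,b_i\}$ is already $K$-valuation independent and satisfies $W_i:=\Span_K(b_1,\ldots,b_i)=\Span_K(e_1,\ldots,e_i)$, property (B) applied to $e_{i+1}$ and $W_i$ provides $a_i\in W_i$ realising $\max v(e_{i+1}-W_i)$. Setting $b_{i+1}:=e_{i+1}-a_i$, Corollary \ref{lem:1-ext-variant} extends $\{b_1,\ldots,b_i\}$ to the $K$-valuation independent set $\{b_1,\ldots,b_{i+1}\}$ with the correct span. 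After $n$ steps we obtain a $K$-valuation basis of $V$.

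For $(A)\Rightarrow(B)$, fix a finitely generated $K$-subspace $W\subseteq L$ and $b\in L$. If $b\in W$ then $v(b-b)=\infty$ is trivially the maximum of $v(b-W)$, so assume $b\notin W$. The subspace $V:=W+Kb$ is finitely generated, hence by (A) admits a $K$-valuation basis (in the absolute sense). I would then invoke Lemma \ref{lem:val_bas_subext} with the chain $\{0\}\subseteq W\subseteq V$; since $\dim_KW$ is finite, that lemma upgrades the absolute valuation basis of $V$ to a $K$-valuation basis of $V$ over $W$. Corollary \ref{cor:Kvalbasis} then immediately implies that $v(b-W)$ has a maximum, as required.

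For $(A)\Rightarrow(C)$, let $(M,v)$ be any immediate extension of $(K,v)$, with $L$ and $M$ embedded in a common valued field $(N,w)$. Choose $K$-linearly independent $a_1,\ldots,a_n\in L$; by (A) the subspace $V:=\Span_K(a_1,\ldots,a_n)$ admits a $K$-valuation basis $\{b_1,\ldots,b_n\}$. Since $(M|K,v)$ is immediate, Proposition \ref{pro:char-immediate} lifts the $K$-valuation independence of the $b_j$'s inside $N$ to $M$-valuation independence, and hence (Remark \ref{rmk:valInd}) to $M$-linear independence. As the $a_i$'s and the $b_j$'s are related by an invertible matrix over $K$, the $a_i$'s remain $M$-linearly independent, which is exactly the linear disjointness of $L$ and $M$ over $K$.

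The main obstacle is the step $(A)\Rightarrow(B)$: property (A) only provides valuation bases of finitely generated subspaces in the absolute sense, while (B) requires the existence of maxima relative to an arbitrary finitely generated subspace $W$. The crucial observation is that Lemma \ref{lem:val_bas_subext} already encodes the passage from an absolute $K$-valuation basis to a $K$-valuation basis over $W$ when its inner subspace is specialised to $\{0\}$; once this is noticed, Corollary \ref{cor:Kvalbasis} finishes the argument.
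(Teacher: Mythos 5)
Your proposal is correct and follows essentially the same route as the paper's proof: the same induction via Corollary \ref{lem:1-ext-variant} for $(B)\Rightarrow(A)$, the same reduction through Lemma \ref{lem:val_bas_subext} (and Corollary \ref{cor:Kvalbasis}, which the paper reaches via Lemma \ref{lem:vbasis_max}) for $(A)\Rightarrow(B)$, and the same use of Proposition \ref{pro:char-immediate} with Remark \ref{rmk:valInd} for $(A)\Rightarrow(C)$. No gaps.
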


\begin{proof}
$(A)\Rightarrow(B)$ Let $W$ be a finitely generated $K$-vector subspace of $L$ and $b\in L$. If $b\in W$, then $\infty$ is the maximal element of $v(b-W)$. So suppose that $b\notin W$ and set $V:=W\oplus Kb$. By assumption, $V$ has a $K$-valuation basis. By Lemma \ref{lem:val_bas_subext}, $V$ has a valuation basis over $W$. Therefore, $v(b-W)$ has a maximal element by Lemma \ref{lem:vbasis_max}. 

 $(B)\Rightarrow(A)$ Let $V\subseteq L$ be a $K$-vector space of dimension $n\geq 1$. We prove that $V$ has a $K$-valuation basis by induction on $n$. For $n=1$ there is nothing to prove as any non-zero vector of $V$ is a $K$-valuation basis. Suppose that $\dim(V)=n+1$ and let $W\subseteq V$ be a subspace of dimension $n$. Let $B=\{b_1,\ldots,b_n\}$ be a valuation basis of $W$ and $a\in V\setminus W$. By assumption, there exists $w\in W$ such that $v(a-w)=\max v(a-W)$. Then, by Corollary \ref{lem:1-ext-variant}, the set $\{b_1,\ldots,b_{n+1}\}$ where $b_{n+1}=a-w$, is $K$-valuation independent, which completes the proof. 

$(A)\Rightarrow (C)$ Let $B\subseteq L$ be a finite $K$-linearly independent set. To show that $B$ is $M$-linearly independent it is enough to show that there is an $M$-linearly independent basis of $W:=\Span_K(B)$. By assumption, $W$ has a $K$-valuation basis $B'$. Then by Proposition \ref{pro:char-immediate} $B'$ is also $M$-valuation independent, hence $M$-linearly independent by Remark \ref{rmk:valInd}. 
\end{proof}

We finish this section by showing that the implication $(C)\Rightarrow (A)$ does not hold in general. Recall that a valued field $(K,v)$ is said to be \emph{algebraically maximal} if it does not admit any proper immediate algebraic extension. 

\begin{proposition}\label{prop:notCA}
Take a valued field $(K,v)$ which is not algebraically maximal. Then there exists a simple transcendental extension 
$(K(x)|K,v)$ which is linearly disjoint from each maximal immediate extension of $(K,v)$ in every common valued 
field extension although the $K$-vector space generated by $1$ and $x$ does not admit a $K$-valuation basis.
\end{proposition}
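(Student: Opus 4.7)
The approach is to construct $x$ as a perturbation $x:=a+\epsilon$, where $a$ lies in a proper immediate algebraic extension of $(K,v)$ (which exists since $K$ is not algebraically maximal) and $\epsilon$ is a transcendental element of value strictly above all of $vK$. This should make $x$ approximate $K$ as well as $a$ does (so $v(x-K)$ has no maximum), while simultaneously creating a value of $K(x)$ outside $\mathbb{Q}\otimes vK$, which will block $x$ from lying in the algebraic closure of any immediate extension.

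First I would pick $a\notin K$ with $(K(a)|K,v)$ a proper immediate algebraic extension; by Theorem~\ref{charact_immediate}, $v(a-K)$ has no maximum. Then I would form $\Gamma:=vK\oplus\mathbb{Z}\gamma$ with the lex order $\gamma>vK$, take $\epsilon$ transcendental over $K(a)$ with $v(\epsilon):=\gamma$ (the Gauss extension $v(\sum c_i\epsilon^i):=\min_i\{v(c_i)+i\gamma\}$), and set $x:=a+\epsilon$. Then $x$ is transcendental over $K$ (else $\epsilon=x-a$ would be algebraic over $K(a)$), and for every $c\in K$ one has $v(x-c)=\min\{v(a-c),\gamma\}=v(a-c)$, so $v(x-K)=v(a-K)$ has no maximum. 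Applying Lemma~\ref{lem:val_bas_subext} with $W=\{0\}$, $W'=K$, followed by Corollary~\ref{cor:Kvalbasis}, yields that $K\oplus Kx$ admits no $K$-valuation basis.

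Next, I would compute $v(f(x))$ for $f\in K[Y]$ the minimal polynomial of $a$ over $K$. The Hasse expansion gives
\[
f(x)=f(a+\epsilon)=\sum_{j\geq 0}H_j(f)(a)\,\epsilon^j,
\]
with $H_0(f)(a)=0$ but $H_{\deg f}(f)(a)=1$, so there is a least $j_0\geq 1$ with $H_{j_0}(f)(a)\neq 0$. Since $\gamma>vK\supseteq vK(a)\ni v(H_{j_0}(f)(a))$, the minimum in the sum is attained uniquely at $j_0$, and $v(f(x))=v(H_{j_0}(f)(a))+j_0\gamma\notin\mathbb{Q}\otimes vK$ inside $vK(x)$.

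Finally, for any maximal immediate extension $M$ of $K$ and any common valued field extension $\Omega$ of $(K(x)|K,v)$ and $(M|K,v)$, algebraic maximality (hence henselianity) of $M$ gives $vM^{\mathrm{alg}}=\mathbb{Q}\otimes vM=\mathbb{Q}\otimes vK$. The embedding of value groups $vK(x)\hookrightarrow v\Omega$ is injective and order-preserving, so the abstract fact that $nj_0\gamma\notin vK$ for every $n\geq 1$ transports to $v_\Omega(f(x))\notin\mathbb{Q}\otimes v_\Omega(K)=\mathbb{Q}\otimes v_\Omega(M)$. Thus $f(x)\notin M^{\mathrm{alg}}$, which forces $x\notin M^{\mathrm{alg}}$, i.e., $x$ is transcendental over $M$ in $\Omega$. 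Since $K(x)|K$ is purely transcendental of transcendence degree one, this yields linear disjointness of $K(x)$ and $M$ over $K$ in $\Omega$. The main technical hurdle is ensuring that the ``newness'' of $v(f(x))$ modulo $\mathbb{Q}\otimes vK$ survives the value group embedding, which follows cleanly from injectivity; a secondary subtlety, namely the possibly inseparable case in positive characteristic, is absorbed by using Hasse rather than ordinary derivatives in the expansion of $f(a+\epsilon)$.
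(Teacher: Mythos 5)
Your proposal is correct and takes essentially the same route as the paper: both set $x=a+\epsilon$ with $a$ generating a proper immediate algebraic extension and $v(\epsilon)>vK$, deduce that $v(x-K)=v(a-K)$ has no maximum (hence no valuation basis via Corollary~\ref{cor:Kvalbasis} and Lemma~\ref{lem:val_bas_subext}), and rule out algebraicity of $x$ over any maximal immediate extension $M$ by noting that $v(f(x))$ is not torsion over $vM=vK$, concluding linear disjointness via Lang's proposition. Your explicit Hasse-derivative computation of $v(f(x))$ merely spells out what the paper asserts directly, namely $v(f(x))>vK$.
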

\begin{proof}
By our assumption there exists a nontrivial immediate algebraic extension $(K(a)|K,v)$. Take an extension 
$(K(a,y)|K(a),v)$ such that $v(y)>vK$ and set $x:=a+y$. Then $v(x-K)=v(a-K)$ has no maximal element (by Theorem \ref{charact_immediate}) and therefore, by Theorem \ref{thm:main} the extension $(K(x)|K,v)$ is not $vs$-defectless (in fact, by Corollary \ref{cor:Kvalbasis} and Lemma \ref{lem:val_bas_subext}, the $K$-vector space generated by $1$ and $x$ does not admit a $K$-valuation basis). On the other hand, $K(x)$ is linearly disjoint from each maximal immediate extension of $(K,v)$ since otherwise, by \cite[VIII, \S 3, Proposition~3.3]{lang2002}, $x$ would be algebraic over some such extension $(M,v)$. But this is impossible because if $f$ is the minimal polynomial of $a$ over $K$, then $vf(x)>vK=vM$, so $vM(x)$ would contain an element that is not torsion over $vM$.
\end{proof}


\section{Instances of $(C)\Rightarrow (A)$}\label{sec:CA}

The aim of this section is to show that various natural classes of valued fields do satisfy the implication $(C)\Rightarrow (A)$. We will need the following theorem from \cite{B}:

\begin{theorem}[Baur]\label{thm:baur} Let $(K,v)$ be maximal. Then every extension $(L|K,v)$ is $vs$-defectless. In particular, every algebraic extension is defectless. 
\end{theorem}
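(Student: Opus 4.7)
The plan is to use the equivalence $(A)\Leftrightarrow (B)$ of Theorem~\ref{thm:main} and prove property $(A)$ by showing, by induction on $n=\dim_K W$, that every finitely generated $K$-subspace $W\subseteq L$ admits a $K$-valuation basis. The cases $n\leq 1$ are trivial. For the inductive step, I would fix a codimension-one subspace $W_1\subseteq W$ and pick any $a\in W\setminus W_1$; by induction $W_1$ carries a $K$-valuation basis $\{e_1,\dots,e_n\}$, so by Corollary~\ref{lem:1-ext-variant} it is enough to show that $v(a-W_1)$ attains its maximum---for then, if $w^\ast\in W_1$ realizes this maximum, $\{e_1,\dots,e_n,a-w^\ast\}$ is a $K$-valuation basis of $W$.

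The heart of the argument is to use the maximality of $K$ to produce such a maximizer. Suppose for a contradiction that $v(a-W_1)$ has no maximum, and pick a well-ordered cofinal sequence $(w_\rho)\subseteq W_1$ with $\alpha_\rho:=v(a-w_\rho)$ strictly increasing. A short computation gives that $(w_\rho)$ is pseudo-Cauchy in $W_1$ with $v(w_\sigma-w_\rho)=\alpha_\rho$ for $\rho<\sigma$. Writing $w_\rho=\sum_{i=1}^n c_{i,\rho}e_i$ with $c_{i,\rho}\in K$ and using the valuation independence of $\{e_i\}$, this translates into
\[
\min_{1\leq i\leq n}\bigl(v(c_{i,\sigma}-c_{i,\rho})+v(e_i)\bigr)=\alpha_\rho\qquad(\rho<\sigma),
\]
so in particular $v(c_{i,\sigma}-c_{i,\rho})\geq\alpha_\rho-v(e_i)$ for each $i$. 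The sets $B_{i,\rho}:=\{y\in K\mid v(y-c_{i,\rho})\geq\alpha_\rho-v(e_i)\}$ then form, for each $i$, a nested chain of nonempty balls of $K$; invoking spherical completeness of $(K,v)$ (equivalent to maximality), I would pick $c_i^\ast\in\bigcap_\rho B_{i,\rho}$ and set $w^\ast:=\sum_i c_i^\ast e_i\in W_1$. The ultrametric inequality applied to $a-w^\ast=(a-w_\rho)+(w_\rho-w^\ast)$, together with the bound $v(w_\rho-w^\ast)\geq\alpha_\rho$ coming from the valuation basis $\{e_i\}$, forces $v(a-w^\ast)\geq\alpha_\rho$ for every $\rho$---which is impossible, since then $v(a-w^\ast)\in v(a-W_1)$ would dominate every element of a cofinal chain whose supremum is not attained (the case $a=w^\ast$ being ruled out by $a\notin W_1$).

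The second assertion ``every algebraic extension is defectless'' is then immediate from Corollary~\ref{cor:vs-def-def}, which identifies $vs$-defectlessness for algebraic extensions with uniqueness of the valuation extension plus defectlessness. The main technical point I expect to need care is that the radii $\alpha_\rho-v(e_i)$ lie a priori in $vL$ and not in $vK$, so one must reconcile the sets $B_{i,\rho}$ with genuine $K$-balls before invoking spherical completeness: when $\alpha_\rho-v(e_i)$ admits a ceiling in $vK$, one replaces the radius by that ceiling, yielding an honest closed $K$-ball with the same underlying set; otherwise $B_{i,\rho}$ reduces to $\{c_{i,\rho}\}$, forcing the sequence $(c_{i,\rho})$ to stabilize so that the required $c_i^\ast$ exists trivially.
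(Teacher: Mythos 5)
The paper itself offers no proof of this theorem: it is quoted from Baur's work, and the accompanying remark only traces alternative proofs through Warner, Ribenboim and Aschenbrenner--van den Dries--van der Hoeven, all of which ultimately rest on the fact that finite powers of a spherically complete ultrametric space are spherically complete. Your argument is a self-contained rendering of exactly that classical route, organized through the machinery of Section~\ref{sec:valind}: induction on dimension, coordinatization of a pseudo-Cauchy sequence $(w_\rho)$ by means of a valuation basis of $W_1$, and an appeal to the maximality of $K$ in each coordinate. The skeleton is sound: the computation $v(w_\sigma-w_\rho)=\alpha_\rho$, the nestedness and nonemptiness of the sets $B_{i,\rho}$, the estimate $v(w_\rho-w^\ast)\geq\alpha_\rho$ leading to the contradiction that $v(a-w^\ast)$ would be a maximum of a set assumed to have none, and the deduction of the second assertion from Corollary~\ref{cor:vs-def-def} are all correct.

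The one genuine gap is in the step you flag yourself, and your proposed repair does not cover all cases. The dichotomy ``either $\alpha_\rho-v(e_i)$ admits a ceiling in $vK$, or $B_{i,\rho}=\{c_{i,\rho}\}$'' omits the case where $G:=\{\gamma\in vK \mid \gamma\geq \alpha_\rho-v(e_i)\}$ is nonempty but has no least element (e.g.\ $vK$ dense and $\alpha_\rho-v(e_i)$ realizing an irrational cut); then $B_{i,\rho}$ is neither a closed $K$-ball nor a singleton, and in general it need not even be an intersection of closed $K$-balls --- it can be an open ball, or a ball whose ``radius'' is a cut in $vK$. So spherical completeness stated for nests of closed balls does not literally apply. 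The fix is standard but has to be said: either invoke the equivalence of maximality with the nonempty-intersection property for \emph{arbitrary} chains of ultrametric balls, i.e.\ sets of the form $\{y\in K\mid v(y-c)\in F\}$ with $F$ a final segment of $vK\cup\{\infty\}$; or avoid balls altogether by passing to a cofinal subsequence of $(w_\rho)$ along which each coordinate sequence $(c_{i,\rho})_\rho$ is either eventually constant or pseudo-Cauchy in $(K,v)$, and then taking $c_i^\ast$ to be the constant value, respectively a pseudo-limit. The latter is precisely the ``finite products of spherically complete spaces are spherically complete'' lemma that the paper's remark attributes to \cite{Ku} and \cite{ADH}. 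With that point made precise, your proof is complete.
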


\begin{remark}
Baur's theorem is an enhanced version of the following theorem:\sn
{\it If $(K,v)$ is a maximal field and $(L|K,v)$ is a finite extension, then also $(L,v)$ is a maximal field, and the 
extension $(L|K,v)$ admits a valuation basis.}
\sn
The history of this theorem is not entirely clear. In the past, we have worked with the following two references: Warner's book on Topological Fields (\cite[Theorems 31.21 and 31.22]{W}), and Ribenboim's 1968 monograph on Valuation Theory (\cite[Th\'eor\`eme 1, p.~ 230]{Ri}). Ribenboim credits Krull, and the same is done in \cite{ADH}. But although Krull was apparently the first to prove the existence of maximal immediate extensions in \cite{Kr}, we did not find the above theorem in that paper. In contrast, Warner credits Kaplansky for proving the theorem in his thesis, but apparently this part of the thesis was never published.

Ribenboim in his proof uses the fact that a valued field is maximal if and only if every pseudo Cauchy sequence has a limit. The proof is relatively straightforward, but very technical. Warner uses the notion of ``linearly compact module''. In more modern terms, this translates to the notion of spherical completeness, and one can use that valued fields are maximal if and only if their underlying ultrametric spaces are spherically complete. It turns out that the gist of the two proofs actually is the fact that finite products of spherically complete ultrametric spaces are again spherically complete, as proven in \cite[Proposition 10]{Ku}. In that paper, this is used to prove that the multidimensional Hensel's Lemma holds for every maximal valued field, which then by a quick argument implies that it also holds in every henselian field.
In their recent book \cite{ADH}, in Corollary 3.2.26, Aschenbrenner, van den Dries and van der Hoeven use the theorem on products of
spherically complete ultrametric spaces to give a short and elegant proof of Baur's theorem, and thereby a nice alternative to the proofs of the above cited theorem given by Ribenboim and Warner.
\end{remark}

\begin{corollary}
A valued field is maximal if and only if every extension of the field is $vs$-defectless.
\end{corollary}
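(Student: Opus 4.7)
The plan is to treat the two implications of the biconditional separately, with the forward direction being essentially immediate from Baur's theorem and the reverse direction requiring a small argument that leverages the characterization already proved in Theorem~\ref{thm:main}.

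For the forward direction, if $(K,v)$ is maximal then Theorem~\ref{thm:baur} directly states that every extension $(L|K,v)$ is $vs$-defectless, so nothing further needs to be done.

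For the reverse direction, I would argue by contrapositive: assume $(K,v)$ is not maximal and produce a specific extension which is not $vs$-defectless. By definition of maximality, there exists a proper immediate extension $(L|K,v)$. Pick any $a \in L \setminus K$. Since $(L|K,v)$ is immediate, Theorem~\ref{charact_immediate} (the classical Kaplansky characterization recalled earlier) guarantees that the set $v(a-K)$ has no maximal element. Now consider the intermediate field extension $(K(a)|K,v)$, which is itself an extension of $(K,v)$. Taking the finitely generated (in fact $1$-dimensional) $K$-vector subspace $W:=K \subseteq K(a)$ and the element $b:=a \in K(a)$, condition (B) of Theorem~\ref{thm:main} fails because $v(b-W)=v(a-K)$ has no maximal element. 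By the equivalence $(A) \Leftrightarrow (B)$ from Theorem~\ref{thm:main}, the extension $(K(a)|K,v)$ is not $vs$-defectless, which gives the required witness.

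I do not anticipate any real obstacle here: the substance of the result has already been absorbed into Baur's theorem on one side and into Theorem~\ref{thm:main} on the other, so this corollary is essentially a bookkeeping statement. The only minor point to be careful about is to make sure the example $(K(a)|K,v)$ qualifies as an extension in the sense of Definition~\ref{def:vsdefect} (which it does, since valuations restrict canonically and $K \subseteq K(a) \subseteq L$), and to cite the correct equivalence from Theorem~\ref{thm:main} rather than the somewhat weaker Corollary~\ref{cor:Kvalbasis}.
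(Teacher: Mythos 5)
Your proof is correct, and the forward direction coincides with the paper's (a direct citation of Theorem~\ref{thm:baur}). For the converse the paper takes a shorter and more elementary route: given a nontrivial immediate extension $(L|K,v)$, it invokes Lemma~\ref{lem:immediate_valdep}, which says that \emph{any} two nonzero elements of $L$ are $K$-valuation dependent; hence no $2$-dimensional $K$-subspace of $L$ admits a valuation basis and $(L|K,v)$ itself is the non-$vs$-defectless witness. You instead pass through Kaplansky's characterization (Theorem~\ref{charact_immediate}) to get that $v(a-K)$ has no maximal element and then apply the implication $(A)\Rightarrow(B)$ of Theorem~\ref{thm:main} in contrapositive form to conclude that $(K(a)|K,v)$ fails to be $vs$-defectless. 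Both arguments are valid; yours leans on one of the paper's main new results (the $(A)\Rightarrow(B)$ direction, which was previously open), whereas the paper's argument only needs the early, self-contained Lemma~\ref{lem:immediate_valdep}. If you want to avoid the heavy machinery while keeping your structure, note that Lemma~\ref{lem:val_bas_subext} together with Corollary~\ref{cor:Kvalbasis} already yields ``valuation basis of $K\oplus Ka$ implies $v(a-K)$ has a maximum,'' which is all your contrapositive requires.
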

\begin{proof}
If $(K,v)$ is a maximal field, then by the above theorem it is also $vs$-defectless. For the converse, suppose that $(K,v)$ is not maximal, so it admits a nontrivial immediate extension $(L|K,v)$. Since by Lemma~\ref{lem:immediate_valdep} every two elements of $L$ are $K$-valuation dependent, the extension is not $vs$-defectless. 
\end{proof}

\subsection{Abstract criterion}

We are ready to prove the sufficiency of the abstract criterion mentioned in the introduction. Standard results in model theory will be involved in the proof and we refer the reader to \cite{TZ} for the needed background. 

\begin{theorem}\label{thm:criterion}
Consider an elementary class $\cal K$ of valued fields having the following properties:
\begin{enumerate}
\item[(P1)] every member of $\cal K$ is existentially closed in each of its maximal immediate extensions,
\item[(P2)] all maximal immediate extensions of members of $\cal K$ are again members of $\cal K$,
\item[(P3)] if $(K,v)\in \cal K$ and $(F,v)$ is a relatively algebraically closed subfield such that $(K|F,v)$ is
immediate, then $(F,v)\in \cal K$.
\end{enumerate}
Take $(K,v)\in \cal K$. Then every valued field extension $(L|K,v)$ satisfies $(C)\Rightarrow (A)$. 
\end{theorem}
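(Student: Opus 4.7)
The plan is to derive $(A)$ from $(C)$ by verifying $(B)$: by the equivalence $(A)\Leftrightarrow(B)$ of Theorem~\ref{thm:main}, it suffices to show that for every finite-dimensional $K$-subspace $W\subseteq L$ with basis $c_1,\ldots,c_n$ and every $b\in L$, the set $v(b-W)$ admits a maximum.

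First I would fix a maximal immediate extension $(\hat K,v)$ of $(K,v)$ in some common valued-field extension of $L$. By $(P2)$, $\hat K\in\mathcal K$, and by $(P1)$, $K$ is existentially closed in $\hat K$. Hypothesis $(C)$ yields that $L$ and $\hat K$ are linearly disjoint over $K$, so the compositum $L\hat K$ is naturally a valued field extending both. Since $\hat K$ is maximal, Baur's theorem (Theorem~\ref{thm:baur}) applies to $(L\hat K\mid \hat K,v)$, which is therefore $vs$-defectless. Applying $(A)\Leftrightarrow(B)$ in this larger extension, $v(b-\hat K\cdot W)$ has a maximum, realised by some $\hat w=\sum_{i=1}^{n}\hat k_ic_i$ with $\hat k_i\in\hat K$. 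Setting $\gamma:=v(b-\hat w)$, and since $W\subseteq\hat K\cdot W$, this $\gamma$ is an upper bound for $v(b-W)$.

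The main task is to descend $\hat w$ to some $w=\sum_{i=1}^{n}k_ic_i\in W$ with $v(b-w)=\gamma$. A sufficient condition is $v(\sum(\hat k_i-k_i)c_i)>\gamma$, since the ultrametric inequality then forces $v(b-w)=\gamma$. I would produce such $k_i\in K$ via an existential closedness transfer built from $(P1)$ and $(P3)$. Concretely, passing to the relative algebraic closure in $\hat K$ of the subfield $K(\hat k_1,\ldots,\hat k_n)\subseteq\hat K$, property $(P3)$ places this intermediate field $F$ in $\mathcal K$, and $(P1)$ applied to $F$ together with the existential closedness of $K$ in $\hat K$ furnishes the descent, approximating each $\hat k_i$ by an element of $K$ finely enough that the corresponding $w\in W$ achieves $v(b-w)=\gamma$.

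The main obstacle is precisely this transfer. The statement ``$v(b-W)$ attains a maximum'' is $\Sigma_2$ in the unknown coefficients, so it does not descend via $(P1)$ alone. The role of the elementary-class hypotheses $(P1)$-$(P3)$ is to reorganise the relevant assertion into a family of $\Sigma_1$-statements with parameters in $K$, using $(P3)$ to select an intermediate subfield inside $\hat K$ still belonging to $\mathcal K$, and $(P1)$ to perform the existential descent. A secondary subtlety, handled through the immediate nature of $\hat K\mid K$ together with the linear disjointness obtained from $(C)$, is ensuring that $\gamma$ actually lies in $vL$ and that the approximation can be carried out uniformly in all coefficients.
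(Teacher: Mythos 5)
Your overall strategy (pass to a maximal immediate extension $M=\hat K$ of $K$, invoke Baur's theorem there, then descend the coefficients to $K$) is the same skeleton as the paper's proof, but the proposal leaves precisely the two hard steps unproved, and the tools you name for them are not the right ones. First, to approximate $\hat w=\sum\hat k_ic_i\in L.\hat K$ by an element of $L$ to within a value exceeding $\gamma$, you need that $(L.\hat K\,|\,L,v)$ is an \emph{immediate} extension; without this, $v(\hat w-L)$ may simply fail to contain any element $\geq\gamma$, and linear disjointness alone gives you nothing here. Establishing that $(L.M\,|\,L,v)$ is immediate is the bulk of the paper's argument: a Zorn's-lemma construction of a maximal subextension $M_0$ with $(L.M_0|L,v)$ immediate, combined with (P1)--(P3) and Kaplansky's theorems on pseudo Cauchy sequences of algebraic versus transcendental type, with hypothesis (C) entering exactly to rule out a limit of the sequence in $L.M_0$. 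Your proposal never addresses this.

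Second, the descent of the coefficients $\hat k_i\in\hat K$ to $k_i\in K$ cannot be done by applying (P1) to $K$ or to an intermediate field $F=K(\hat k_1,\dots,\hat k_n)^{\mathrm{alg}}\cap\hat K$: the relevant existential statement has parameters $b,c_1,\dots,c_n\in L$ and demands witnesses lying in $K$, so it is a statement about the \emph{pair} $(L,K)$, not a statement in the language of valued fields over $K$, and existential closedness of $K$ in $\hat K$ says nothing about it. The paper resolves this by taking a highly saturated elementary extension $(L^*|K^*,v)$ of the pair $(L|K,v)$, embedding $M$ into $K^*$ over $K$ (which is where (P1) is actually used), and then transferring an existential sentence with the predicate $E$ for the small field from $(L^*|K^*,v)$ down to $(L|K,v)$. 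A further point your route glosses over: you require $v\bigl(\sum(\hat k_i-k_i)c_i\bigr)>\gamma$, an approximation quality tied to $\gamma=\max v(b-\hat K\cdot W)$, whereas the paper arranges matters (via normalized valuation bases and Lemma~\ref{approxvi}) so that only approximation better than the value of the approximated element itself is ever needed, which is exactly what immediateness of $(L.M|L,v)$ supplies. As written, the proposal is a correct outline of the easy half of the proof with the two essential ingredients acknowledged but not supplied.
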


\begin{proof}
Take a highly saturated elementary extension $(L^*|K^*,v)$ of $(L|K,v)$. Then also $(K^*,v)$ is a highly saturated
elementary extension of $(K,v)$. Since $(K,v)$ is existentially closed in each maximal immediate extension, every
such extension embeds in $(K^*,v)$ over $K$. So we may assume that there is a maximal immediate extension $(M,v)$
of $(K,v)$ inside of $(K^*,v)$. We note that $(M,v)\in \cal K$ by property (P2) of $\cal K$. We wish to show that $(L.M|L,v)$ is an immediate extension.

By Zorn's Lemma, there exists a subextension $(M_0,v)$ of $(K,v)$ in $(M,v)$ maximal with the property that
$(L.M_0|L,v)$
is an immediate extension. We show first that $M_0$ is relatively algebraically closed in $M$. Take $M_1$ to be the
relative algebraic closure of $M_0$ in $M$. Then $(L.M_1|L.M_0,v)$ is an algebraic extension within $(L^*,v)$, so
$v(L.M_1)/v(L.M_0)$ is a torsion group and $(L.M_1)v|(L.M_0)v$ is an algebraic extension. Since
$(L.M_0|L,v)$ is immediate, it follows that $v(L.M_1)/vL$ is a torsion group and $(L.M_1)v|Lv$ is an algebraic
extension. But as $(L^*,v)$ is an elementary extension of $(L,v)$, also $vL^*$ is an elementary extension of $vL$
and $L^*v$ is an
elementary extension of $Lv$. It follows that $vL^*/vL$ is torsionfree and $L^*v|Lv$ is regular, so $v(L.M_1)=vL$
and $(L.M_1)v=Lv$, showing that $(L.M_1|L,v)$ is an immediate extension. Hence $M_1=M_0$ by the maximality of
$M_0\,$, i.e., $M_0$ is relatively algebraically closed in $M$. Since $(M|M_0,v)$ is immediate like $(M|K,v)$, we
obtain from property (P3) of the class $\cal K$ that $(M_0,v)\in\cal K$.

Suppose that there is some $x\in M\setminus M_0\,$. Then by \cite[Theorem~1]{kaplansky}, $x$ is the limit of a pseudo
Cauchy sequence in $(M_0,v)$ that does not have a limit in $M_0\,$. If this sequence would be of algebraic type,
then by \cite[Theorem~3]{kaplansky} there would exist a nontrivial immediate algebraic extension of $(M_0,v)$.
Passing to some maximal immediate
extension thereof, we would obtain a maximal immediate extension of $(M_0,v)$ in which $M_0$ is not relatively
algebraically closed. But this contradicts property (P1). We
conclude that the pseudo Cauchy sequence is of transcendental type.

Suppose that the pseudo Cauchy sequence has a limit $y$ in $(L.M_0,v)$. Then from \cite[Theorem~2]{kaplansky} it
follows that $(M_0(y)|M_0,v)$ is immediate. Take any maximal immediate extension $(M',v)$ of $(M_0(y),v)$; since
$(M_0(y)|M_0,v)$ and $(M_0|K,v)$ are immediate, $(M',v)$ is also a maximal immediate extension of $(K,v)$. But
$M_0(y)$ is not linearly disjoint from $M'$ over $M_0$ and thus $L$ is not linearly disjoint from $M'$ over $K$ (see \cite[VIII, \S 3, Proposition 3.1]{lang2002}),
which contradicts our assumptions. We have shown that the pseudo Cauchy sequence has no limit in $(L.M_0,v)$.

Again, \cite[Theorem~2]{kaplansky} implies that $(L.M_0(x)|L.M_0,v)$ is immediate. As also $(L.M_0|L,v)$ is immediate,
we find that $(L.M_0(x)|L,v)$ is immediate. But since $x\notin M_0$, the extension $M_0(x)|M_0$ is nontrivial,
which contradicts the maximality of $M_0\,$. We conclude that there is no such $x$, so $M_0=M$ and $(L.M|L,v)$ is
immediate.

\

Now take any $u_1,\ldots,u_n\in L$ that are $K$-linearly independent, and denote the $K$-vector space they generate
by $V$. By our assumptions, they remain $M$-linearly
independent. By Theorem \ref{thm:baur}, the $M$-vector space generated by them admits a valuation basis, and hence by Lemma~\ref{lem:existence-normalized} also a normalized $M$-valuation basis $u'_1,\ldots,u'_n\in \Span_M(u_1,\ldots,u_n)\subseteq L.M$. We write
\[
u'_i\>=\> \sum_{j=1}^n d_{ij} u_j \qquad\qquad (1\leq i\leq n)
\]
with $d_{ij}\in M\subset K^*$. Since $(L.M|L,v)$ is immediate, we can choose elements $w'_1,\ldots,w'_n\in L$ such
that $v(u'_i-w'_i)>vu'_i$ for $1\leq i\leq n$. As $u'_1,\ldots,u'_n$ are in particular normalized $K$-valuation
independent, the same holds for $w'_1,\ldots,w'_n$ by Lemma~\ref{approxvi}.

Let $E$ denote the predicate for the smaller field in the pairs $(L^*|K^*,v)$ and $(L|K,v)$. Consider the
existential sentence stating the existence of elements $x_{ij}$ with $E(x_{ij})$, $1\leq i,j\leq n$, and such that
\begin{equation}                           \label{eqappr} \stepcounter{eqn}\tag{E\arabic{eqn}} 
v \left(\sum_{j=1}^n x_{ij} u_j\,-\,w'_i\right)\> >\> vw'_i\qquad\qquad (1\leq i\leq n)
\end{equation}
with parameters $u_i$ and $w'_i$ in $(L|K,v)$. It holds in $(L^*|K^*,v)$ for $x_{ij}=d_{ij}$, and since
$(L^*|K^*,v)$ is an elementary extension of $(L|K,v)$, it also holds in $(L|K,v)$. That is, there are $c_{ij}\in K$
such that the equations \eqref{eqappr} hold with $x_{ij}=c_{ij}$. We set
\[
w_i\>:=\> \sum_{j=1}^n c_{ij} u_j \>\in\> V \qquad\qquad (1\leq i\leq n).
\]
We have that $v(w_i-w'_i)>v(w'_i)$ for $1\leq i\leq n$, hence it follows from Lemma~\ref{approxvi} that
$w_1,\ldots,w_n$ are normalized $K$-valuation independent and hence form a normalized $K$-valuation basis for $V$.
We have now proved that the extension $(L|K,v)$ is vs-defectless.
\end{proof}

Classes $\mathcal{K}$ that satisfy the hypothesis of the previous theorem include:

\begin{enumerate}
\item[$\bullet$] The class of all tame valued fields. The fact that they form an elementary class and properties (P1)-(P3) follow from results by the third author in \cite{kuhlmann2016} (see in particular Lemma 3.7 and Theorems 1.4 and 3.2). By \cite[Corollary 3.3]{kuhlmann2016}, the class of tame fields includes all algebraically maximal Kaplansky fields (see later Definition \ref{def:kaplanskyfield}), hence in particular all henselian valued fields of residue characteristic 0. Note moreover that all algebraically closed valued fields are examples of algebraically maximal Kaplansky fields. 
\item[$\bullet$] The class of all henselian finitely ramified fields (which includes the class of all $\wp$-adically closed fields). They key property to show is (P1) which can be found in \cite[Corollary. 8.23]{fvkthesis} (see also \cite[Theorem 8.9]{fvkthesis} which builds on work of Er\v{s}ov \cite{ersov} and Ziegler \cite{zig}).  
\end{enumerate}

\subsection{Two further cases}\label{subsec:kaplanskyfields}

We finish with two further cases where implication $(C)\Rightarrow (A)$ holds. The first one includes the case of discretely valued field extensions of rank 1. 

\begin{theorem}\label{thm:cofinal} Let $(L|K,v)$ be such that: 
\begin{enumerate}
\item $\widehat{K}$ (the completion of $K$) is the maximal immediate extension of $K$ and 
\item $vK$ is cofinal in $vL$. 
\end{enumerate}
Then $(C)\Rightarrow (A)$. 
\end{theorem}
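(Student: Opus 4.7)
The plan is to prove $(A)$ directly. Given a finite-dimensional $K$-vector subspace $V \subseteq L$ of $K$-dimension $m$ with $K$-basis $v_1, \ldots, v_m$, I would exhibit a $K$-valuation basis of $V$ by approximating a $\widehat{K}$-valuation basis that lives in the larger $\widehat{K}$-space $V' := \widehat{K} v_1 + \cdots + \widehat{K} v_m \subseteq \widehat{L}$.

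Three preliminary observations set up the argument. By hypothesis (2), $vK$ is cofinal in $vL$, so the valuation topology on $K$ agrees with that induced from $L$, giving a canonical embedding $\widehat{K}\hookrightarrow \widehat{L}$ and a common valued field in which to work. By hypothesis (1), $\widehat{K}$ is the maximal immediate extension of $K$ and is therefore itself a maximal valued field, since any immediate extension of $\widehat{K}$ would also be an immediate extension of $K$. Theorem~\ref{thm:baur} thus applies to $\widehat{L}|\widehat{K}$, showing that every finitely generated $\widehat{K}$-subspace of $\widehat{L}$ admits a $\widehat{K}$-valuation basis. Finally, since $\widehat{K}|K$ is immediate, condition $(C)$ forces $v_1,\ldots,v_m$ to remain $\widehat{K}$-linearly independent in $\widehat{L}$, so $\dim_{\widehat{K}} V' = m$. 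With this, I would pick a normalized $\widehat{K}$-valuation basis $\{e_1,\ldots,e_m\}$ of $V'$ using Lemma~\ref{lem:existence-normalized}, write $e_i = \sum_j \alpha_{ij} v_j$ with $\alpha_{ij}\in\widehat{K}$, and seek $\alpha'_{ij}\in K$ close enough to $\alpha_{ij}$ that the vectors $e'_i := \sum_j \alpha'_{ij} v_j \in V$ satisfy $v(e_i - e'_i) > v(e_i)$ for $1\leq i\leq m$. Lemma~\ref{approxvi} would then guarantee that $\{e'_1,\ldots,e'_m\}$ is normalized $\widehat{K}$-valuation independent, hence \emph{a fortiori} $K$-valuation independent. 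As these are $m$ $K$-linearly independent vectors in the $m$-dimensional space $V$, they would form the desired $K$-valuation basis.

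The main obstacle, and the single point at which hypotheses (1) and (2) interact, is establishing that such $K$-approximations exist. Density of $K$ in $\widehat{K}$ supplies, for any prescribed $\gamma \in vK$, elements $\alpha'_{ij}\in K$ with $v(\alpha_{ij}-\alpha'_{ij})>\gamma$, so the ultrametric inequality yields $v(e_i - e'_i) \geq \gamma + \min_j v(v_j)$. To push this above $v(e_i)$ one needs $\gamma > v(e_i) - \min_j v(v_j)$, a quantity that a priori lies only in $vL$. Cofinality of $vK$ in $vL$ is exactly what allows $\gamma\in vK$ to be chosen dominating this $vL$-value, closing the estimate. Without hypothesis (2) the thresholds would escape the range of approximations controllable from $\widehat{K}$, and this last step would break down.
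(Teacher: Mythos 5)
Your proof is correct and follows essentially the same route as the paper's: both apply Baur's theorem (Theorem~\ref{thm:baur}) to the $\widehat{K}$-span of the given $K$-basis and then approximate the resulting $\widehat{K}$-valuation basis by elements of $V$, using density of $K$ in $\widehat{K}$ together with cofinality of $vK$ in $vL$ to make the error terms exceed the values $v(e_i)$. The only (harmless) difference is that you normalize first and invoke Lemma~\ref{approxvi}, whereas the paper verifies the valuation independence of the approximants by a direct ultrametric computation.
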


\begin{proof}
Suppose that $(C)$ holds and let $V\subseteq L$ be a $K$-vector space such that $\dim_K(V)=n$. Let $\{b_1,\ldots,b_n\}$ be a $K$-basis of $V$. By condition $(C)$, we have that $\{b_1,\ldots,b_n\}$ is also $\widehat{K}$-independent. Let $W:=\Span_{\widehat{K}}(b_1,\ldots,b_n)$. Since $\widehat{K}$ is maximal, by Theorem \ref{thm:baur}, $W$ has a $\widehat{K}$-valuation basis $\{b_1',\ldots,b_n'\}$. Let $I$ denote the set $\{1,\ldots, n\}$. For $i\in I$, let $c_{ij}\in \widehat{K}$ be such that $b_i'=\sum_{j\in I} c_{ij}b_j$. 

For each pair $(i,j)\in I^2$, there is a Cauchy sequence $(c_{ij}^\alpha)_{\alpha<\lambda_{ij}}$ in $K$ with limit $c_{ij}$. Since $vK$ is cofinal in $vL$ and the values in $\{v((c_{ij}^{\alpha}-c_{ij})b_j)\mid \alpha<\lambda_{ij}\}$ are either cofinal in $vL$ or contain $\{\infty\}$, for each pair $(i,j)\in I^2$, there is $\alpha_{ij}<\lambda_{ij}$ such that 
\begin{equation}\label{equation1} \stepcounter{eqn}\tag{E\arabic{eqn}} 
\min_{j\in I} \{v((c_{ij}^{\alpha_{ij}}-c_{ij})b_j)\} > v\left(\sum_{j\in I} c_{ij}b_j\right)=v(b_i'). 
\end{equation}
Set $b_i^*:=\sum_{j\in I} c_{ij}^{\alpha_{ij}}b_j$. Note that inequality \eqref{equation1} implies in particular that for each $i\in I$,
$v(b_i^*-b_i')>v(b_i^*)=v(b_i')$. Indeed, we have that 
\begin{equation}\label{equation1.5} \stepcounter{eqn}\tag{E\arabic{eqn}} 
v(b_i^*-b_i') = v\left(\sum_{j\in I} (c_{ij}^{\alpha_{ij}}-c_{ij})b_j\right) \geqslant \min_{j\in I}\{v((c_{ij}^{\alpha_{ij}}-c_{ij})b_j)\} > v\left(\sum_{j\in I} c_{ij}b_j\right)=v(b_i').
\end{equation}
We claim that $b_i^*$ is a $K$-valuation basis of $V$. Let $a_1,\ldots,a_n\in K$. From \eqref{equation1.5} it follows that 
\begin{equation}\label{equation2} \stepcounter{eqn}\tag{E\arabic{eqn}} 
v\left(\sum_{i\in I} a_i(b_i^*-b_i')\right)\geqslant \min_{i\in I}\{v(a_i(b_i^*-b_i'))\} > \min_{i\in I}\{v(a_ib_i')\}=v\left(\sum_{i\in I} a_ib_i'\right), 
\end{equation}
where the last equality holds since $b_i'$ are $\widehat{K}$-valuation independent. We therefore have that  
\begin{align*}
v\left(\sum_{i\in I} a_ib_i^*\right) 	& = v\left(\sum_{i\in I} a_ib_i^*-\sum_{i\in I} a_ib_i'+\sum_{i\in I} a_ib_i'\right) & \\
								& = v\left(\sum_{i\in I} a_i(b_i^*-b_i')+\sum_{i\in I} a_ib_i'\right)& \\
								& =\min\{v(a_1b_1'),\ldots,v(a_nb_n')\} & \text{ by \eqref{equation2}} \\
								& =\min\{v(a_1b_1^*),\ldots,v(a_nb_n^*)\}.
\end{align*} 

\end{proof}

Finally, our second case deals with algebraic extensions of Kaplansky fields. Let us recall their definition. 

\begin{definition}\label{def:kaplanskyfield} A valued field $(K,v)$ of residue characteristic $p\geqslant 0$ is called a \emph{Kaplansky field} if it satisfies:
\begin{enumerate}
\item[(K1)] if $p>0$ then the value group $vK$ is $p$-divisible,
\item[(K2)] the residue field $Kv$ is perfect,
\item[(K3)] the residue field $Kv$ admits no finite separable extension of degree divisible by $p$.
\end{enumerate}
\end{definition}
The original ``hypothesis A'' assumed by Kaplansky consisted of condition (K1) and the following property:
\begin{enumerate}
\item[(K2')]  for every additive polynomial $f(X)\in Kv[X]$  and $c\in Kv$ the polynomial $f(X)+c$ has a root in $Kv$.
\end{enumerate}
Then Whaples clarified the meaning of condition (K2') proving that it holds if and only if $Kv$ admits no finite extensions of degree divisible by $p$  {\cite[Theorem 1]{whaples1957}}. This shows the equivalence of conditions (K1)-(K3) with ``hypothesis A''.  

We will need the following Theorem about algebraically maximal Kaplansky fields. 

%
%
%

\begin{theorem}[{\cite[Theorem 1.1]{fvk}}]  \label{subf_max_imm}
Take an algebraically maximal Kaplansky field $(L,v)$ and a subfield $K$ of $L$. Then $L$ contains a maximal immediate algebraic extension of $(K,v)$. Moreover, if $(L,v)$ is maximal, then it also contains a maximal immediate extension of $(K,v)$. 
\end{theorem}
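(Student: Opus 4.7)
The plan is to construct the maximal immediate algebraic extension inside $L$ via Zorn's lemma. Consider the collection $\mathcal{F}$ of subfields $F$ of $L$ containing $K$ with $(F|K,v)$ immediate and algebraic, partially ordered by inclusion. Directed unions of members of $\mathcal{F}$ remain in $\mathcal{F}$ (since both immediateness and algebraicity are preserved under directed unions), so Zorn's lemma produces a maximal element $M\in\mathcal{F}$. The entire argument reduces to showing that $M$ itself is algebraically maximal.

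Suppose for contradiction that $(M,v)$ admits a proper immediate algebraic extension. By Kaplansky's classical characterization of immediate extensions via pseudo Cauchy sequences (see~\cite{kaplansky}), there exists a pseudo Cauchy sequence $(a_\nu)_{\nu<\lambda}$ in $M$ of algebraic type, with no pseudo-limit in $M$ and with minimal polynomial $f\in M[X]$. Viewed inside $L$, the sequence $(a_\nu)$ remains of algebraic type, witnessed by the same polynomial $f\in L[X]$. By the algebraic maximality of $L$, the sequence has a pseudo-limit $b\in L$. Since the minimal polynomial over $L$ divides $f$, this $b$ is a root of $f$; in particular $b$ is algebraic over $M$, and $b\notin M$ because $(a_\nu)$ has no pseudo-limit in $M$. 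Kaplansky's theorem then yields that $(M(b)|M,v)$ is immediate, and by transitivity $(M(b)|K,v)$ is immediate algebraic. Hence $M\subsetneq M(b)\subseteq L$ contradicts the maximality of $M$ in $\mathcal{F}$.

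For the moreover part the same Zorn construction is performed, replacing $\mathcal{F}$ by the collection of all (not necessarily algebraic) immediate extensions of $K$ inside $L$, yielding a maximal element $M'$. If $M'$ failed to be maximal, a pseudo Cauchy sequence in $M'$ without pseudo-limit in $M'$---now of algebraic or transcendental type---would acquire a pseudo-limit $b\in L$ by the full maximality of $L$, and $(M'(b)|M',v)$ would again be immediate by Kaplansky's theorem, contradicting maximality. The main obstacle throughout is the delicate step in the algebraic case, where one must guarantee that the pseudo-limit $b\in L$ is actually a root of the minimal polynomial $f\in M[X]$, so that $(M(b)|M,v)$ is truly immediate rather than generating a larger residue field or value group extension within $L$; it is precisely here that the Kaplansky hypothesis on $L$ is indispensable, via the structural rigidity it imposes on pseudo-limits of algebraic-type sequences.
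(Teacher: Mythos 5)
First, note that the paper does not prove this statement at all: it is imported verbatim as \cite[Theorem 1.1]{fvk}, so there is no internal proof to compare against and your attempt must stand on its own. Your Zorn reduction is fine: directed unions preserve immediateness and algebraicity, a maximal $M$ exists, and $M$ is a maximal immediate algebraic extension of $K$ if and only if $M$ is algebraically maximal. The transcendental case in the ``moreover'' part is also essentially sound, since for a pseudo Cauchy sequence of transcendental type every polynomial has eventually constant value along the sequence, whence \emph{any} pseudo-limit $b$ in \emph{any} extension satisfies $v(h(b))=\lim v(h(a_\nu))<\infty$ for all nonzero $h$, is therefore transcendental, and generates an immediate extension by Kaplansky's Theorem~2.

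The genuine gap is the sentence ``Since the minimal polynomial over $L$ divides $f$, this $b$ is a root of $f$.'' This is unjustified and false for an arbitrary pseudo-limit. Algebraic maximality of $L$ only guarantees \emph{some} pseudo-limit $b\in L$ of the algebraic-type sequence, and pseudo-limits are massively non-unique: if $b$ is one and $c\in L$ has $v(c)$ larger than all the values $v(a_{\nu+1}-a_\nu)$ (which happens whenever these values are not cofinal in $vL$), then $b+c$ is another. Such a $b$ need not be a root of $f$, need not be algebraic over $M$ at all (recall $L|K$ is an arbitrary, possibly transcendental, extension), and $(M(b)|M,v)$ need not be immediate --- Kaplansky's Theorem~3 yields immediateness only for a limit that is simultaneously a root of the minimal-degree polynomial $f$, and the theorem by itself does not place such a root inside $L$. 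Your closing paragraph concedes exactly this (``the main obstacle \ldots is precisely here that the Kaplansky hypothesis on $L$ is indispensable''), but an appeal to unspecified ``structural rigidity'' is not an argument: producing, inside $L$, a root of $f$ that is a limit of the sequence and generates an immediate extension of $M$ is the entire mathematical content of the theorem and the only point where the Kaplansky hypothesis (via uniqueness of maximal immediate extensions and the defectlessness of algebraically maximal Kaplansky fields) actually enters. As written, the proof asserts its crucial step rather than proving it.
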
 


\begin{theorem}\label{thm:kaplansky algebraic}
Assume that $(K,v)$ is a Kaplansky field. If $(L,v)$ is an algebraic extension of $(K,v)$, then $(C)\Rightarrow  (A)$.
\end{theorem}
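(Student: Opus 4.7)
The plan is to leverage a maximal immediate algebraic extension $(K^*,v)$ of $(K,v)$ in order to reduce to a setting where the relevant algebraic extension is automatically $vs$-defectless, and then descend back to $K$ via an approximation argument using that $(K^*|K,v)$ is immediate.

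\textbf{Setup.} Since $(K,v)$ is Kaplansky and immediate extensions preserve both the value group and the residue field, $(K^*,v)$ is again a Kaplansky field; because it is algebraically maximal by construction, $(K^*,v)$ is in fact an algebraically maximal Kaplansky field, hence tame by \cite[Corollary 3.3]{kuhlmann2016}. In particular $(K^*,v)$ is henselian and defectless, so every algebraic extension of $(K^*,v)$ has a unique extension of the valuation and is itself defectless, which by Corollary~\ref{cor:vs-def-def} means it is $vs$-defectless.

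\textbf{Core argument.} Place $L$ and $K^*$ inside a common valued field extension (for instance, extend $v$ to an algebraic closure of $K$ containing compatible copies of both). Hypothesis $(C)$ then yields that $L$ and $K^*$ are linearly disjoint over $K$, and since $L|K$ is algebraic, so is $L.K^*|K^*$; by the setup, $(L.K^*|K^*,v)$ is $vs$-defectless. Now take a finitely generated $K$-subspace $V \subseteq L$ with $K$-basis $\{v_1,\ldots,v_n\}$, which by linear disjointness is also a $K^*$-basis of $V^* := V.K^*$. The $vs$-defectlessness of $L.K^*|K^*$ furnishes a $K^*$-valuation basis of $V^*$, which is in particular $K$-valuation independent; by Lemma~\ref{lem:existence-normalized} together with part (3) of Lemma~\ref{val_basis_propert}, after rescaling by elements of $K^{\times}$ we obtain a set $\{b_1^*,\ldots,b_n^*\}$ which is normalized $K$-valuation independent and still a $K^*$-basis of $V^*$. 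Writing $b_i^* = \sum_j c_{ij} v_j$ with $c_{ij} \in K^*$, I would use that $(K^*|K,v)$ is immediate (Theorem~\ref{charact_immediate}) to choose $d_{ij} \in K$ with $v(c_{ij}-d_{ij}) > v(b_i^*) - v(v_j)$; setting $b_i := \sum_j d_{ij} v_j \in V$ then gives $v(b_i - b_i^*) > v(b_i^*)$. By Lemma~\ref{approxvi}, $\{b_1,\ldots,b_n\}$ is normalized $K$-valuation independent, and having exactly $n = \dim_K V$ elements it is a $K$-basis of $V$ (Remark~\ref{rmk:valInd}), hence a $K$-valuation basis. Since $V$ was arbitrary, $(L|K,v)$ is $vs$-defectless.

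\textbf{Main obstacle.} The only non-routine input is the fact that $(K^*,v)$ is tame, which is what delivers defectlessness of $L.K^*|K^*$; once this is in hand, the rest is the standard normalize-then-approximate routine tailored for by Lemma~\ref{approxvi}. A minor technical matter is arranging $L$ and $K^*$ inside a common valued extension so that condition $(C)$ may be invoked, but this is unproblematic since $K^*$ is henselian and so embeds canonically into any algebraic closure of $K$ carrying a prolongation of $v$.
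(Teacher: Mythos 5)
Your overall strategy --- pass to a maximal immediate algebraic extension $K^*$ of $K$, observe that $(K^*,v)$ is an algebraically maximal Kaplansky field and hence defectless, so that $(L.K^*|K^*,v)$ is $vs$-defectless, and then descend to $K$ --- is close in spirit to the paper's, and everything up to and including the production of the normalized set $\{b_1^*,\ldots,b_n^*\}$ is sound. The gap is in the descent. You invoke Theorem~\ref{charact_immediate} to choose $d_{ij}\in K$ with $v(c_{ij}-d_{ij})>v(b_i^*)-v(v_j)$, but immediacy of $(K^*|K,v)$ only says that $v(c_{ij}-K)$ has \emph{no maximal element}; it does not say that this set is cofinal in $vK$ or that it exceeds any prescribed bound. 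The threshold $v(b_i^*)-v(v_j)$ you need to beat can strictly exceed $v(c_{ij})$, because $v(b_i^*)=v\bigl(\sum_j c_{ij}v_j\bigr)$ is in general strictly larger than $\min_j v(c_{ij}v_j)$ --- indeed the whole point of passing to a valuation basis is that the $b_i^*$ absorb the cancellation among the $v_j$, which are typically $K^*$-valuation \emph{dependent}. In a defect situation the sets $v(c-K)$ are typically bounded above in $vK$ (this is exactly what happens for limits of pseudo Cauchy sequences of algebraic type with no limit in $K$), so no $d_{ij}$ of the required precision need exist. This is precisely the obstruction that forces the proof of Theorem~\ref{thm:criterion} to approximate the elements $u'_i$ inside $L$, where only the one-step estimate $v(u'_i-w'_i)>v(u'_i)$ is needed, and then pull the coefficients down to $K$ by a saturation/elementarity argument rather than by coefficientwise approximation.

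The paper's own proof sidesteps approximation entirely. It reduces to a finite subextension $(E|K,v)$, takes the maximal immediate extension $(M_E,v)$ of $(E,v)$, and uses Theorem~\ref{subf_max_imm} (which is special to Kaplansky fields and is the key input you do not use) to find a maximal immediate extension $M_K$ of $K$ \emph{inside} $M_E$. Then $(M_K.E|E,v)$ is immediate because it sits inside $(M_E|E,v)$, so $(vM_K.E:vM_K)[(M_K.E)v:M_Kv]=(vE:vK)[Ev:Kv]$, while linear disjointness from $(C)$ together with Baur's theorem and Proposition~\ref{char_defectless} gives $[E:K]=[M_K.E:M_K]=(vM_K.E:vM_K)[(M_K.E)v:M_Kv]$; combining with the fundamental inequality yields $[E:K]=(vE:vK)[Ev:Kv]$ and Proposition~\ref{char_defectless} concludes. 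To repair your argument along your own lines you would have to prove that $(V^*|V,v)$ is an immediate extension of valued $K$-vector spaces (or find some other substitute for the coefficientwise approximation), and that does not follow from the immediacy of $(K^*|K,v)$ alone.
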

\begin{proof}
Assume that $L|K$ is linearly disjoint from any immediate extension $F|K$. We wish to show that $(L|K,v)$ is $vs$-defectless. 
Note that every finitely generated $K$-vector subspace of $L$ is contained in some finite field subextension $E$ of $L|K$. Hence, by Lemma~\ref{lem:val_bas_subext} it is enough to show that every finite field subextension $E|K$ of $L|K$ is $vs$-defectless. 

Take a finite subextension $(E|K,v)$ of $(L|K,v)$. Since $(E,v)$ is an algebraic extension of a Kaplansky field, it is also a Kaplansky field. Take $(M_E,v)$ to be the maximal immediate extension of $(E,v)$. By Theorem~\ref{subf_max_imm}, the field $M_E$ contains a maximal immediate extension $M_K$ of $K$. Thus it contains also $M_K.E$. 
By our assumptions, $M_K$ and $E$ are linearly disjoint over $K$. 
Moreover, as the field $(M_K,v)$ is maximal, Theorem~\ref{thm:baur} together with Proposition~\ref{char_defectless} yields that $[M_K.E:M_K]=(vM_K.E:vM_K)[(M_K.E)v:M_Kv]$. We thus obtain that
\begin{equation}\label{fund_in_1} \stepcounter{eqn}\tag{E\arabic{eqn}} 
(vE:vK)[Ev:Kv]\leqslant [E:K]=[M_K.E:M_K]=(vM_K.E:vM_K)[(M_K.E)v:M_Kv].
\end{equation} 
Since $(M_E|E,v)$ is immediate, the same holds for $(M_K.E|E,v)$. Thus 
\begin{equation} \label{fund_in_2} \stepcounter{eqn}\tag{E\arabic{eqn}}
(vM_K.E:vM_K)[(M_K.E)v:M_Kv] =  (vE:vK)[Ev:Kv].
\end{equation}Equations~(\ref{fund_in_1}) and~(\ref{fund_in_2}) yield that $[E:K]=(vE:vK)[Ev:Kv]$. By Proposition~\ref{char_defectless} we obtain that the extension $(E|K,v)$ is $vs$-defectless.
\end{proof}

Note that Theorem~\ref{thm:criterion} states that implication $(C)\Rightarrow (A)$ holds in particular for any extension $(L,v)$ of an algebraically maximal Kaplansky field $(K,v)$. The above fact shows that we can omit the assumption of being algebraically maximal in the case of algebraic extensions.


\section*{Appendix}\label{sec:appendix}

Given a totally ordered set $S$, we denote by $S_{\infty}$ the set $S$ together with a new element $\infty$ such that $s<\infty$ for all $s\in S$. 

\begin{definition}\label{def:vvs} Let $(K,v)$ be a valued field and $S$ be a totally ordered set. A valued $(K,v)$-vector space $W$ is a $K$-vector space together with a map $\theta\colon W\to S_{\infty}$ and an action of $vK$ on $S_{\infty}$ satisfying 
\begin{enumerate}
\item $\theta(x)=\infty$ if and only $x=0$ for all $x\in W$
\item $\theta(x+y)\geqslant \min\{\theta(x),\theta(y)\}$ for all $x,y\in W$
\item $\theta(ax)=v(a)(\theta(x))$ for all $a\in K^\times$ and all $x\in W$
\item for all $\gamma\in vK$ and $s_1,s_2\in S$, if $s_1<s_2$ then $\gamma(s_1)<\gamma(s_2)$. 
\item for all $\gamma_1,\gamma_2\in vK$ and $s\in S$, if $\gamma_1<\gamma_2$ then $\gamma_1(s)<\gamma_2(s)$. 
\item $v(a)(\infty)=\infty$ for all $a\in K^\times$. 
\end{enumerate}
\end{definition}  

The definitions of a valued vector space as given in \cite{fuchs1975,KS} correspond to the special case of Definition \ref{def:vvs} where $v$ is the trivial valuation on $K$ and the action of $vK$ on $S$ is also trivial, that is, $\theta(ax)=w(x)$ for all $a\in K^\times$ and all $x\in W$. More general frameworks can also be found in \cite{maalouf}. 

In this article we have worked in the special situation where the valued~$(K,v)$-vector spaces come from a valued field extension. Let~$(L|K,v)$ be any such extension. Any~$K$-vector space~$W\subseteq L$ can be endowed with the structure of a valued~$(K,v)$-vector space by taking $\theta=v$, $S=vW$ and the action of~$vK$ on~$S_{\infty}$ as addition, i.e., $v(a)(v(x))=v(a)+v(x)$ for all~$a\in K^\times$ and~$x\in W$. 

\subsection*{Acknowledgements}
During the realization of this project the second author was supported by ERC grant agreement nr. 637027 (TOSSIBERG). The third author was supported by OPUS research grant 2017/25/B/ST1/01815. 

\bibliographystyle{acm}
\bibliography{biblio_defectless}


\end{document}